\newcolumntype{Y}{>{\centering\arraybackslash}X}
\setlist{itemsep=1mm, topsep=2mm, leftmargin=10mm}
\newcommand{\IF}{{\mathbb{F}}}
\newcommand{\IZ}{{\mathbb{Z}}}
\newcommand{\cO}{{\mathcal{O}}}
\newcommand{\Iso}{\textup{MetaFrob}}
\newcommand{\propernormalsubgroup}{%
\mathrel{\ooalign{$\lneq$\cr\raise.22ex\hbox{$\lhd$}\cr}}}
\DeclareMathOperator{\Aut}{Aut}                  %      Automorphismes
\DeclareMathOperator{\Syl}{Syl}                  %       Sylow
\DeclareMathOperator{\Res}{Res}               %       Restriction
\DeclareMathOperator{\Ind}{Ind}                  %       Induction
\DeclareMathOperator{\Inf}{Inf}                    %       Inflation
\DeclareMathOperator{\tr}{tr}			%         transgression map
\DeclareMathOperator{\Ker}{ker}
\DeclareMathOperator{\Irr}{Irr}
\DeclareMathOperator{\IBr}{IBr}
\DeclareMathOperator{\Lin}{Lin}
\DeclareMathOperator{\TS}{TS}
\DeclareMathOperator{\Triv}{Triv}
\DeclareMathOperator{\GL}{\operatorname{GL}}
\DeclareMathOperator{\SL}{\operatorname{SL}}
\DeclareMathOperator{\PSL}{\operatorname{PSL}}
\DeclareMathOperator{\AGL}{\operatorname{AGL}}
\newcommand{\TrivialGroup}{\{1\}}
\newcommand{\decmat}{\textup{Dec}_p}
\let\lra=\longrightarrow
\newtheoremstyle{thmnew}{3ex}{3ex}{\itshape}{}{\bf}{.}{.5em}{}
\theoremstyle{thmnew}
\newtheorem{thm}{Theorem}[section]
\newtheorem{lem}[thm]{Lemma}
\newtheorem{prop}[thm]{Proposition}
\newtheoremstyle{defnew}{3ex}{3ex}{}{}{\bf}{.}{.5em}{}
\theoremstyle{defnew}
\newtheorem{exmp}[thm]{Example}
\newtheorem{conv}[thm]{Convention}
\newtheorem{nota}[thm]{Notation}
\theoremstyle{remark}
\newtheorem{rem}[thm]{Remark}
\begin{document}

%--------------------------------------------------------------------------------------------------------
%------------------------------------------ TITLE MATTERS -------------------------------------
%--------------------------------------------------------------------------------------------------------
	
        \title[Trivial source character tables of Frobenius groups of type $(C_p\times C_p)\rtimes H$]{Trivial source character tables of Frobenius groups of type $(C_p\times C_p)\rtimes H$}

        \dedicatory{Dedicated to the memory of Richard Parker}
	
        \author{{Bernhard B\"ohmler and Caroline Lassueur}}
        \address{{\sc Bernhard B\"ohmler},  Leibniz Universit\"at Hannover, Institut f\"ur Algebra, Zahlentheorie und Diskrete Mathematik, Welfengarten 1, 30167 Hannover, Germany}
        \email{boehmler@math.uni-hannover.de}
        \address{{\sc Caroline Lassueur},  Leibniz Universit\"at Hannover, Institut f\"ur Algebra, Zahlentheorie und Diskrete Mathematik, Welfengarten 1, 30167 Hannover, Germany}
        \email{lassueur@math.uni-hannover.de,lassueur@mathematik.uni-kl.de}

        \subjclass[2020]{Primary 20C15, 20C20. Secondary 19A22, 20C05}
        \keywords{Frobenius group, trivial source modules, $p$-permutation modules, trivial source character tables, species tables, ordinary character theory, decomposition matrices, simple modules, projective indecomposable modules}
        \date{\today}

        \begin{abstract}
        Let $p$ be a prime number. We compute the trivial source character tables %$\Triv_p(G)$ 
        %(also called species tables of the trivial source ring) 
        of finite Frobenius groups $G$ with an abelian Frobenius complement $H$ and
        an elementary abelian Frobenius kernel of order~$p^2$. More precisely, we deal with infinite families of such groups which occur in the two extremal cases for the fusion of $p$-subgroups: the case in which there exists exactly one $G$-conjugacy class of non-trivial cyclic $p$-subgroups, and the case in which there exist exactly~$p+1$ distinct $G$-conjugacy classes of non-trivial cyclic $p$-subgroups.
	\end{abstract}

	\thanks{
		The authors gratefully acknowledge financial support by the DFG-SFB/TRR195.
	}
	
	\maketitle

%--------------------------------------------------------------------------------------------------------
%------------------------------------------ Heading of pages--------------------------------------
%-------------------------------------------------------------------------------------------------------
	
\pagestyle{myheadings}
\markboth{B. B\"ohmler and C. Lassueur}{Trivial source character tables of Frobenius groups of type $(C_p\times C_p)\rtimes H$}

%--------------------------------------------------------------------------------------------------------
%------------------------------------------ INTRODUCTION--------------------------------------
%-------------------------------------------------------------------------------------------------------
\section{Introduction}
    Let $G$ be a finite group. Let $p$ be a prime number dividing the order of $G$ and let $k$ be a large enough field of characteristic $p$. Permutation $kG$-modules and their direct summands -- called \emph{$p$-permutation modules} or also \emph{trivial source modules} -- are omnipresent in the modular representation theory of finite groups. They are, for example, elementary building blocks for the construction and for the understanding of different categorical equivalences between block algebras, such as splendid Rickard equivalences, $p$-permutation equivalences, source-algebra equivalences, or Morita equivalences with endo-permutation source. A deep understanding of the structure of these modules is therefore essential.
    \par
    In this manuscript, we go back to ideas of Benson and Parker developed in~\cite{BP}.  
    Any trivial source $kG$-module can be lifted to characteristic zero and affords a well-defined ordinary character, which contains essential information about its structure. 
    The \emph{trivial source character table $\Triv_p(G)$ of $G$ at the prime $p$} collects this information in a table; it is the \emph{species table} or \emph{representation table} of the trivial source ring in the sense of~\cite{BP,BensonBookOld,BensonBookI}.  More precisely, it provides us with information about the character values of all the indecomposable trivial source $kG$-modules and their Brauer quotients at all $p'$-conjugacy classes. See  Subsection~\ref{ssec:DefTSCT}
    %\cref{ssec:DefTSCT}  DOES NOT WORK WITH SUBSECTIONS?
    for a precise definition.
    \par
    The present article is in fact part of a program aiming at gathering information about trivial source modules of small finite groups and their associated \emph{trivial source character tables} in a database~\cite{DatabaseTSCTs}. Isolated examples --  calculated by Benson, and Lux and Pahlings --  can be found in ~\cite[Appendix]{BensonBookOld} and~\cite[\S4.10]{LuxPah}.  
    More recently, the first author, as part of his doctoral thesis~\cite{BBthesis}, developed  \textsf{GAP4}~\cite{gap} and \textsf{MAGMA}~\cite{magma}  algorithms, which could be used to compute the trivial source character tables of finite groups of order less than 100, as well as the  trivial source character tables of various small (non-abelian) quasi-simple groups. The latter algorithms rely, in particular, on the MeatAxe algorithm, first introduced by R. Parker. 
    Meanwhile, in~\cite{BBCLNF} and~\cite{FarLas23} the authors and Farrell computed \emph{generic} trivial source character tables for the groups $\SL_2(q)$ and $\PSL_2(q)$ in cross characteristic, using the generic character tables of these groups and theoretical methods involving block theory. 
    \par
    Using the data produced in \cite{DatabaseTSCTs} we identified some interesting families of finite groups, of which we can calculate the trivial source character tables from a purely theoretical point of view. In this regard, the main results of this article consist in the calculation of the trivial source character tables at $p$ of the following infinite families of  Frobenius groups with an abelian Frobenius complement $H$ and elementary abelian Frobenius kernel of rank $2$:
        \begin{enumerate}[label=(\Roman*),itemsep=1ex]
             \item the family of all metabelian Frobenius groups of type $(C_p\times C_p)\rtimes H$, in which there is precisely one conjugacy class of subgroups of order $p$\,; 
             \item the family of all metabelian Frobenius groups of type $(C_p\times C_p)\rtimes H$ in which there are precisely $p+1$ conjugacy classes of subgroups of order $p$\,. 
        \end{enumerate}
    We note that the unique group of type (I) with $|H|=p^2-1$ is $\AGL_1(p^2)$\,.
    %, and any other Frobenius group of type  $(C_p\times C_p)\rtimes C_{m}$ in which there is precisely one conjugacy class of subgroups of order $p$ is a Frobenius subgroup of $\AGL_1(p^2)$.
    Furthermore, if~$p=2$, then the only group of type (I) is the alternating group $\mathfrak{A}_4$, while groups of type (II) only occur for odd prime numbers $p$. For this reason, we exclude the prime number $2$ from all our calculations in this manuscript. 
    We also emphasise that contrary to ~\cite{BBCLNF,FarLas23}, it is not possible to use block theoretical arguments in the present cases, because such groups possess only one $p$-block.
    \par
    The paper is structured as follows.  In~\cref{sec:notanddefs} we introduce our notation and conventions. In~\cref{sec:Frobenius_prelim}, first we review some properties of Frobenius groups and their ordinary and Brauer characters. Then,  we characterise metabelian Frobenius groups with elementary abelian Frobenius kernel. The trivial source character tables are calculated in~\cref{sec:MaximalFusion} for groups of type~(I), respectively in~\cref{sec:NO_FUSION} for groups of type~(II).
\par

        %%%--------------------------------------------------------------------------------------------------------
%%%-----------------------                       Section                    ------------------------------
%%%-------------------------------------------------------------------------------------------------------

\vspace{6mm}

\section{Preliminaries}
\label{sec:notanddefs}

\vspace{2mm}
\subsection{General notation}
    Throughout, unless otherwise stated, we adopt the notation and conventions below. We let~$p$ denote an odd prime number and  $G$ a finite group of order divisible by~$p$. 
    We let~$(K,\cO,k)$ be a $p$-modular system, where~$\cO$ denotes a complete discrete valuation ring of characteristic zero with field of fractions~$K=\text{Frac}(\cO)$ and  residue field $k=\cO/J(\cO)$ of characteristic~$p$. Following \cite[\S17A]{CurtisReinerMethods1}, we assume that $K$ is \emph{sufficiently large (relative to~$G$)}, i.e. $K$ contains all $\exp(G)$-th roots of unity. Then $k$ is also sufficiently large (relative to $G$) and $K$ and $k$ are splitting fields for $G$ and all of its subgroups. 
    For~$R\in\{\cO,k\}$, $RG$-modules are assumed to be finitely generated left $RG$-lattices, that is, free as~$R$-modules, and  we let  $R$  denote the trivial $RG$-lattice. 
    \par
    Given a positive integer~$n$, we let $C_{n}$ denote the cyclic group of order~$n$. We let $\mathbf{O}_p(G)$ denote the largest normal~$p$-subgroup of $G$, $\textup{Syl}_p(G)$ denote the set of all Sylow $p$-subgroups of $G$,  $ccls(G)$ denote a set of representatives for the conjugacy classes of $G$,  $[G]_{p^\prime}$ denote a set of representatives for the $p$-regular conjugacy classes of $G$, and we let $G_{p'}:=\{g\in G\mid p\nmid o(g)\}$. 
    We recall that a group $G$ with a normal subgroup $N$ and a subgroup $H$ is said to be the \emph{internal semi-direct product of $N$ by $H$}, written~$G=N\rtimes H$, provided $G=NH$ and $N\cap H=\{1\}$.
    \par
    Given $H\leq G$, an ordinary character $\psi$ of $H$ and $\chi$ an ordinary character of $G$, we write 
    $\Ind_H^G(\psi)$ for the induction of $\psi$ from $H$ to $G$, 
    $\Res^G_H(\chi)$ for the restriction of $\chi$ from $G$ to $H$, $\chi^\circ:=\chi|_{G_{p'}}$ for the reduction modulo $p$ of $\chi$, and $1_H$ for the trivial character of $H$. Given $N\unlhd G$ and an ordinary character~$\nu$ of $G/N$, we write $\Inf_{G/N}^G(\nu)$ for the inflation of $\nu$ from $G/N$ to $G$. Similarly, we write~$\Ind_H^G(L)$ for the induction of the~$kH$-module $L$ from $H$ to $G$, $\Res^G_H(M)$ for the restriction of the $kG$-module $M$ from~$G$ to $H$, and~$\Inf_{G/N}^G(U)$ for the inflation of the $k[G/N]$-module $U$ from $G/N$ to $G$. Moreover, if~$M$ is a~$kG$-module, then we denote by $\varphi_{M}$ the Brauer character afforded by $M$,  and if $Q\leq G$ then the Brauer quotient~(or Brauer construction) of $M$ at $Q$ is the $k$-vector space $M[Q]:=M^{Q}\big/ \sum_{R<Q}\tr_{R}^{Q}(M^{R})$, where~$M^{Q}$ denotes the fixed points of $M$ under $Q$ and $\tr_{R}^{Q}$ denotes  the relative trace map. This vector space has a natural structure of a $kN_{G}(Q)$-module, but also of a $kN_{G}(Q)/Q$-module, and is equal to zero if $Q$ is not a $p$-subgroup. Moreover, we use the abbreviation PIM to mean a \emph{projective indecomposable module} and we  denote by $\Irr(kG)$ the set of all simple $kG$-modules, considered up to isomorphism. 
    We assume that the reader is familiar with elementary notions of ordinary and modular representation theory of finite groups. We refer to \cite{LinckBook, Webb, NagaoTsushima, HuppertCharacterTheory, CurtisReinerMethods1} for further standard notation and background results.

\vspace{2mm}
\subsection{Character tables and decomposition matrices}\label{ssec:NotaCharTables}
 We let $\Irr(G)$, $\Lin(G)$, and $\IBr_p(G)$ denote the set of all irreducible $K$-characters of~$G$, the set of all linear characters of $G$, and the set of all irreducible~$p$-Brauer characters of $G$, respectively.
 We let
    \[    X(G):=\Big(\chi(g)\Big)_{\substack{\chi\in\Irr(G)\\g\in ccls(G)}}\in K^{|\Irr(G)|\times|ccls(G)|}
    \]
    denote the ordinary character table of $G$ and we let
    \[
    X(G,p'):=\Big(\chi(g)\Big)_{\substack{\chi\in\Irr(G)\\g\in [G]_{p'}}}\in K^{|\Irr(G)|\times|[G]_{{p'}}|}
    \]
    denote the matrix  obtained from $X(G)$ by removing the columns labelled by $p$-singular conjugacy classes. Moreover, we always assume that the first column of these matrices is labelled by the class of $1$.
    We recall that for any $\chi\in\Irr(G)$ there exist uniquely determined non-negative integers~$d_{\chi\varphi}$ such that $\chi^\circ=\sum_{\varphi\in\IBr_p(G)}d_{\chi\varphi}\varphi$\,.
    Then, for any ${\varphi\in\IBr_p(G)}$,  the \emph{projective indecomposable character} associated to $\varphi$ is 
     \begin{equation}
     \Phi_\varphi:=\sum_{\chi\in\Irr(G)}d_{\chi\varphi}\chi\,.
     \end{equation}
    The \emph{$p$-decomposition matrix} of $G$ is then
     \[
     \decmat(G)  :=\Big(d_{\chi\varphi}\Big)_{\substack{\chi\in\Irr(G)\\\varphi\in\IBr_p(G)}} \in K^{|\Irr(G)|\times|\IBr_p(G)|}
     \]
    and the \emph{$p$-projective table} of $G$ is 
    \[
    \Phi_p(G):=\Big(  \Phi_\varphi(x)  \Big)_{\substack{\varphi\in\IBr_p(G)\\x\in [G]_{p'}}} \in K^{|\IBr_p(G)|\times [G]_{p'}}\,,
    \]
 which is the table of Brauer character values of the projective indecomposable $kG$-modules. 
   It follows from the definitions that
   \begin{equation}\label{eq:ProjTable}
   \Phi_p(G)=\decmat(G)^{t}\cdot X(G,p') \,.
   \end{equation} 
  \bigskip

   \noindent Finally, the character tables of finite cyclic groups will play an essential role in our calculations, hence in this case we fix the following labelling of the irreducible characters and conjugacy classes.
   
 \enlargethispage{1cm}  
 
\begin{nota}\label{nota:ordinary_ct_of_cyclic_group}
If $G:= \langle x\ |\ x^m=1\rangle \cong C_m$ is a cyclic group of order $m\geq 1$, then we let $\zeta \in K$ denote a primitive $m$-th root of unity and we write the set of ordinary irreducible characters of~$G$ as~$\Irr(G)=\{\xi_1, \ldots, \xi_m\}$, where
$$\xi_{a}(x^j):= \zeta^{(a-1)j}$$
for each $1\leq a \leq m$ and each $0\leq j\leq m-1$. This yields
$$X(C_{m}) := \Big( \xi_a(x^{j-1})  \Big)_{\substack{1\leq a\leq m\\ 1\leq j \leq m}} = \Big( \zeta^{(a-1)(j-1)}  \Big)_{\substack{1\leq a\leq m\\ 1\leq j \leq m}}\,.$$
\end{nota}

\vspace{2mm}
\subsection{Trivial source character tables}\label{ssec:DefTSCT}
Given $R\in\{\cO,k\}$, an $RG$-lattice $M$ is called a \emph{trivial source} $RG$-lattice if it is isomorphic to an indecomposable\footnote{We emphasise here that some authors use the terminology \emph{trivial source module} to mean a finite direct sum of indecomposable $kG$-modules with a trivial source. We always assume such modules to be indecomposable.}  direct summand of an induced lattice $\Ind_{Q}^{G}(R)$, where $Q\leq G$ is a $p$-subgroup. In addition, if $Q$ is of minimal order subject to this property, then $Q$ is a vertex of~$M$.  
It is clear that, up to isomorphism, there are only finitely many trivial source $RG$-lattices.%(see e.g.~\cite[Proposition 2.2(d)]{BBCLNF}) 
\par
It is well-known that any trivial source $kG$-module $M$ lifts in a unique way to a trivial source $\cO G$-lattice $\widehat{M}$ (see e.g.~\cite[Corollary 3.11.4]{BensonBookI}) and we denote by $\chi^{}_{\widehat{M}}$ the $K$-character afforded by $\widehat{M}$. 
Moreover, if $\varphi^{}_{M}$ denotes the Brauer character afforded by $M$, then $(\chi_{\widehat{M}})^\circ=\varphi^{}_M$ (see e.g. \cite[Proposition 5.13.6]{Linckelmann1}).
If~$M$ is a PIM, then $\chi^{}_{\widehat{M}}=\Phi_\varphi$, where $\varphi$ is the Brauer character afforded by the unique simple $kG$-module in the socle of $M$.
\par
We will study trivial source modules vertex by vertex.  Hence, we denote by $\TS(G;Q)$ the set of isomorphism classes of trivial source $kG$-modules with vertex $Q$. We notice that  $\TS(G;\{1\})$ is precisely  the set of isomorphism classes of PIMs of $kG$. 
\par
A $p$-subgroup $Q\leq G$ is a vertex of a trivial source $kG$-module $M$  if and only if $M[Q]$ is a non-zero projective $k\overline{N}_{G}(Q)$-module.
Moreover, if this is the case, then the $kN_{G}(Q)$-Green correspondent $f(M)$ of $M$ is $M[Q]$ (viewed as a $kN_{G}(Q)$-module). Thus, there are \smallskip bijections
\begin{center}
	\begin{tabular}{ccccc}
		$\TS(G;Q)$      & $\overset{\sim}{\longrightarrow}$ &    $\TS(N_{G}(Q);Q)$   &  $\overset{\sim}{\longrightarrow}$ & $\TS(\overline{N}_G(Q);\{1\})$  \\
		$M$      & $\mapsto$     &   $f(M)$      &  $\mapsto$     & $M[Q]$
	\end{tabular}
\end{center}
where the inverse of the second map is given by the inflation from $\overline{N}_G(Q):=N_G(Q)/Q$ to $N_{G}(Q)$. 
\noindent These sets are also in bijection with the set of $p^\prime$-conjugacy classes of $\overline{N}_G(Q)$. 
\par

Next, we let $a(kG,\Triv)$ denote the \emph{trivial source ring} of $kG$, which is defined to be the subring of the Green ring of~$kG$ generated by the set of all isomorphism classes of trivial source $kG$-modules. Notice that this ring is finitely generated. By definition, the \emph{trivial source character table of the group $G$ at  the prime $p$}, denoted $\Triv_{p}(G)$,  is the species table of the trivial source ring of $kG$. See e.g.~\cite{BP}. 
However, 
%as in~\cite{BBCLNF}, 
we follow~\cite[Section 4.10]{LuxPah} and consider~$\Triv_{p}(G)$ as the block square matrix defined according to the following convention.

\enlargethispage{1cm}

\begin{conv}\label{conv:tsctbl}%
First, fix  a set of representatives $Q_1,\ldots, Q_r$ ($r\in\IZ_{\geq 1}$) for the conjugacy classes of~$p$-subgroups of $G$ where $Q_{1}:=\{1\}$, $Q_{r}\in\Syl_{p}(G)$ and $|Q_1|\leq \ldots \leq |Q_r|$. For each $1\leq v\leq r$ set~$\overline{N}_{G}(Q_v):=N_{G}(Q_{v})/Q_{v}$.  
	For each pair $(Q_{v},s)$ with $1\leq v\leq r$ and $s\in [\overline{N}_{G}(Q_v)]_{p^\prime}$ there is a ring homomorphism  
	\begin{center}
		\begin{tabular}{cccl}
			$\tau_{Q_{v},s}^{G}$\,:            &   $a(kG,\mbox{Triv})$      & $\lra$ &    $K$     \\
			&   $[M]$      & $\mapsto$     &   $\varphi^{}_{M[Q_{v}]}(s)$   
		\end{tabular}
	\end{center}
	mapping the class of a trivial source $kG$-module $M$ to the value at~$s$ of the Brauer character~$\varphi^{}_{M[Q_{v}]}$ of the Brauer quotient $M[Q_{v}]$. 
	(Note  that the group $G$ acts by conjugation on the pairs $(Q_{v},s)$ and the values of $\tau_{Q_{v},s}^{G}$ do not depend on the choice of  $(Q_{v},s)$ in its $G$-orbit.)
	Then, for each $1\leq i,v\leq r$ define a matrix
	$$T_{i,v}:=\Big( \tau_{Q_{v},s}^{G}([M])\Big)_{\substack{M\in \TS(G;Q_{i})\\s\in [\overline{N}_{G}(Q_v)]_{p^\prime}}}\,.$$
	The \emph{trivial source character table of  $G$ at the prime $p$} is then the block matrix 
	$$\Triv_{p}(G):=\Big[T_{i,v}\Big]_{\substack{1\leq i\leq r\\1\leq v\leq r}}\,.$$
    For convenience, we will label the columns of $\Triv_p(G)$ by representatives of the $p^\prime$-elements of $\overline{N}_G(Q_v)$ in $N_G(Q_v)$ ($1\leq v\leq r$). This is possible e.g. by \cite[Lemma 3.1.12]{BBthesis}. Moreover, we label the rows of $\Triv_{p}(G)$  with the ordinary characters $\chi^{}_{\widehat{M}}$ instead of the isomorphism classes of  trivial source $kG$-modules $M$ themselves.
\end{conv}

\noindent In order to calculate the entries of $\Triv_{p}(G)$, we use the following two well-known lemmata.
\noindent The first one lets us describe certain blocks of the trivial source character table using ordinary and Brauer characters. The second lemma characterises trivial source modules with maximal vertices when a Sylow $p$-subgroup is normal.

\begin{lem}\label{lem:PropTrivpG}
  Let $\Triv_{p}(G)=[T_{i,v}]_{1\leq i,v\leq r}$ be the trivial source character table of the finite group~$G$ at $p$. Then, the following assertions hold:
    \begin{enumerate}[label=\rm(\alph{enumi}),itemsep=1ex]
        \item $T_{i,v}=\mathbf{0}$ if $Q_v\not\leq_G Q_i$, so in particular $T_{i,v}=\mathbf{0}$ for every $1\leq i<v\leq r$\,;
        \item  $T_{i,i}= \Phi_p(\overline{N}_G(Q_i))= \decmat(\overline{N}_G(Q_i))^{t}\cdot X(\overline{N}_G(Q_i),p')$ for every $1\leq i\leq r$\,;
        \item  $T_{i,1}=\big( \chi^{}_{\widehat{M}}(s)\big)_{M\in \TS(G;Q_{i}), s\in [G]_{p^\prime} }$ for every $1\leq i\leq r$\,.
     \end{enumerate}
\end{lem}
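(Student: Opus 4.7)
My plan is to establish parts (a), (b), and (c) in turn by applying standard facts about Brauer quotients together with the bijections and characterisations recalled in \cref{ssec:DefTSCT} and equation~\eqref{eq:ProjTable}; essentially the proof amounts to unfolding the definitions in the right order, and no genuine calculation is required.

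For part~(a), the basic fact I would invoke is that the Brauer quotient $M[Q]$ of an indecomposable trivial source $kG$-module $M$ vanishes whenever $Q$ is not subconjugate in $G$ to a vertex of $M$. Applied to $M\in\TS(G;Q_i)$, this yields $M[Q_v]=0$ whenever $Q_v\not\leq_G Q_i$, and hence $\tau^G_{Q_v,s}([M])=\varphi^{}_{M[Q_v]}(s)=0$. The ordering convention $|Q_1|\leq\ldots\leq|Q_r|$, together with the fact that $Q_1,\ldots,Q_r$ are pairwise non-conjugate, then forces $Q_v\not\leq_G Q_i$ for every $1\leq i<v\leq r$, so $T_{i,v}=\mathbf{0}$ in particular.

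For part~(b), the key input is the bijection $\TS(G;Q_i)\xrightarrow{\sim}\TS(\overline{N}_G(Q_i);\{1\})$, $M\mapsto M[Q_i]$, from \cref{ssec:DefTSCT}, whose target set is precisely the set of PIMs of $k\overline{N}_G(Q_i)$. Thus each row of $T_{i,i}$ records the Brauer character values, at the $p'$-classes of $\overline{N}_G(Q_i)$, of a PIM of $k\overline{N}_G(Q_i)$. Since the lift of such a PIM affords a projective indecomposable character $\Phi_\varphi$ and $(\chi^{}_{\widehat{M[Q_i]}})^\circ=\varphi^{}_{M[Q_i]}$, the matrix $T_{i,i}$ coincides with the projective table $\Phi_p(\overline{N}_G(Q_i))$. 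The second equality is then equation~\eqref{eq:ProjTable} applied to $\overline{N}_G(Q_i)$.

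For part~(c), I note that $Q_1=\{1\}$ and the definition of the Brauer construction immediately gives $M[\{1\}]=M^{\{1\}}=M$, as the sum $\sum_{R<\{1\}}\tr_R^{\{1\}}(M^R)$ is empty. Therefore $\tau^G_{\{1\},s}([M])=\varphi^{}_M(s)$, and since $s$ is a $p'$-element and $(\chi^{}_{\widehat{M}})^\circ=\varphi^{}_M$, this evaluates to $\chi^{}_{\widehat{M}}(s)$, as claimed. The only real subtlety in the whole argument lies in part~(b), namely in correctly tracking how the bijection of \cref{ssec:DefTSCT} combined with the definition of $\Phi_\varphi$ converts Brauer-quotient data into the standard projective table.
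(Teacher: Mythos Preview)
Your proposal is correct and follows essentially the same approach as the paper's proof. The only difference is presentational: the paper delegates parts~(a) and the first equality of~(b) to \cite[Lemma~4.10.11]{LuxPah}, whereas you spell out the underlying arguments (vanishing of the Brauer quotient outside vertices, and the bijection $M\mapsto M[Q_i]$ onto PIMs of $k\overline{N}_G(Q_i)$) directly; your proof of~(c) is virtually identical to the paper's.
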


\begin{proof}
Assertion (a) is given by \cite[Lemma 4.10.11(b)]{LuxPah}. The first equality in assertion (b) is given by \cite[Lemma 4.10.11(c)]{LuxPah} and the second equality follows from Equation~(\ref{eq:ProjTable}) above. %Warning: \cref prints Eq.
Now, if  $v=1$ and $1\leq i\leq r$, then $M[Q_{v}]=M[\{1\}]=M$, so $\tau_{\{1\},s}^{G}([M])=\varphi^{}_{M}(s)=\chi^{}_{\widehat{M}}(s)$  for every $M\in \TS(G;Q_{i})$ and every $s\in [G]_{p'}$, proving assertion~(c). 
\end{proof}

\begin{lem}\label{lem:Simples_G_mod_Op(G)}%
Assume $G$ is a finite group with a normal Sylow $p$-subgroup $P\unlhd G$ such that $G/P$ is an abelian $p'$-group. Then, the following assertions hold:
    \begin{enumerate}[label={\rm(\alph*)},itemsep=1ex]
        \item $\TS(G;P)=\Irr(kG)=\{\Inf_{G/P}^{G}(S)\mid S\in \Irr(k[G/P])\}$, which is the set of all {$1$-dimensional} $kG$-modules (considered up to isomorphism); 
         \item  $\{\chi^{}_{\widehat{M}}\mid M\in\TS(G;P)\}=\Inf_{G/P}^{G}(\Irr(G/P))\subseteq \Lin(G)$\,.
    \end{enumerate}
\end{lem}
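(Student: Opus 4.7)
The plan is to reduce both parts to well-known facts about the quotient $G/P$, which is an abelian $p'$-group, by fully exploiting that $P$ is normal in $G$.

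For (a), I would first establish $\Irr(kG) = \{\Inf_{G/P}^G(\bar{S}) \mid \bar{S} \in \Irr(k[G/P])\}$: since $P = \mathbf{O}_p(G)$ is a normal $p$-subgroup, it lies in the kernel of every simple $kG$-module (the fixed-point subspace under $P$ on any nonzero $kG$-module is a nonzero $G$-stable subspace, by the standard $p$-group fixed-point argument in characteristic $p$), whence every simple $kG$-module is inflated from $G/P$. That each such simple is $1$-dimensional then follows from $G/P$ being abelian together with $k$ being a splitting field for $G/P$. For the equality $\TS(G;P) = \Irr(kG)$, I would invoke the chain of bijections recalled in Subsection~\ref{ssec:DefTSCT}: since $P \unlhd G$ we have $N_G(P)=G$ and $\overline{N}_G(P) = G/P$, so $\TS(G;P) \overset{\sim}{\longrightarrow} \TS(G/P;\{1\})$ with inverse given by inflation. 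Because $G/P$ is a $p'$-group, $k[G/P]$ is semisimple, so $\TS(G/P;\{1\})$ is precisely $\Irr(k[G/P])$ (PIMs coincide with simples). Inflating back then yields exactly the set of simples of $kG$.

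For (b), I would rely on the uniqueness of the trivial source lift. Given $M = \Inf_{G/P}^G(\bar{S}) \in \TS(G;P)$ with $\bar{S}$ a $1$-dimensional $k[G/P]$-module, its unique $1$-dimensional $\cO[G/P]$-lift $\widehat{\bar{S}}$ affords a linear character $\lambda \in \Lin(G/P) = \Irr(G/P)$. The $\cO G$-lattice $\Inf_{G/P}^G(\widehat{\bar{S}})$ is again a trivial source lattice (inflation preserves the trivial source property, since $\cO[G/P] \cong \Ind_P^G(\cO)$ as an $\cO G$-lattice, so $\widehat{\bar{S}}$ inflates to a summand of $\Ind_P^G(\cO)$) and it reduces to $M$ modulo $p$, hence by uniqueness of the lift it is isomorphic to $\widehat{M}$. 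This gives $\chi^{}_{\widehat{M}} = \Inf_{G/P}^G(\lambda) \in \Lin(G)$. Conversely, because $G/P$ is a $p'$-group, reduction modulo $p$ induces a bijection $\Irr(G/P) \to \Irr(k[G/P])$, so every $\Inf_{G/P}^G(\lambda)$ does arise in this way.

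The argument is essentially bookkeeping, and no real obstacle is present; the only point requiring a touch of care is verifying that inflation and the trivial source lift commute, which is precisely what allows the ordinary character values to be transported from $G/P$ up to $G$ in (b).
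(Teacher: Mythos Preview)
Your proposal is correct and follows essentially the same line as the paper's proof. For (a) the paper cites Clifford's theorem (in place of your fixed-point argument) and then invokes the bijections of Subsection~\ref{ssec:DefTSCT} exactly as you do; for (b) the paper gives a one-line appeal to the identity $(\chi^{}_{\widehat{M}})^\circ=\varphi^{}_M$ together with (a), which is just a terse version of your explicit inflate-and-lift verification.
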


\begin{proof}
\begin{enumerate}[label=(\alph*)]
    \item  Clearly  $P=\mathbf{O}_p(G)$.
    Thus, it follows from Clifford's theorem that $\Irr(kG)=\{\Inf_{G/P}^{G}(S)\mid S\in \Irr(k[G/P])\}$, which is precisely the set of all $1$-dimensional $kG$-modules as $G/P$ is an abelian $p^\prime$-group  (see e.g. \cite[Corollary 6.2.2]{Webb}). Now, $1$-dimensional $kG$-modules are trivial source modules with vertex $P$ since their restriction to $P$ must be trivial, thus, as $\overline{N}_G(P)=G/P$ is an abelian  $p^\prime$-group,  these already account for the whole of $\TS(G;P)$\,. (See Subsection~\ref{ssec:DefTSCT}.)
    \item The claim is clear from (a) and the fact that $(\chi^{}_{\widehat{M}})^\circ$ coincides with the Brauer character of $M$ for any $M\in\TS(G;P)$\,.
\end{enumerate}

\end{proof}

\enlargethispage{1cm}

\noindent We refer the reader to the survey \cite{LASpPermSuryey} and to our previous paper~\cite[\S 2]{BBCLNF} for further details and  further properties of trivial source modules and trivial source character tables.
However, we mention the following result from the thesis of the first author, which will be crucial. Inbetween, this result has also appeared in \cite[4.1 Theorem]{BoltjeMonteiro}.

\begin{prop}[{}{\cite[Proposition 3.1.15]{BBthesis}}]\label{Proposition1AbelianGroupsTSModules}
Assume $G$ is a finite group with a normal Sylow $p$-subgroup $P\unlhd G$ such that $G/P$ is abelian. Let $Q$ be a $p$-subgroup of $G$. Then, we have  $P\cap N_G(Q) = \mathbf{O}_p(N_G(Q))\in\textup{Syl}_p(N_G(Q))$ and by the Schur--Zassenhaus Theorem, we may choose a complement $C$ of $P\cap N_G(Q)$ in $N_G(Q)$. 
Let $S$ be a simple $kC$-module, viewed as a simple $k[QC/Q]$-module via the canonical isomorphism $QC/Q\cong C$. Set
        $$
        L:=\Ind_{QC/Q}^{\overline{N}_G(Q)}(S)
        \qquad\text{ and }\qquad 
        U:=\Inf_{\overline{N}_G(Q)}^{N_G(Q)}(L)\,.
        $$
Then, the following assertions hold:
	\begin{enumerate}[label=\rm(\alph{enumi}),itemsep=1ex]
            \item $L$ is a projective indecomposable $k\overline{N}_G(Q)$-module; and  
            \item $M:= \Ind_{N_G(Q)}^{G}(U)$ is indecomposable, hence a trivial source $kG$-module with vertex~$Q$.
	\end{enumerate}
In particular, any element of $\TS(G;Q)$ can be obtained in this way.
\end{prop}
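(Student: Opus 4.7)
First, I would observe that since $P \trianglelefteq G$ and $G/P$ is an abelian $p'$-group, the intersection $P \cap N_G(Q)$ is a normal $p$-subgroup of $N_G(Q)$, and the quotient $N_G(Q)/(P \cap N_G(Q))$ embeds into $G/P$; this forces $P \cap N_G(Q) = \mathbf{O}_p(N_G(Q)) \in \textup{Syl}_p(N_G(Q))$ and makes any Schur--Zassenhaus complement $C$ abelian of order prime to $p$. Since $Q$ is a $p$-subgroup of $N_G(Q)$ we have $Q \leq P \cap N_G(Q)$, so $Q \cap C = \{1\}$ and the composition $C \hookrightarrow N_G(Q) \twoheadrightarrow N_G(Q)/Q$ yields the canonical isomorphism $QC/Q \cong C$.

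For (a), setting $\overline{N} := \overline{N}_G(Q)$, $\overline{P} := (P \cap N_G(Q))/Q$, and $\overline{C} := QC/Q$, I have $\overline{N} = \overline{P} \rtimes \overline{C}$ with $\overline{P}$ the normal Sylow $p$-subgroup and $\overline{C}$ an abelian $p'$-complement. Since $\overline{C}$ is a $p'$-group, $S$ is projective as a $k\overline{C}$-module, so $L = \Ind_{\overline{C}}^{\overline{N}}(S)$ is a projective $k\overline{N}$-module. By~\cref{lem:Simples_G_mod_Op(G)}, every simple $k\overline{N}$-module is a $1$-dimensional inflation from $\overline{N}/\overline{P} \cong \overline{C}$, whose restriction to $\overline{C}$ is the corresponding simple $k\overline{C}$-module. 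Frobenius reciprocity then gives that the head of $L$ is precisely the simple corresponding to $S$ under this identification, whence $L$ is indecomposable, establishing~(a).

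For (b), the bijection $\TS(N_G(Q);Q) \overset{\sim}{\longleftrightarrow} \TS(\overline{N}_G(Q);\{1\})$ from~\cref{ssec:DefTSCT} shows that $U = \Inf_{\overline{N}}^{N_G(Q)}(L)$ is an indecomposable trivial source $kN_G(Q)$-module with vertex $Q$ and trivial source. Green correspondence then produces a unique indecomposable trivial source $kG$-module $M_0$ with vertex $Q$ and a decomposition $\Ind_{N_G(Q)}^G U = M_0 \oplus V$ in which every indecomposable summand of $V$ has vertex strictly smaller than $Q$; the hard part will be to show $V = 0$. My plan is first to rewrite $M = \Ind_{QC}^G(\tilde S)$ with $\tilde S := \Inf_{QC/Q}^{QC}(S)$ a $1$-dimensional $kQC$-module (via transitivity of induction and commutativity of induction with inflation along $Q \trianglelefteq QC$), and then to show $\End_{kG}(M)$ is local by analyzing the Mackey decomposition combined with Frobenius reciprocity, which yields
\[
\End_{kG}(M) \;\cong\; \bigoplus_{g \in QC\backslash G / QC} \Hom_{k(QC \cap {}^g QC)}\bigl(\Res\tilde S,\,\Res({}^g\tilde S)\bigr),
\]
whose $g = 1$ summand is a single copy of $k$; the abelianness of $G/P$ will be used to control how $G$-conjugation acts on the $1$-dimensional character $\tilde S$ and to force $\End_{kG}(M)/\mathrm{rad} \cong k$. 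Finally, letting $S$ run through the $|C|$ simple $kC$-modules will yield $|C|$ pairwise non-isomorphic indecomposable trivial source $kG$-modules with vertex $Q$; since $|\TS(G;Q)| = |[\overline{N}_G(Q)]_{p'}| = |\overline{C}|$, this exhausts $\TS(G;Q)$ and completes the last assertion.
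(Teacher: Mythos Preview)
The paper does not supply its own proof of this proposition; it is quoted from \cite[Proposition~3.1.15]{BBthesis} (and independently from \cite[4.1~Theorem]{BoltjeMonteiro}), so there is no argument in the paper to compare against. I can therefore only assess your proposal on its own merits. Your preliminary paragraph and your proof of~(a) are correct, as is the final counting argument for the ``in particular'' clause (conditional on~(b)).

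For~(b), however, there is a genuine gap. Your rewriting $M=\Ind_{QC}^{G}(\tilde S)$ and the Mackey--Frobenius description of $\End_{kG}(M)$ are correct, but the crucial step --- that $\End_{kG}(M)/\rad\cong k$ --- is asserted as a ``plan'' and never carried out. In fact, the abelianness of $G/P$ allows you to extend $\tilde S$ to a linear character $\lambda$ of $G$ (trivial on $P$), so that ${}^{g}\tilde S$ and $\tilde S$ agree on $QC\cap{}^{g}QC$ for \emph{every} $g\in G$; hence every summand in your Mackey decomposition is one-dimensional and $\dim_{k}\End_{kG}(M)=|QC\backslash G/QC|$, which is typically larger than~$1$. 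Knowing that the $g=1$ term contributes a copy of $k$ therefore does not by itself force locality: you must still show that the remaining summands lie in the Jacobson radical, and nothing in your outline explains why they should. One way to close the gap is to use the extension $\lambda$ to reduce to the case $S=k$, i.e.\ $M\cong k[G/QC]$, then observe that $\Res^{G}_{P}(k[G/QC])$ is computed by Mackey to be $\Ind_{Q}^{P}(k)$ after checking that $P\backslash G/QC$ is a single double coset (write $G=P\cdot N_G(Q)\cdot C$ via your complement), and $\Ind_{Q}^{P}(k)$ is indecomposable because $P$ is a $p$-group; indecomposability of the restriction forces indecomposability of $M$. Alternatively, one may run a Clifford-theoretic argument through the normal subgroup $PC\trianglelefteq G$. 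Either way, this step needs to be written out.
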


        %%%--------------------------------------------------------------------------------------------------------
%%%-----------------------                       Section                    ------------------------------
%%%-------------------------------------------------------------------------------------------------------

\vspace{6mm}

\section{Background material on Frobenius groups}
\label{sec:Frobenius_prelim}

We start by reviewing basic definitions and results about the character theory of Frobenius groups.

\subsection{Frobenius groups} 

Recall that a finite group  $G$ admitting a non-trivial proper subgroup~$H$ such that
$$H\cap gHg^{-1} = \{1\}$$
for each $g\in G\setminus H$ is called a \emph{Frobenius group} with \emph{Frobenius complement} $H$ (or a \emph{Frobenius group with respect to} $H$). Frobenius proved that in such a group there exists a uniquely determined normal subgroup $F$ such that $G$ is the internal semi-direct product of $F$ by $H$ (i.e. $G=FH$ and~$F\cap H= \{1\}$); concretely, 
$$F=\{1\}\cup \bigg( G\setminus \bigcup_{g\in G} {gHg^{-1}}\bigg).$$
The normal subgroup $F$ is called the \emph{Frobenius kernel} of $G$. See e.g.~{\cite[\S14A]{CurtisReinerMethods1}}. In the sequel, we write Frobenius groups with respect to $H$ as $F\rtimes H$.
We will use the following well-known properties.
 
\begin{lem}\label{lem:Frobenius_General_Properties}
Let $G$ be a Frobenius group with Frobenius complement $H$ and Frobenius kernel~$F$. Then the following assertions hold.
    \begin{enumerate}[label=\rm(\alph{enumi}),itemsep=1ex]
        \item If $H$ is abelian, then $H$ is cyclic.
        \item The integer $|H|$ divides $|F| - 1$. In particular $|G: F|$ and $|F|$ are coprime integers, hence~$F$ is characteristic in $G$.\label{lem:F_characteristic_in_G}
        \item For each $f\in F\setminus\{1\}$ we have $C_G(f)\leq F$.\label{lem:CentraliserFrobenius}
        \item 
        The commutator subgroup of $G$  is $[G,G]=F\rtimes [H,H]$.
    \end{enumerate}
\end{lem}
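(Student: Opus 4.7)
The four statements are classical and can be derived largely from the defining Frobenius condition together with the explicit description of $F$ as the complement of the union of the non-identity elements of the conjugates of $H$. I would handle them in the order (c), (b), (a), (d), because (c) underpins most of the rest and is the cleanest starting point.

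For (c), I would argue by contradiction. Pick $f\in F\setminus\{1\}$ and suppose $g\in C_G(f)$ with $g\notin F$. By the description of $F$ recalled in the subsection, any element of $G\setminus F$ lies in some conjugate $yHy^{-1}$, so write $g=yhy^{-1}$ with $h\in H\setminus\{1\}$. Replacing $f$ by $f_1:=y^{-1}fy\in F\setminus\{1\}$ (using $F\unlhd G$) reduces the problem to the case $g=h\in H\setminus\{1\}$ centralising a non-trivial element of $F$. The relation $hf_1h^{-1}=f_1$ rewrites as $h=f_1^{-1}hf_1\in f_1^{-1}Hf_1$, so $h\in H\cap f_1^{-1}Hf_1$. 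Since $f_1\in F\setminus\{1\}\subseteq G\setminus H$, the Frobenius condition forces this intersection to be $\{1\}$, contradicting $h\neq 1$.

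For (b), (c) immediately implies that conjugation gives a fixed-point-free action of $H$ on $F\setminus\{1\}$: if $h\in H$ fixed $f\in F\setminus\{1\}$, then $h\in C_G(f)\subseteq F$, forcing $h\in F\cap H=\{1\}$. Hence every $H$-orbit on $F\setminus\{1\}$ has size $|H|$, so $|H|$ divides $|F|-1$. From this $\gcd(|F|,|H|)=\gcd(|F|,|G:F|)=1$, so $F$ is a normal Hall $\pi$-subgroup (for $\pi$ the prime divisors of $|F|$) and is therefore the unique such subgroup, hence characteristic. For (a) I would appeal to the classical structural fact that every Sylow subgroup of a Frobenius complement is either cyclic or generalised quaternion (e.g.\ \cite[\S14A]{CurtisReinerMethods1}); an abelian $H$ can have no non-abelian Sylow subgroups, so all its Sylow subgroups are cyclic and $H$ itself is cyclic. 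This is the one step that genuinely relies on non-trivial background, and I expect it to be the main obstacle in the sense that it must be imported from the literature rather than derived from (c).

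For (d), $[H,H]\leq [G,G]$ and $F\rtimes[H,H]\unlhd G$ with abelian quotient $G/(F\rtimes[H,H])\cong H/[H,H]$, giving one inclusion. For the reverse, it suffices to show $F\leq[G,G]$. I would fix a non-trivial $h\in H$ and consider the map $\phi_h\colon F\to F$, $f\mapsto [f,h]=f\cdot(hf^{-1}h^{-1})$, which lands in $F$ because $F\unlhd G$. If $\phi_h(f)=1$ then $f\in C_G(h)$, and then (c) applied with the roles of $F$ and $H$ interchanged (or rather, running the same argument: if $h\in C_G(f)$ with $f\neq 1$, then $h\in F\cap H=\{1\}$) forces $f=1$. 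Hence $\phi_h$ is injective; as $F$ is finite, it is bijective, so $F=\{[f,h]:f\in F\}\subseteq[F,H]\subseteq[G,G]$. Combining both inclusions gives $[G,G]=F\rtimes[H,H]$.
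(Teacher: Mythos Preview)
Your argument is essentially sound and, for part~(d), follows the same idea as the paper. The main difference is one of packaging: the paper simply cites \cite{HuppertCharacterTheory} and \cite{CurtisReinerMethods1} for (a)--(c) and only argues (d) directly, whereas you supply self-contained proofs of (b) and (c) from the Frobenius condition and the description of $F$. Your route for (a) via the cyclic/quaternion Sylow structure of Frobenius complements is a different (but equally standard) citation than the paper's appeal to Burnside via Huppert; both import the result rather than prove it.

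There is one small slip in your treatment of (d). You show that $\phi_h(f)=1$ forces $f=1$ and then conclude ``hence $\phi_h$ is injective''. But $\phi_h(f)=[f,h]=f\cdot hf^{-1}h^{-1}$ is not a group homomorphism when $F$ is non-abelian, so a trivial fibre over $1$ does not by itself give injectivity. The paper (and the correct version of your argument) instead checks pairwise distinctness directly: if $f_1hf_1^{-1}h^{-1}=f_2hf_2^{-1}h^{-1}$ then a one-line manipulation gives $(f_2^{-1}f_1)h=h(f_2^{-1}f_1)$, so $h\in C_G(f_2^{-1}f_1)$, and (c) forces $f_1=f_2$. With this adjustment your proof of (d) coincides with the paper's.
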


\begin{proof}
Assertion (a) is due to Burnside and follows directly from \cite[16.7 Theorem b)]{HuppertCharacterTheory}. Assertion (b) is given by {\cite[16.6 Lemma a)]{HuppertCharacterTheory}}. Assertion (c) is given by {\cite[(14.4) Proposition~(i)]{CurtisReinerMethods1}}.
For Assertion (d), first it is clear that $[G,G]\leq F\rtimes [H,H]$ as $G/(F[H,H])\cong H/[H,H]$ is abelian. To prove the reverse inclusion, it suffices to prove that $F=[F,H]$, since then $F[H,H]=[F,H][H,H]\leq [G,G]$. Now, as $G$ is a Frobenius group,  $H\neq 1$ and there exists $h\in H\setminus\{1\}$. Then, the $|F|$ elements of the set $\{fhf^{-1}h^{-1}\in G\mid f\in F\}$ are pairwise distinct. Else $f_1hf_1^{-1}h^{-1}=f_2hf_2^{-1}h^{-1}$ with $f_1\neq f_2\in F$ implies that $h\in C_G(f_2^{-1}f_1)$, contradicting~(c). Finally, as $F\unlhd G$, any element in this set is in $F$, proving that $F\leq [F,H]\leq~F$.
\end{proof}

\subsection{Characters of Frobenius groups}

\noindent The ordinary characters of Frobenius groups are well-known and given by the following theorem.

\begin{thm}[{\cite[(14.4) Proposition]{CurtisReinerMethods1}}]\label{prop:Ordinary_Irreducible_Character_Of_Frobenius_Groups}
	Let $G$ be a Frobenius group with Frobenius complement $H$ and Frobenius kernel~$F$. Then
 $$\Irr(G)=\{\Inf_{G/F}^G(\psi)\ |\ \psi\in \Irr(G/F)\} \sqcup \{\Ind_{F}^{G}(\nu)\ |\ \nu \in T \}\,,$$
 where $T$ is a set of representatives for the orbits of the action of $G$ by conjugation on $\Irr(F)\setminus\{1_F\}$. 
\end{thm}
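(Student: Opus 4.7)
The plan is to verify the decomposition of $\Irr(G)$ using Clifford theory applied to the normal subgroup $F \unlhd G$, where the crucial input is the Frobenius property (specifically \cref{lem:Frobenius_General_Properties}\ref{lem:CentraliserFrobenius}). First I would dispose of the inflated characters: since $G/F \cong H$ and inflation preserves irreducibility, the set $\{\Inf_{G/F}^G(\psi) \mid \psi \in \Irr(G/F)\}$ consists of $|\Irr(H)|$ pairwise distinct irreducible characters of $G$, each vanishing on no element of $F$ other than by a value equal to $\psi(1)\cdot 1_F$; more precisely, their restriction to $F$ is a multiple of $1_F$.

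The core work is showing that, for each $\nu \in \Irr(F) \setminus \{1_F\}$, the induced character $\Ind_F^G(\nu)$ is irreducible. By Clifford theory this is equivalent to showing $I_G(\nu) = F$. The conjugation action of $G$ on $\Irr(F)$ is trivial on $F$, so it factors through $G/F \cong H$, and it suffices to show that no $h \in H \setminus \{1\}$ fixes any $\nu \neq 1_F$. Here is the key step: by \cref{lem:Frobenius_General_Properties}\ref{lem:CentraliserFrobenius}, if $h \in H \setminus \{1\}$ satisfied $h f h^{-1} = f$ for some $f \in F \setminus \{1\}$, then $h \in C_G(f) \leq F$, a contradiction. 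Hence $h$ acts without non-trivial fixed points on $F$, so on the $F$-conjugacy classes of $F$ only the class of $1$ is fixed. Brauer's permutation lemma then yields that $h$ fixes only the trivial character on $\Irr(F)$, so $\mathrm{Stab}_H(\nu) = 1$ for every $\nu \neq 1_F$, giving $I_G(\nu) = F$ as desired.

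Next I would check the distinctness of the listed characters. Within the induced family, Clifford's theorem gives $\Ind_F^G(\nu) = \Ind_F^G(\nu')$ iff $\nu'$ and $\nu$ lie in the same $G$-orbit, so choosing orbit representatives in $T$ yields pairwise distinct irreducibles. No $\Ind_F^G(\nu)$ can coincide with an inflation, because $\Res^G_F \Ind_F^G(\nu) = \sum_{h \in G/F} \lconj{h}{\nu}$ contains $\nu \neq 1_F$ with multiplicity one, whereas the restriction of any inflation is a multiple of $1_F$.

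Finally, exhaustiveness follows from a degree-sum check rather than a direct enumeration of conjugacy classes: the inflated characters contribute $\sum_{\psi \in \Irr(H)} \psi(1)^2 = |H|$ to $\sum_\chi \chi(1)^2$, while each $\Ind_F^G(\nu)$ has degree $|H|\nu(1)$, and summing over $T$ gives $|H|^2 \cdot \frac{1}{|H|}\sum_{\nu \in \Irr(F) \setminus \{1_F\}} \nu(1)^2 = |H|(|F|-1)$. The total is $|H| + |H|(|F|-1) = |H||F| = |G|$, which by the second orthogonality relation shows we have accounted for all of $\Irr(G)$. The main obstacle is the inertia calculation in the second paragraph; once the fixed-point-free action on $F \setminus \{1\}$ is transferred to $\Irr(F)$ via Brauer's permutation lemma, the remainder is a straightforward application of Clifford theory and a dimension count.
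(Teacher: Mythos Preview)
Your proof is correct. The paper does not supply its own proof of this statement: it is quoted directly from \cite[(14.4) Proposition]{CurtisReinerMethods1} as a known result and used as input for the subsequent computations, so there is no argument in the paper to compare against. Your approach via Clifford theory together with Brauer's permutation lemma (to transfer the fixed-point-free action of $H$ on $F\setminus\{1\}$ to $\Irr(F)\setminus\{1_F\}$) is the standard one and matches what one finds in the cited reference. One small terminological quibble: the identity $\sum_{\chi\in\Irr(G)}\chi(1)^2=|G|$ that you invoke at the end is usually attributed to column orthogonality at the identity (or simply the regular representation), not the ``second orthogonality relation''; the argument itself is fine.
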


\noindent Notice that the first set is precisely the set of all irreducible characters of $G$ which contain $F$ in their kernels.

\begin{prop}\label{prop:decompositionMatrix_Frobenius_groups_all_cases_relevant_for_us}
    Let $G$ be a Frobenius group with cyclic Frobenius complement $H\cong C_m$ for some integer~$m\geq 2$ and abelian Frobenius kernel $F$ of order $p^r$ for some positive integer~$r\geq 1$. Then the following assertions hold:
 \begin{enumerate}[label=\rm(\alph{enumi}),itemsep=1ex]
     \item %the set of the ordinary irreducible characters of $G$ is
     $\Irr(G)=\{\chi_1, \ldots , \chi_{m+\frac{p^r-1}{m}}\}$ where  for each $1\leq a\leq m$ we set 
     $$ \chi_a := \Inf_{G/F}^{G}(\xi_a)$$
     with  $\xi_a\in\Irr(C_m)$  as defined in \cref{nota:ordinary_ct_of_cyclic_group}, and  
     $$\{\chi_{m+1},\ldots , \chi_{m+\frac{p^r-1}{m}}\} = \{\Ind_{F}^{G}(\nu)\ |\ \nu \in T \}$$
     where $T$ is a set of representatives for conjugation action of $G$ on  $\Irr(F)\setminus \{1_F\}$\,;
     \item %The set of irreducible Brauer characters of $G$ is 
     $\Lin(G)=\Inf_{G/F}^{G}(\Irr(G/F))=\{\chi_1,\ldots,\chi_m\}$ and $\IBr_p(G) = \{\varphi_1, \ldots , \varphi_m\}$
     where  $\varphi_a := \chi_a^{\circ}$ for each $1\leq a\leq m$\,;
     \item $H$ is a set of representatives of the $p$-regular conjugacy classes of $G$;
    \item %The decomposition matrix 
    %$\decmat(G)$ is as given in \cref{table:decompositionMatrix_Frobenius_groups_all_cases_relevant_for_us}.
    $\chi_{a}^\circ=\sum_{j=1}^{m} \varphi_j$ for each $1\leq a\leq \frac{p^r-1}{m}$\,.
 \end{enumerate}
\end{prop}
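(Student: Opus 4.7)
The plan is to treat the four assertions in sequence, with (a)--(c) amounting to book-keeping and (d) being the only genuine computation. Part (a) is a direct specialisation of \cref{prop:Ordinary_Irreducible_Character_Of_Frobenius_Groups} to the present setting, once we verify that each $G$-orbit on $\Irr(F)\setminus\{1_F\}$ has cardinality exactly $m$. Since $F$ is abelian, $F$ acts trivially by conjugation on $\Irr(F)$, so the $G$-action factors through $H\cong G/F$. The Frobenius property forces $H$ to act fixed-point-freely on $F\setminus\{1\}$, and Brauer's permutation lemma guarantees that the number of $H$-fixed irreducible characters of $F$ equals the number of $H$-fixed elements of $F$; hence $H$ also acts fixed-point-freely on $\Irr(F)\setminus\{1_F\}$, every orbit has size $|H|=m$, and there are $(p^r-1)/m$ of them, producing $\chi_{m+1},\ldots,\chi_{m+(p^r-1)/m}$.

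For part~(b), we first compute $[G,G]$ using \cref{lem:Frobenius_General_Properties}(d): as $H$ is cyclic, $[H,H]=1$ and $[G,G]=F$, so $\Lin(G)$ is precisely the set of characters with $F$ in their kernel, namely $\{\chi_1,\ldots,\chi_m\}$. On the modular side, \cref{lem:Frobenius_General_Properties}(b) makes $F$ the unique (normal) Sylow $p$-subgroup and $G/F\cong C_m$ an abelian $p'$-group; \cref{lem:Simples_G_mod_Op(G)}(a) then identifies every simple $kG$-module with the inflation of a one-dimensional $k[G/F]$-module, so $\IBr_p(G)=\{\chi_1^\circ,\ldots,\chi_m^\circ\}$. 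Part~(c) follows at once: every $p$-regular element of $G$ is $G$-conjugate into the $p'$-Hall subgroup $H$, and if $h_1=gh_2g^{-1}$ with $h_1,h_2\in H\setminus\{1\}$ then $1\neq h_1\in H\cap gHg^{-1}$ forces $g\in H$ by the Frobenius condition, whence $h_1=h_2$ as $H$ is abelian; together with the identity this yields $m$ distinct classes, matching $|\IBr_p(G)|=m$ from~(b).

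Part~(d) --- which we parse as asserting that each induced character $\chi_{m+a}=\Ind_F^G(\nu)$ (for $1\leq a\leq (p^r-1)/m$) satisfies $\chi_{m+a}^\circ=\sum_{j=1}^{m}\varphi_j$ --- then reduces to evaluating both sides on the $p$-regular class representatives supplied by~(c). For $h\in H\setminus\{1\}$ every conjugate $ghg^{-1}$ has the $p'$-order of $h$ and hence cannot lie in the $p$-group $F$; the Frobenius induction formula therefore gives $\Ind_F^G(\nu)(h)=0$, while $\Ind_F^G(\nu)(1)=[G:F]=m$. Conversely, each $\varphi_j$ restricts on $H$ to the linear character $\xi_j$ of $C_m$, and column-orthogonality in $X(C_m)$ yields $\sum_{j=1}^{m}\xi_j(h)=0$ for $h\neq 1$ and $m$ at the identity. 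The two class functions thus agree on every $p$-regular class. The main (and really the only non-routine) point in the whole argument is the fixed-point-freeness of $H$ on $\Irr(F)\setminus\{1_F\}$ in part~(a); everything else is standard Clifford theory together with a direct application of the induction formula.
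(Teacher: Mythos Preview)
Your argument is correct and follows the same overall strategy as the paper; the differences are only in execution. In part~(a) the paper simply cites \cite[18.7 Theorem b)]{HuppertCharacterTheory} for the orbit count, whereas you recover it via Brauer's permutation lemma; note that to conclude \emph{fixed-point-free} (not merely ``no global fixed point'') you should apply the lemma to each non-identity $h\in H$ separately, which is immediate since every such $h$ already acts without fixed points on $F\setminus\{1\}$. In part~(c) your argument is in fact cleaner than the paper's: you use the defining Frobenius condition $H\cap gHg^{-1}=\{1\}$ for $g\notin H$ directly, while the paper unwinds $g=fh$ and argues with the fixed-point-free action. Part~(d) is the same computation on both sides---the paper packages the vanishing of $\Ind_F^G(\nu)$ on $H\setminus\{1\}$ as $\Res^G_H\Ind_F^G(\nu)=\rho_H$ via the same Huppert reference, whereas you read it off the induction formula; either way one is comparing with $\sum_j\xi_j$ on $H$.
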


\begin{proof}
Recall from \cref{lem:Frobenius_General_Properties} that $\textup{gcd}(m,p)=1$. 
\begin{enumerate}[label=\rm(\alph{enumi}),itemsep=1ex]
    \item First, it is clear from \cref{prop:Ordinary_Irreducible_Character_Of_Frobenius_Groups} 
    that $G$ has $m$ pairwise distinct ordinary irreducible characters which are inflated from $G/F\cong H\cong C_m$ to $G$. Moreover, 
    \begin{equation*}
    \begin{split}
    |\Irr(G)| &= |\Irr(G/F)| + |\{\Ind_{F}^{G}(\nu)\mid \nu \in T \}|= m + \frac{|F|-1}{|H|}= m +\frac{p^r-1}{m}
    \end{split}
    \end{equation*}
    where the last-but-one equality holds by {\cite[18.7 Theorem b)]{HuppertCharacterTheory}}.
   % |\Irr(G)| = |\Irr(G/F)| + \frac{|F|-1}{|H|}= m +\frac{p^r-1}{m}
    \item The first claim follows from (a) and \cref{lem:Frobenius_General_Properties}(d). Then, by \cref{lem:Simples_G_mod_Op(G)}, we have $|\IBr_p(G)|=|\IBr_p(G/F)|=|H|=m$.  Since by construction %the reductions modulo $p$ of the $m$ linear characters $\chi_1, \ldots, \chi_m$ of $G$
    $\chi_1^\circ, \ldots, \chi_m^\circ$
    are pairwise distinct linear Brauer characters, they already account for all the irreducible Brauer characters of~$G$. The claim follows.
    \item Assume that $h_1\neq h_2\in H\setminus \{1\}$ are conjugate in $G$, that is, $gh_1g^{-1}=h_2$ for some $g=fh\in FH=G$ with $f\in F$ and $h\in H$. 
    %Without loss of generality, we may assume that $f\neq 1$. 
    It follows %from the above equation 
    that
    $ fhh_1h^{-1}f^{-1} =  h_2$, which implies that ${h_1}^{-1}\cdot fh_1f^{-1} = {h_1}^{-1}\cdot h_2$  as $H$ is cyclic. Since $H$ acts fixed-point-freely on $F\setminus \{1\}$, we see that ${h_1}^{-1} fh_1\in F\setminus \{f\}$.
    This is a contradiction, as $F\cap H = \{1\}$. The claim follows, as by (b), a set of representatives for the $p$-regular classes of~$G$ has size $|H|$.
    \item It is immediate from part (b) that the first $m$ rows of $\decmat(G)$ are given by the identity matrix of size $m\times m$. Let now $\chi_a\in\Irr(G)$ with $m+1\leq a\leq m+\frac{p^r-1}{m}$. By (a), there exists a character $\nu\in \Irr(F)\setminus \{1_F\}$ such that $\chi_a=\Ind_F^G(\nu)$. By (c), we only need to prove that ${\chi_a^\circ}\big|_{H} = \big( \sum\limits_{i=j}^{m} {\varphi_j}\big) \big|_{H}$\,. Let $\rho_H$ be the regular character of $H$. Then we have
$$\chi_a^\circ\big|_H = (\Ind_F^G(\nu))^\circ\big|_H = \Res_H^G(\Ind_F^G(\nu))=\nu(1)\cdot \rho_H = \rho_H =\sum_{\xi\in\Irr(H)}\xi = \sum_{j=1}^{m} \varphi_j,$$
where the third equality follows from \cite[18.7 Theorem (b)]{HuppertCharacterTheory} and the last equality follows from (b) and the fact that $H$ is an abelian $p^\prime$-group.
\end{enumerate}
\end{proof}

\subsection{Frobenius groups of type \texorpdfstring{$(C_p\times C_p)\rtimes H$}{[(CpxCp)xH]}}
Recall that $p$ denotes an odd prime number. In this article, the aim is to focus on Frobenius groups~$G$ with cyclic Frobenius complement of order~$m$ and elementary abelian Frobenius kernel of order~$p^2$. In particular, we will compute the trivial source character tables $\Triv_p(G)$ in the following two extremal cases: first the case, in which there is precisely one $G$-conjugacy class of cyclic subgroups of order $p$ (we will call this the \emph{maximal fusion case}); second, the case in which there are precisely $p+1$\ $G$-conjugacy classes of cyclic subgroups of order $p$ (we call this the \emph{minimal fusion case}). In this subsection, we characterise such groups.\par\ \par
\noindent Given integers $m,n > 1$, we denote by $\Iso(m)$ the set of isomorphism classes of metabelian Frobenius groups with Frobenius complement of order $m$ and we set
$$\Iso(m, n):= \{G\ \in \Iso(m)\ \mid \ |G|=mn\}.$$ 
Note that Frobenius groups~$G$ with cyclic Frobenius complement of order~$m$ and elementary abelian Frobenius kernel of order~$p^2$ comprise all elements of $\Iso(m,p^2)$ whose Frobenius kernels are not cyclic.

\begin{lem}\label{LemmaIsoMetabelianFrobeniusSpecialCases}
Let $p$ be an odd prime number and let $m>1$ be an integer such that $m\nmid (p-1)$. Then the following assertions hold:
\begin{enumerate}[label=\rm(\alph{enumi}),itemsep=1ex]
    \item $|\Iso(m,p^2)|=1$;
    \item if $m=p^2-1$ then the unique element of $\Iso(p^2-1,p^2)$ is the affine linear group $\AGL_1(p^2)$ and can be identified with the subgroup
		$$\mathcal{G}:=\left\{\begin{pmatrix}
		a & 0 \\ b & 1\end{pmatrix}\in\GL_2(\IF_{p^2})\ |\ a\in \IF_{p^2}^\times, b\in \IF_{p^2} \right\}$$
		\noindent of $\GL_2(\IF_{p^2})$, which is a Frobenius group with Frobenius kernel and Frobenius complement
		$$\mathcal{F}:=\left\{\begin{pmatrix}
		1 & 0 \\ b & 1\end{pmatrix}\in\GL_2(\IF_{p^2})\ |\ b\in \IF_{p^2} \right\}\ \textup{ and }\ \mathcal{H}:=\left\{\begin{pmatrix}
		a & 0 \\ 0 & 1\end{pmatrix}\in\GL_2(\IF_{p^2})\ |\ a\in \IF_{p^2}^\times\right\}$$
respectively;
    \item if $p< m < p^2-1$ then the unique element of $\Iso(m,p^2)$ can be identified with the subgroup $\widetilde{\mathcal{G}}= \mathcal{F} \widetilde{\mathcal{H}} = \mathcal{F}\rtimes \widetilde{\mathcal{H}}$ of $\mathcal{G}$, where $|\widetilde{\mathcal{H}}|=m$.
\end{enumerate}
\end{lem}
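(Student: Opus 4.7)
The plan is to reduce the classification to understanding the action of a generator of $H\cong C_m$ on the $\mathbb{F}_p$-vector space $F\cong C_p\times C_p$. Since $G$ is a Frobenius group with complement $H$ and kernel $F$, Lemma~\ref{lem:Frobenius_General_Properties}(c) forces $H$ to act fixed-point-freely on $F\setminus\{1\}$, and in particular the induced homomorphism $\rho\colon H\to\mathrm{GL}_2(\mathbb{F}_p)$ is faithful. By Lemma~\ref{lem:Frobenius_General_Properties}(b), $m$ divides $p^2-1$ and is coprime to $p$, so any element $\rho(h)$ is semisimple; if its characteristic polynomial split over $\mathbb{F}_p$, then $\rho(h)$ would be diagonalisable and its order would divide $p-1$, forcing $m\mid p-1$, contrary to the hypothesis. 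Hence the characteristic polynomial of $\rho(h)$ (for $h$ a generator of $H$) is irreducible of degree~$2$ over $\mathbb{F}_p$.

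From this I would conclude that $F$ carries the structure of a $1$-dimensional vector space over the field $\mathbb{F}_p[\rho(h)]\cong\mathbb{F}_{p^2}$, with $h$ acting by multiplication by a scalar $\zeta\in\mathbb{F}_{p^2}^\times$ of multiplicative order~$m$. For the uniqueness statement~(a), I would then observe that $\mathbb{F}_{p^2}^\times$ is cyclic of order $p^2-1$ and hence contains a \emph{unique} subgroup of order~$m$; any two choices of a generator $\zeta$ of that subgroup give rise to isomorphic semidirect products (via the obvious relabelling of the generator of~$H$). Thus $|\Iso(m,p^2)|=1$.

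For~(b) and~(c), I would exhibit the group described in~(b) as a concrete realisation. Setting
\[
    \mathcal{F}\cong(\mathbb{F}_{p^2},+),\qquad \mathcal{H}\cong\mathbb{F}_{p^2}^\times,
\]
via the stated matrix embeddings in $\mathrm{GL}_2(\mathbb{F}_{p^2})$, a direct computation of the conjugation action shows that $\begin{pmatrix}a&0\\0&1\end{pmatrix}$ acts on $\begin{pmatrix}1&0\\b&1\end{pmatrix}$ by $b\mapsto ab$, so $\mathcal{G}=\mathcal{F}\rtimes\mathcal{H}\cong\mathrm{AGL}_1(p^2)$ is a Frobenius group of the required form with $|\mathcal{H}|=p^2-1$. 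Since $\mathbb{F}_{p^2}^\times$ is cyclic, it contains a unique subgroup $\widetilde{\mathcal{H}}$ of order~$m$ whenever $m\mid p^2-1$, and then $\widetilde{\mathcal{G}}=\mathcal{F}\rtimes\widetilde{\mathcal{H}}$ realises the unique isomorphism class in $\Iso(m,p^2)$ by~(a).

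The main technical point is the first step: verifying that, given the hypotheses, the action of a generator of $H$ on $F$ really must be irreducible over $\mathbb{F}_p$. Once that is clear, the identification of $F$ with $\mathbb{F}_{p^2}$ and the uniqueness of a subgroup of fixed order in $\mathbb{F}_{p^2}^\times$ make the rest of the argument essentially automatic.
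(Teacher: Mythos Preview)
Your argument is correct and is genuinely more self-contained than the paper's own proof: the paper dispatches~(a) by citing a counting formula of Brown--Humphreys \cite[Theorem~11.7 and Remark~11.13(C)]{BH98}, and~(b) by pointing to an exercise in Jacobson, whereas you give a direct representation-theoretic argument via the action of $H$ on $F$ viewed as an $\mathbb{F}_p$-vector space. Your route has the advantage of making the identification with $\AGL_1(p^2)$ and its subgroups emerge naturally from the irreducibility of $\rho(h)$, rather than being verified after the fact.

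Two small points are worth tightening. First, you implicitly assume that the Frobenius kernel $F$ (which a priori is only known to be abelian of order $p^2$) is elementary abelian. This needs one line: if $F\cong C_{p^2}$ then $\Aut(F)\cong(\mathbb{Z}/p^2\mathbb{Z})^\times$ has order $p(p-1)$, so a faithful action of $C_m$ with $\gcd(m,p)=1$ would force $m\mid p-1$, contrary to hypothesis. Second, your conjugation computation in~(b) has a sign slip: one finds
\[
\begin{pmatrix}a&0\\0&1\end{pmatrix}\begin{pmatrix}1&0\\b&1\end{pmatrix}\begin{pmatrix}a^{-1}&0\\0&1\end{pmatrix}=\begin{pmatrix}1&0\\a^{-1}b&1\end{pmatrix},
\]
so the action is $b\mapsto a^{-1}b$ rather than $b\mapsto ab$. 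This is inconsequential for the argument (it is still fixed-point-free scalar multiplication), but should be corrected.
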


\begin{proof}
\begin{enumerate}[label=\rm(\alph{enumi}),itemsep=1ex]
    \item This follows from the formula given in \cite[Theorem 11.7]{BH98} together with \cite[Remark 11.13.(C)]{BH98}.

    \item
    Assertion (b) is well-known and follows for example from \cite[Exercise 5.15.7]{JacobsonBasicAlgebraII}. 
    \item The group $\widetilde{\mathcal{G}}= \mathcal{F}\rtimes \widetilde{\mathcal{H}}$ is obviously a subgroup of $\mathcal{G}$. Moreover, as $\widetilde{\mathcal{H}} < \mathcal{H}$ and $$\qquad\qquad\widetilde{\mathcal{G}}\setminus \widetilde{\mathcal{H}} = \{f\cdot \widetilde{h}\in\widetilde{\mathcal{G}}\ |\ f\in \mathcal{F}\setminus \{1\}, \widetilde{h}\in \widetilde{\mathcal{H}}\} \quad \subset \quad\mathcal{G}\setminus \mathcal{H} = \{f\cdot h \in \mathcal{G}\ |\ f\in \mathcal{F}\setminus \{1\}, h\in \mathcal{H}\},$$
    it follows from the definition that $\mathcal{F}\rtimes \widetilde{\mathcal{H}}$ is again a Frobenius group.
\end{enumerate}
\end{proof}

\begin{prop}\label{prop:FrobeniusGroups_max_fusion_and_no_fusion}
	Let $G$ be a Frobenius group with Frobenius complement $H\cong C_m$ and Frobenius kernel $F\cong C_p\times C_p$ where $p$ is an odd prime number.
	\begin{enumerate}[label=\rm(\alph*),itemsep=1ex]
		\item 
        The number of $G$-conjugacy classes of subgroups of $G$ of order $p$ equals $1$ if and only if
        $(p + 1)(p-1)_2\mid m$ if and only if~$m=(p + 1)\cdot \mathrm{gcd}(p-1,m)$.
        In this case, we have $|N_H(C)|=\mathrm{gcd}(p-1,m)$ for any subgroup $C\leq G$ of order $p$.
\item 
The number of $G$-conjugacy classes of subgroups of $G$ of order $p$ equals $p+1$ if and only if for any $h\in H\setminus\{1\}$, there exists an integer $1< a(h)\leq p-1$ such that $hfh^{-1}=f^{a(h)}$ for any $f\in F$.
In this case, $H$ is cyclic of order dividing $p-1$.
\end{enumerate}	
\end{prop}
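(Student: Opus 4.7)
The plan is to translate the subgroup-counting problem into counting orbits of $H$ acting on $\mathbb{P}^1(\IF_p)$. Since $\gcd(|F|,|H|)=1$, the normal Frobenius kernel $F$ is the unique Sylow $p$-subgroup of $G$, so every order-$p$ subgroup of $G$ lies in $F$. Identifying $F$ with $\IF_p^2$ puts these subgroups in bijection with the $p+1$ lines of $\IF_p^2$, and since $F$ is abelian, conjugation by $g=fh\in FH$ agrees on such subgroups with conjugation by $h$, so $G$-conjugacy reduces to $H$-conjugacy. The Frobenius hypothesis makes the induced homomorphism $\rho:H\to\GL_2(\IF_p)$ injective, so the count equals the number of orbits of $\rho(H)$ on $\mathbb{P}^1(\IF_p)$.

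For part (a), let $K\leq H$ be the subgroup acting trivially on $\mathbb{P}^1(\IF_p)$, equivalently the preimage under $\rho$ of the scalar matrices. Ruling out the alternatives for a generator $g$ of $\rho(H)$---scalar (yields $p+1$ fixed lines, hence $p+1$ orbits), non-scalar diagonalisable with eigenvalues in $\IF_p^\times$ (two fixed lines, hence $\geq 2$ orbits), or of Jordan type (excluded by $\gcd(m,p)=1$)---we see that $g$ must have irreducible characteristic polynomial over $\IF_p$, so $\rho(H)$ is contained in the centraliser of $g$, a Singer cycle $\mathcal{S}\cong\IF_{p^2}^\times$ of order $p^2-1$. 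Inside the cyclic group $\mathcal{S}$ we read off $|K|=|\rho(H)\cap\IF_p^\times I|=\gcd(m,p-1)$, and the image of $\rho(H)$ in $\mathcal{S}/(\mathcal{S}\cap\IF_p^\times I)$ sits inside the cyclic group of order $p+1$ that acts regularly on $\mathbb{P}^1(\IF_p)$. Hence all orbits of $\rho(H)$ have equal size $|H/K|=m/\gcd(m,p-1)$, so there is a single orbit iff $m=(p+1)\gcd(m,p-1)$, and then orbit-stabiliser yields $|N_H(C)|=m/(p+1)=\gcd(m,p-1)$.

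The reformulation as $(p+1)(p-1)_2\mid m$ I would establish via a $2$-adic analysis using $\gcd(p+1,p-1)=2$ (so $\gcd((p+1)_{2'},(p-1)_{2'})=1$) together with $m\mid p^2-1$. In one direction, if $m=(p+1)\gcd(m,p-1)$, comparing $2$-parts forces $m_2=(p+1)_2(p-1)_2$, while the odd parts match automatically since the odd part of $m$ equals $(p+1)_{2'}$ times an odd divisor of $(p-1)_{2'}$. Conversely, if $m=(p+1)(p-1)_2\,k$ with $k\mid(p-1)_{2'}$ (forced by $m\mid p^2-1$), a direct computation gives $\gcd(p-1,m)=(p-1)_2\,k$, whence $(p+1)\gcd(p-1,m)=m$.

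For part (b), having $p+1$ orbits forces every line to be fixed by $\rho(H)$. Any element of $\GL_2(\IF_p)$ fixing every line is scalar: fixing any two lines diagonalises it, and fixing a third distinct line (which exists since $p+1\geq 4$) forces its two eigenvalues to coincide. Writing $\rho(h)=a(h)\cdot I$ translates to $hfh^{-1}=f^{a(h)}$ for all $f\in F$, and Frobenius fixed-point-freeness forces $a(h)\neq 1$, i.e.\ $1<a(h)\leq p-1$, for each $h\neq 1$. The map $h\mapsto a(h)$ is an injective homomorphism $H\hookrightarrow\IF_p^\times$, so $H$ is cyclic of order dividing $p-1$; the converse is immediate. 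I expect the main technical obstacle to be the $2$-adic bookkeeping needed to identify the two equivalent numerical conditions in part~(a).
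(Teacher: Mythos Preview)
Your approach is correct and substantially more self-contained than the paper's. For part~(a) the paper simply cites \cite[Example~3.2]{BreuerHethelyiHorvathKuelshammer} and remarks that the formula follows from the transitive action of $H$ on the set $X$ of order-$p$ subgroups via $(p+1)\,|N_H(C)|=m$; for part~(b) it gives a bare-hands computation showing that distinct exponents on the two generators of $F$ would contradict the fusion hypothesis, and then uses the map $N_G(C)\to\Aut(C)$ with kernel $F$ to obtain $m\mid p-1$. You instead recast both parts uniformly as statements about the image $\rho(H)\leq\GL_2(\IF_p)$ acting on $\mathbb{P}^1(\IF_p)$: the Singer-cycle analysis in~(a) and the ``fixes every line $\Rightarrow$ scalar'' argument in~(b) are conceptually cleaner, and your $2$-adic verification of the equivalence $(p+1)(p-1)_2\mid m \Leftrightarrow m=(p+1)\gcd(p-1,m)$ (using $m\mid p^2-1$) is correct and is not spelled out in the paper at all.

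One presentational point in~(a): your case analysis reaches the Singer-cycle case \emph{assuming} a single orbit, so the final ``iff'' is, as written, only established inside that case. To close the converse you need one extra sentence: $m=(p+1)\gcd(p-1,m)$ forces $p+1\mid m$, which is incompatible with the generator $g$ being diagonalisable over $\IF_p$ (such a matrix has order dividing $p-1$); hence one is automatically in the Singer-cycle case, where your orbit-size formula $|H/K|=m/\gcd(m,p-1)=p+1$ gives a single orbit. With that added, the argument is complete.
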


\begin{proof}
Write $F=\langle x \rangle \times \langle y \rangle$ and $H=\langle h \rangle$. As $F$ is elementary abelian of order $p^2$, there are precisely~$p+1$ subgroups of $F$ of order $p$, namely
  $ R_i := \langle x\cdot y^{i}\rangle \quad (0\leq i\leq p-1)\ \text{ and }\ R_{p}:=\langle y \rangle,$
and we let $X := \{R_0,\ldots, R_p\}$.

	\begin{enumerate}[label=\rm(\alph{enumi}),itemsep=1ex]
		\item The equivalent conditions and the fact that $|N_H(C)|=\mathrm{gcd}(p-1,m)$ for any $C\in X$ are proved in  \cite[Example 3.2]{BreuerHethelyiHorvathKuelshammer}\,. 
        It follows from the action of $H$ on $X$ and the induced equality $(p+1)\cdot |N_H(C)|= |H|=m$.

\item 
If the number of $G$-conjugacy classes of subgroups of $G$ of order $p$ equals $p+1$, the action of~$H$ by conjugation must map the elements of $F\setminus\{1\}$ to powers of themselves, but different from themselves by \cref{lem:Frobenius_General_Properties}. Let $h\in H$. If the generators $x$ and $y$ satisfy $hxh^{-1}=x^{a_1}$ and $hyh^{-1}=y^{a_2}$ with $1<a_2<a_1\leq p-1$, then $x^{a_2}y^{a_1}$ is conjugate to $(xy)^{a_2a_1}$, which contradicts the assumption on the fusion of $p$-subgroups. Hence, the necessary condition holds. 
The sufficient condition is clear since $G$ is a semi-direct product of $F$ by $H$ and $F$ is abelian.
Next, for any  $C\in X$ we have that  
%$|\textup{Orb}(C)|=1$ and 
$N_G(C)=G$ acts by conjugation on $C$. The induced group homomorphism
 \[
     \Theta: N_G(C)\longrightarrow \Aut(C)\cong C_{p-1}, g\mapsto c_g
 \]
 (where $c_g:C\longrightarrow C, c\mapsto gcg^{-1}$ is the automorphism of conjugation by $g$)  is such that $\Ker(\Theta)=F$ by \cref{lem:Frobenius_General_Properties}. This yields
 \[
       m = \frac{|G|}{|F|} \,\Big|\, |\Aut(C)|=p-1\,.
 \]

	\end{enumerate}	
\end{proof}

\begin{rem} 
Note that we excluded the prime number $2$. In this case, however, the situation is simple:  there is, up to isomorphism, only one Frobenius group of type $(C_2\times C_2)\rtimes C_m$, namely the alternating group $\mathfrak{A}_4=V_4\rtimes \langle(1\;2\;3)\rangle$, where $V_4$ is the Klein-four group.
Indeed, as we must have  $m\mid (p^2-1)=3$, the only possibility is $m=3$, and the only other non-abelian group of order~$12$ is the dihedral group of order~$12$, which does not have any normal subgroup isomorphic to $C_2\times C_2$. The trivial source character table $\Triv_2(\mathfrak{A}_4)$ can be found for example in \cite{BBthesis}, or in \cite{BBCLNF} through the isomorphism $\mathfrak{A}_4\cong \PSL_2(3)$.
\end{rem}     
        %%%--------------------------------------------------------------------------------------------------------
%%%-----------------------                       Section                    ------------------------------
%%%-------------------------------------------------------------------------------------------------------

\vspace{6mm}

\section{The maximal fusion case}
\label{sec:MaximalFusion}

We now turn to the computation of the trivial source character tables of metabelian Frobenius groups with Frobenius kernel $C_p\times C_p$ and cyclic Frobenius complement $C_{m}$,  where~$p$ is odd and there is precisely one conjugacy class of subgroups of order $p$.

\begin{nota}\label{nota:MaximalFusionCase}
Throughout this section, we assume that $G=F\rtimes H$ is a Frobenius group with Frobenius kernel $F\cong C_p\times C_p$ and cyclic Frobenius complement $H\cong C_{m}$,  where~$p$ is odd and $m=(p+1)\cdot\gcd(p-1,m)$. 
By~\cref{LemmaIsoMetabelianFrobeniusSpecialCases}, up to isomorphism, there is only one group of this type: it is a subgroup of $\text{AGL}_1(p^2)$ and there is precisely one conjugacy class of subgroups of order $p$ by \cref{prop:FrobeniusGroups_max_fusion_and_no_fusion}(a). 
\par
We set 
$d(m):=\gcd(p-1,m)$ and 
$e(m):=\frac{p^2-1}{m}=\frac{p-1}{d(m)}$, and notice that $2\mid d(m)$ by \cref{prop:FrobeniusGroups_max_fusion_and_no_fusion}(a). 
We let $H:=\langle h \rangle$ and $F:=\langle x \rangle\times \langle y \rangle$ and we choose the following set of representatives for the {$G$-conjugacy} classes of $p$-subgroups of $G$:
\begin{align*}
Q_1 &:= \TrivialGroup,\\
Q_2 &:= \langle x \rangle,\\
Q_3 &:= F.
\end{align*}
As in \cref{prop:decompositionMatrix_Frobenius_groups_all_cases_relevant_for_us}, we let $\Irr(G)=\{\chi_1^{}, \ldots,\chi_m^{},\chi_{m+1}^{},\ldots , \chi_{m+e(m)}^{} \}$ 
where for each $1\leq a\leq m$ 
we set~${\chi_a := \Inf_{G/F}^{G}(\xi_a)}$  with  $\xi_a\in\Irr(H)$  as defined in \cref{nota:ordinary_ct_of_cyclic_group}, and $\chi_{m+b} := \Ind_{F}^{G}(\nu_b)$  for each $1\leq b\leq e(m) $ and pairwise non-conjugate characters $\nu_b \in \Irr(F)\setminus \{1_{F}\}$.
\end{nota}

\begin{lem}\label{lem:Normalisers_AGL_1_p_squared}
With the notation introduced in~\cref{nota:MaximalFusionCase}, we have: 
\begin{enumerate}[label=\rm(\alph{enumi}),itemsep=1ex]
    \item $N_G(Q_1) = G$ and $\overline{N}_G(Q_1)\cong G$;
    \item $N_G(Q_2) = F \rtimes N_H(Q_2)$ 
    and $\overline{N}_G(Q_2) \cong\langle y \rangle \rtimes N_H(Q_2)$, where $|N_H(Q_2)|=d(m)$ and $N_H(Q_2)=\langle h^{p+1} \rangle$;
    \item $N_G(Q_3) = G$ and $\overline{N}_G(Q_3)\cong H$\,.
\end{enumerate}

\end{lem}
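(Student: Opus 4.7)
The plan is to dispatch the three claims in order, with (a) and (c) being essentially immediate and (b) being the main content.

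For (a), note that $Q_1 = \{1\}$ is trivially normal in $G$, so $N_G(Q_1) = G$ and $\overline{N}_G(Q_1) = G/\{1\} = G$. For (c), recall that $F$ is the Frobenius kernel, which is characteristic in $G$ by \cref{lem:Frobenius_General_Properties}\ref{lem:F_characteristic_in_G}; in particular, $F \unlhd G$, so $N_G(F) = G$ and $\overline{N}_G(F) = G/F \cong H$, since $G = F \rtimes H$.

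The substance lies in (b). First I would show the semi-direct product decomposition of $N_G(Q_2)$: since $F$ is abelian, $F \leq N_G(Q_2)$. Using $G = FH$ with $F \cap H = \{1\}$, every element of $G$ is uniquely $fh'$ with $f \in F$ and $h' \in H$, and such an element normalizes $Q_2$ if and only if $h'$ does. Hence $N_G(Q_2) \cap H = N_H(Q_2)$, yielding $N_G(Q_2) = F \cdot N_H(Q_2) = F \rtimes N_H(Q_2)$. Next, \cref{prop:FrobeniusGroups_max_fusion_and_no_fusion}(a) gives $|N_H(Q_2)| = d(m)$. Since $H = \langle h \rangle$ is cyclic of order $m = (p+1)d(m)$, its unique subgroup of order $d(m)$ is $\langle h^{m/d(m)} \rangle = \langle h^{p+1} \rangle$, and therefore $N_H(Q_2) = \langle h^{p+1} \rangle$.

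Finally, for the quotient, observe that $Q_2 \leq F$ is normal in $N_G(Q_2) = F \rtimes N_H(Q_2)$, so
\[
\overline{N}_G(Q_2) = N_G(Q_2)/Q_2 \cong (F/Q_2) \rtimes N_H(Q_2),
\]
where the semi-direct product descends because $Q_2 \cap N_H(Q_2) \subseteq F \cap H = \{1\}$. Since $F = \langle x \rangle \times \langle y \rangle = Q_2 \times \langle y \rangle$, one has $F/Q_2 \cong \langle y \rangle$, giving the stated isomorphism $\overline{N}_G(Q_2) \cong \langle y \rangle \rtimes N_H(Q_2)$.

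The arguments are routine; the only mildly delicate point is the identification $N_H(Q_2) = \langle h^{p+1} \rangle$, which rests on combining the order count from \cref{prop:FrobeniusGroups_max_fusion_and_no_fusion}(a) with the uniqueness of subgroups of each order in a cyclic group.
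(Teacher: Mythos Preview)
Your proof is correct and follows essentially the same approach as the paper: parts (a) and (c) are dispatched as immediate, while (b) is obtained from the semi-direct product structure $G=F\rtimes H$ with $F$ abelian together with \cref{prop:FrobeniusGroups_max_fusion_and_no_fusion}(a). The paper merely states this in one sentence, whereas you have (correctly) unpacked the details, including the identification $N_H(Q_2)=\langle h^{p+1}\rangle$ via uniqueness of subgroups in a cyclic group.
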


\begin{proof}
Assertions (a) and (c) are straightforward from the definitions. 
Assertion (b) follows from the fact that $G$ is a semi-direct product of $F$ by $H$ with $F$ abelian and \cref{prop:FrobeniusGroups_max_fusion_and_no_fusion}(a). 
\end{proof}

\begin{prop} \label{lem:Ordinary_ct_AGL_1_p_squared}\label{lem:Ordinary_ct_y_rtimes_h_p_plus_1}\label{prop:decompositionMatrixAGL_1_p_squared}\label{prop:decomp_matrix_N_2_bar}
Let $\zeta$ be a fixed primitive $m$-th root of unity in $K$ and let $\omega:=\zeta^{(p+1)}$. Then the following assertions hold.
\begin{enumerate}[label=\rm(\alph*),itemsep=1ex]
    \item

\noindent The ordinary character table of $G$ restricted to the $p$-regular conjugacy classes is as given in \cref{table:Ordinary_ct_AGL_1_p_squared}. 

\renewcommand*{\arraystretch}{1.1}
\begin{table}[h]
\begin{tabular}{|c||c|c|}
 \hline 
	&

		\textbf{$1$ } 

	&

	$h^j\ (1\leq j\leq m-1)$
	\\ \hline \hline

	$\chi_a\ (1\leq a\leq m)$
	& $1$
	& $\zeta^{(a-1)j}$
	\\ \hline
    
	$\chi_{m+b} (1\leq b\leq e(m))$
	& $m$
	& $0$
	\\ \hline	
\end{tabular}

    \caption{Ordinary character table of $G$ restricted to the $p$-regular classes.}
    \label{table:Ordinary_ct_AGL_1_p_squared}	
\end{table}

    \item 

     \noindent The ordinary character table of $\overline{N}_G(Q_2)$ restricted to the $p$-regular conjugacy classes is as given in \cref{table:Ordinary_ct_y_rtimes_h_p_plus_1}, 
     where following \cref{prop:decompositionMatrix_Frobenius_groups_all_cases_relevant_for_us} we let $\Irr(\overline{N}_G(Q_2))=\{\theta_1, \ldots , \theta_{d(m)+e(m)} \}$ where for each $1\leq b\leq e(m)$, $\theta_{b+d(m)} := \Ind_{\langle y \rangle}^{\overline{N}_G(Q_2)}(\nu_b)$ for pairwise non-conjugate characters $\nu_b \in \Irr(\langle y \rangle)\setminus \{1_{\langle y \rangle}\}$ and  for each $1\leq a\leq d(m)$ we let  $\theta_a := \Inf_{\overline{N}_G(Q_2)/\langle y\rangle}^{\overline{N}_G(Q_2)}(\xi_a)$  with  $\xi_a\in\Irr(N_H(Q_2))$  as defined in \cref{nota:ordinary_ct_of_cyclic_group}.

     \renewcommand*{\arraystretch}{1.1}
    \begin{table}[ht]
    \captionsetup{width=0.7\linewidth} 
	\begin{tabular}{|c||c|c|}
    \hline 
		&

			\textbf{$1$} 

		&

			$h^{j(p+1)}Q_2\ (1\leq j\leq d(m)-1)$
		\\ \hline \hline

		$\theta_{a}\ (1\leq a \leq d(m))$
		& $1$
		& $\omega^{(a-1)j}$
		\\ \hline

		$\theta_{d(m)+b}$ $(1\leq b\leq e(m))$
		& $d(m)$
		& $0$
		\\ \hline
		
	\end{tabular}
			\caption{Ordinary character table of $\overline{N}_G(Q_2)$ restricted to the $p$-regular conjugacy classes. }\label{table:Ordinary_ct_y_rtimes_h_p_plus_1}	
    \end{table}

    \item 

    Setting $\varphi_a:=\chi_a^{\circ}$ for each $1\leq a\leq m$, then $\IBr_p(G)=\{\varphi_1, \ldots, \varphi_{m}\}$ and $\decmat(G)$ is as given in \cref{table:decomp_matrix_AGL_1_p_squared}.

    \item 

    Setting $\psi_a:=\theta_a^{\circ}$ for each $1\leq a\leq d(m)$, then $\IBr_p(\overline{N}_G(Q_2))=\{\psi_1, \ldots, \psi_{d(m)}\}$ and  $\decmat(\overline{N}_G(Q_2))$ is as given in \cref{table:decomp_matrix_N_2_bar}.

\end{enumerate}

\par\bigskip

\makebox[0.8\textwidth][l]{%
  % Tabelle 1
  \parbox{0.45\textwidth}{%
    \centering
    \scalebox{0.95}{ % Skalierung der Tabelle
\begin{NiceTabular}[nullify-dots,xdots/shorten=4pt]{c|ccccccc}
\hline
 & $\varphi_1$ & $\varphi_2$ & \Cdots & \Cdots & \Cdots & $\varphi_{m-1}$ & $\varphi_m$\\
\hline
$\chi_1$ & $1$ & $0$ & \Cdots & \Cdots & \Cdots  & $0$ & $0$ \\
$\chi_2$ & $0$ & $1$ & \Ddots & & & \Vdots & \Vdots \\
$\chi_3$ & $0$ & $0$ & \Ddots & \Ddots & & \Vdots & \Vdots \\
$\Vdots$ & $\Vdots$ & $\Vdots$ & $\Ddots$ & $\Ddots$ & $\Ddots$ & $\Vdots$ & $\Vdots$ \\
$\Vdots$ & $\Vdots$ & $\Vdots$ &  & $\Ddots$ & $\Ddots$ & $0$ & $0$ \\
$\Vdots$ & $\Vdots$ & $\Vdots$ &  &  & $\Ddots$ & $1$ & $0$ \\
$\chi_m$ & $0$ & $0$ & $\Cdots$ & $\Cdots$ & $\Cdots$ & $0$ & $1$\\
$\chi_{m+1}$ & $1$ & $1$ & $\Cdots$ & $\Cdots$ & $\Cdots$ & $1$ & $1$\\
$\Vdots$ & $\Vdots$ & \Vdots & & & & $\Vdots$ & $\Vdots$\\
$\chi_{m+e(m)}$ & $1$ & $1$ & $\Cdots$ & $\Cdots$ & $\Cdots$ & $1$ & $1$\\
\hline
\end{NiceTabular}
    }
    \captionof{table}{$p$-decomposition matrix of $G$.}
    \label{table:decomp_matrix_AGL_1_p_squared}
  }
  \hspace{0mm}
  % Tabelle 2
  \parbox{0.52\textwidth}{%
    \centering
    \scalebox{0.95}{
      \begin{NiceTabular}[nullify-dots,xdots/shorten=4pt]{c|ccccccc}
      \hline
      & $\psi_1$ & $\psi_2$ & \Cdots & \Cdots & \Cdots & $\psi_{d(m)-1}$ & $\psi_{d(m)}$\\
      \hline
      $\theta_1$ & $1$ & $0$ & \Cdots & \Cdots & \Cdots  & $0$ & $0$ \\
      $\theta_2$ & $0$ & $1$ & \Ddots & & & \Vdots & \Vdots \\
      $\theta_3$ & $0$ & $0$ & \Ddots & \Ddots & & \Vdots & \Vdots \\
      $\Vdots$ & $\Vdots$ & $\Vdots$ & \Ddots & \Ddots & \Ddots & \Vdots & \Vdots \\
      $\Vdots$ & $\Vdots$ & $\Vdots$ &  & \Ddots & \Ddots & $0$ & $0$ \\
      $\Vdots$ & $\Vdots$ & $\Vdots$ &  &  & \Ddots & $1$ & $0$ \\
      $\theta_{d(m)}$ & $0$ & $0$ & \Cdots & \Cdots & \Cdots & $0$ & $1$\\
       $\theta_{d(m)+1}$ & $1$ & $1$ & \Cdots & \Cdots & \Cdots & $1$ & $1$\\
      $\Vdots$ & $\Vdots$ & \Vdots & & & & $\Vdots$ & $\Vdots$\\
      $\theta_{d(m)+e(m)}$ & $1$ & $1$ & \Cdots & \Cdots & \Cdots & $1$ & $1$\\
      \hline
      \end{NiceTabular}
    }
    \captionof{table}{$p$-decomposition matrix of $\overline{N}_G(Q_2)$.}
    \label{table:decomp_matrix_N_2_bar}
  }
}

\end{prop}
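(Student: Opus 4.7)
The plan is to apply \cref{prop:decompositionMatrix_Frobenius_groups_all_cases_relevant_for_us} twice: once to $G$ itself in order to obtain parts~(a) and~(c), and once to $\overline{N}_G(Q_2)$ in order to obtain parts~(b) and~(d). Both are Frobenius groups with cyclic complement and elementary abelian $p$-kernel, which is exactly the setting of that proposition.

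For parts~(a) and~(c), I would invoke \cref{prop:decompositionMatrix_Frobenius_groups_all_cases_relevant_for_us} directly with $r=2$ and kernel $F\cong C_p\times C_p$. Parts~(b) and~(c) of that proposition yield $\IBr_p(G)=\{\varphi_1,\ldots,\varphi_m\}$ with $\varphi_a=\chi_a^\circ$, and show that $\{h^j\mid 0\leq j\leq m-1\}$ is a full set of $p$-regular class representatives; this justifies the column labels of \cref{table:Ordinary_ct_AGL_1_p_squared}. For the entries of that table, the linear values $\chi_a(h^j)=\zeta^{(a-1)j}$ follow from $\chi_a=\Inf_{G/F}^G(\xi_a)$ together with \cref{nota:ordinary_ct_of_cyclic_group}, while for the $\chi_{m+b}=\Ind_F^G(\nu_b)$ one gets $\chi_{m+b}(1)=[G:F]\,\nu_b(1)=m$ and $\chi_{m+b}(h^j)=0$ for $1\leq j\leq m-1$ by the induction formula, since no $G$-conjugate of $h^j\in H\setminus\{1\}$ lies in $F$ (Frobenius property). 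Finally, \cref{table:decomp_matrix_AGL_1_p_squared} is read off from \cref{prop:decompositionMatrix_Frobenius_groups_all_cases_relevant_for_us}(d), which gives the block identity on top (since $\varphi_a=\chi_a^\circ$) and the relation $\chi_{m+b}^\circ=\sum_{j=1}^{m}\varphi_j$ for the all-ones block on the bottom.

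For parts~(b) and~(d), the preliminary step is to check that $\overline{N}_G(Q_2)$ is itself a Frobenius group of the same type. By \cref{lem:Normalisers_AGL_1_p_squared}(b), $\overline{N}_G(Q_2)\cong \langle y\rangle\rtimes N_H(Q_2)$ with $N_H(Q_2)=\langle h^{p+1}\rangle$ cyclic of order $d(m)$, and the fixed-point-free action of $H$ on $F\setminus\{1\}$ restricts to a fixed-point-free action of $N_H(Q_2)$ on $\langle y\rangle\setminus\{1\}$, giving the Frobenius structure. I would then verify that $\omega=\zeta^{p+1}$ is a primitive $d(m)$-th root of unity: since $m=(p+1)d(m)$ one has $\gcd(m,p+1)=p+1$, so the order of $\omega$ equals $m/(p+1)=d(m)$. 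Applying \cref{prop:decompositionMatrix_Frobenius_groups_all_cases_relevant_for_us} with $(m,p^r)$ replaced by $(d(m),p)$ then produces \cref{table:Ordinary_ct_y_rtimes_h_p_plus_1} and \cref{table:decomp_matrix_N_2_bar} by the same arguments as in the previous paragraph, applied verbatim inside $\overline{N}_G(Q_2)$.

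The only point that is not entirely mechanical is the recognition that $\overline{N}_G(Q_2)$ inherits the Frobenius structure together with the correct labelling of characters by a primitive $d(m)$-th root of unity; once this is in place, both the character tables and the decomposition matrices are immediate from \cref{prop:decompositionMatrix_Frobenius_groups_all_cases_relevant_for_us}.
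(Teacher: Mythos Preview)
Your proposal is correct and follows essentially the same approach as the paper: apply \cref{prop:decompositionMatrix_Frobenius_groups_all_cases_relevant_for_us} to $G$ for parts~(a) and~(c), and to the Frobenius group $\overline{N}_G(Q_2)$ for parts~(b) and~(d). Your write-up is in fact slightly more detailed than the paper's (you explicitly verify that $\omega=\zeta^{p+1}$ has order $d(m)$ and that the Frobenius structure passes to $\overline{N}_G(Q_2)$), whereas the paper simply says ``analogous to (a)'' for part~(b) and ``immediate from \cref{prop:decompositionMatrix_Frobenius_groups_all_cases_relevant_for_us}(d)'' for parts~(c) and~(d).
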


\begin{proof}

\begin{enumerate}[label=\rm(\alph{enumi}),itemsep=1ex]
    \item 

For each $1\leq a\leq m$, we have $\chi_a = \Inf_{G/F}^{G}(\xi_a)$ with  $\xi_a\in\Irr(H)$. It follows from \cref{nota:ordinary_ct_of_cyclic_group} that $\chi_a(h^j) = \xi_{a}(h^j)= \zeta^{(a-1)j}$ for each $1\leq j \leq m-1$. Then, as~$\chi_{m+1},\ldots,\chi_{m+d(m)}$ are induced from  linear characters of $F$ to $G$ it is immediate that their degree is $m$ and they take value zero on the non-trivial elements of~$H$. 
%(see e.g. {\cite[Definition 3.1.19]{Linckelmann1}}) 
    \item Analogous to (a) as $\overline{N}_G(Q_2)$ is also a Frobenius group. 
    \item and (d) are immediate from \cref{prop:decompositionMatrix_Frobenius_groups_all_cases_relevant_for_us}(d).
\end{enumerate}

\end{proof}

We now come to our first main result, namely the description of the trivial source character table of~$G$ at $p$. In this case, it follows from the definition, that this is a $3\times 3$-block matrix.

\begin{thm}\label{Thm_Triv_p_AGL_1_p_squared}
		Assume $p$ is an odd prime number and $G=F\rtimes H$ is a Frobenius group with $F\cong C_p\times C_p$ and $H\cong C_m$ where $m$ is an integer such that $m=(p+1)\cdot\mathrm{gcd}(m,p-1)$.
        Then,
        with the notation introduced in \cref{nota:MaximalFusionCase} and \cref{lem:Ordinary_ct_AGL_1_p_squared}, 
        the trivial source character table $\Triv_p(G)=[T_{i,v}]_{1\leq i,v\leq 3}$ of $G$ is given as described below.
		\begin{enumerate}[label={\rm(\alph*)}]
        \item The labelling of the columns may be chosen as follows:
                \begin{enumerate}[label={\rm(\arabic*)}]
                   \item 
                   the columns of $T_{i,1}$ $(1\leq i\leq 3)$ may be  labelled by the elements of $H$;
                   \item 
                   the columns of $T_{i,2}$ $(1\leq i\leq 3)$ may be  labelled by the elements of $N_H(Q_2)$;
                   \item 
                   the columns of $T_{i,3}$ $(1\leq i\leq 3)$ may be  labelled by the elements of $H$. 
               \end{enumerate}
            \item The ordinary characters of the trivial source modules are as follows:
               \begin{enumerate}[label={\rm(\arabic*)}]
                   \item
                   $\{\chi^{}_{\widehat{M}}\mid  M\in \TS(G;Q_1)\}=\{\chi+\sum_{\psi\in\Irr(G)\setminus\Lin(G)}\psi\mid \chi\in\Lin(G)\}$;
                   \item 
                   $\{\chi^{}_{\widehat{M}}\mid  M\in \TS(G;Q_2)\}
                   =
                   \{\,
                   \sum\limits_{\chi\in\Irr(G)\setminus\Lin(G)}^{}
                   \chi\,
                   +
                   \sum\limits_{\substack{\lambda\in\Lin(G),\\
                   \Res^G_{N_H(Q_2)}(\lambda)=\theta}}^{}\lambda
                   \mid \theta\in \Irr(N_H(Q_2))
                   \}
                   $;
                   \item 
                   $\{\chi^{}_{\widehat{M}}\mid  M\in \TS(G;Q_3)\}=\Lin(G)$. 
               \end{enumerate}
               As $\Lin(G)=\Inf_{H}^G(\Irr(H))$ we may choose the labelling of the rows of $T_{1,v}$ and $T_{3,v}$ ${(1\leq v\leq 3)}$ to match that of $X(H)$. Similarly,  we may choose the labelling of the rows of $T_{2,v}$ $(1\leq v\leq 3)$ to match that of $X(N_H(Q_2))$.
			\item  
            With the labelling of the rows and of the columns given in {\rm(a)} and {\rm(b)} we have:
            \begin{enumerate}[label={\rm(\arabic*)}]
                \item 
            $T_{1,2}=T_{1,3}=T_{2,3}={\mathbf{0}}$\,;
   			\item $T_{1,1}=X(H) + \left(\begin{smallmatrix}
                p^2-1 & 0 & \cdots & 0\\
                \vdots & \vdots & \ddots & \vdots\\
                p^2-1 & 0 & \cdots & 0 \end{smallmatrix}\right)$;
             \item $T_{2,2}=X(N_H(Q_2)) +\left(\begin{smallmatrix} 
             p-1 & 0 & \cdots & 0\\
             \vdots & \vdots & \ddots & \vdots\\
             p-1 & 0 & \cdots & 0 \end{smallmatrix}\right)$;
            \item $T_{2,1}=(A_{M,s})_{M\in\TS(G;Q_2), s\in H}$  where $A_{M,s}=0$ provided  $s\in H\setminus N_H(Q_2)$ and  
            $$(A_{M,s})_{M\in\TS(G;Q_2), s\in N_H(Q_2)}=(p+1)\cdot T_{2,2}\,;$$
            \item  $T_{3,2}=(B_{M,s})_{M\in\TS(G;Q_3), s\in N_H(Q_2) }$  where
              $B_{M,s} = (T_{1,3})_{M,s}$ for each $M\in\TS(G;Q_3)$ and each $s\in N_H(Q_2)$;  
            \item
            $T_{3,1}=T_{3,3} = X(H)$.
            \end{enumerate}
		\end{enumerate}
  \end{thm}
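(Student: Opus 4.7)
Since $Q_1 = \{1\} < Q_2 < Q_3 = F$ are representatives of the three classes of $p$-subgroups of $G$, the table $\Triv_p(G)$ is a $3\times 3$ block matrix and the vanishing assertions (c)(1) are immediate from \cref{lem:PropTrivpG}(a). To fix the row labels and to obtain the character formulas in (b), I would apply \cref{Proposition1AbelianGroupsTSModules} at each vertex, using the normalisers computed in \cref{lem:Normalisers_AGL_1_p_squared}: this yields $\TS(G;Q_1) = \{\text{PIMs of }kG\}$ (indexed by $\Irr(H)$, with characters read off the decomposition matrix in \cref{prop:decompositionMatrixAGL_1_p_squared}, giving (b)(1)); $\TS(G;Q_3) = \Lin(G)$ (by \cref{lem:Simples_G_mod_Op(G)}, giving (b)(3)); and $\TS(G;Q_2) = \{M_\theta \ :\ \theta\in\Irr(N_H(Q_2))\}$, where $M_\theta := \Ind_{N_G(Q_2)}^G \Inf_{\overline{N}_G(Q_2)}^{N_G(Q_2)}(L_\theta)$ and $L_\theta$ is the PIM of the Frobenius group $\overline{N}_G(Q_2)$ labelled by $\theta$. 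The column labels in (a) are justified by applying \cref{prop:decompositionMatrix_Frobenius_groups_all_cases_relevant_for_us}(c) to the Frobenius groups $G$ and $\overline{N}_G(Q_2)$, together with $\overline{N}_G(Q_3) \cong H$.

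The diagonal blocks come from \cref{lem:PropTrivpG}(b). Since $\overline{N}_G(Q_3) \cong H$ is a $p'$-group, $T_{3,3} = X(H)$. For $i = 1, 2$, the two-block shape of the decomposition matrix (identity on linear rows, all-ones on induced rows) combines with the fact from \cref{prop:decompositionMatrix_Frobenius_groups_all_cases_relevant_for_us} that induced characters of a Frobenius group vanish on all non-identity $p'$-classes; multiplying the decomposition matrix by the character table then yields $X(H)$ (resp.\ $X(N_H(Q_2))$) plus the shift $p^2-1$ (resp.\ $p-1$) in the first column, proving (c)(2) and (c)(3). The block $T_{3,1}$ follows from \cref{lem:PropTrivpG}(c): the linear character of each element of $\TS(G;Q_3)$ evaluated on $H$ reproduces $X(H)$, which together with $T_{3,3}$ gives (c)(6). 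For $T_{3,2}$ in (c)(5), I observe that $Q_2 \leq F$ acts trivially on every one-dimensional $kG$-module $M$ of character $\lambda$, so $M^{Q_2} = M$; since $\dim_k M = 1$ and $\operatorname{char} k = p$, the relative trace $\tr^{Q_2}_{\{1\}}$ vanishes. Hence $M[Q_2]$ is the one-dimensional $k\overline{N}_G(Q_2)$-module whose Brauer character at $s \in N_H(Q_2)$ equals $\lambda(s)$, matching the corresponding entry of $T_{3,1}$.

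The delicate step is the block $T_{2,1}$. Its entries are $\chi_{\widehat{M_\theta}}(s) = \Ind_{N_G(Q_2)}^G \bigl(\Inf_{\overline{N}_G(Q_2)}^{N_G(Q_2)}(\Phi_\theta)\bigr)(s)$ at $s \in H$, where $\Phi_\theta = \theta + \sum_c \theta_{d(m)+c}$ is the PIM character of $\overline{N}_G(Q_2)$ from \cref{prop:decomp_matrix_N_2_bar}. For $s \in H \setminus N_H(Q_2)$ the induction formula produces zero: $N_G(Q_2) = F \rtimes N_H(Q_2)$ is itself a Frobenius group whose non-trivial $p'$-elements form the set $\bigcup_{f \in F} f\bigl(N_H(Q_2)\setminus\{1\}\bigr)f^{-1}$, and since $H$ is a set of $p'$-class representatives of $G$ by \cref{prop:decompositionMatrix_Frobenius_groups_all_cases_relevant_for_us}(c), no $G$-conjugate of such an $s$ lies in $N_G(Q_2)$. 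For $s \in N_H(Q_2)\setminus\{1\}$, using $C_G(s) = H$ (from the Frobenius-complement property via \cref{lem:Frobenius_General_Properties}) and the explicit list above, the $G$-class of $s$ meets $N_G(Q_2)$ in exactly $\{fsf^{-1}\ :\ f \in F\}$ (size $p^2$); summing $\Inf(\Phi_\theta)$ over this set via the induction formula produces $(p+1)\theta(s)$, which matches $(p+1)(T_{2,2})_{M_\theta,s}$ and proves (c)(4). The value at $s=1$ matches by a direct dimension count ($\dim M_\theta = [G:N_G(Q_2)]\cdot\Phi_\theta(1) = (p+1)p$), and the decomposition in (b)(2) follows either by reading off these values or, more directly, via Frobenius reciprocity plus the isometry of inflation, together with a short Mackey argument that identifies the multiplicity of each non-linear $\chi_{m+c}$.

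The main obstacle is the $T_{2,1}$ computation: it requires combining the Frobenius structures of $G$ and of $N_G(Q_2)$ to control the intersection of $G$-conjugacy classes with $N_G(Q_2)$, and then evaluating the inflated PIM character $\Inf(\Phi_\theta)$ on these intersections. Establishing (b)(2) for the non-linear characters additionally calls for a Mackey-type restriction argument on $\chi_{m+c}|_{N_G(Q_2)}$, exploiting that $F\unlhd G$ and that the $p+1$ conjugates of $Q_2$ are in bijection with the $p+1$ index-$p$ subgroups of $F$ (so that precisely one of the $p+1$ Mackey summands picks up multiplicity $1$). Once these reductions are made, the remaining identities reduce to straightforward verifications using \cref{lem:Ordinary_ct_AGL_1_p_squared} and \cref{prop:decompositionMatrix_Frobenius_groups_all_cases_relevant_for_us}.
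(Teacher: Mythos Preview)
Your proposal is correct and follows the same overall architecture as the paper's proof: parts (a), (b)(1), (b)(3), (c)(1)--(3), (c)(6) are handled identically, and for (c)(5) your direct computation that $M[Q_2]=M$ (via $M^{Q_2}=M$ and $\tr^{Q_2}_{\{1\}}=0$ on a one-dimensional module in characteristic~$p$) is exactly the content of the result from Linckelmann that the paper cites.

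The one place where the routes genuinely diverge is the treatment of the vertex $Q_2$, i.e.\ (b)(2) and (c)(4). The paper first proves (b)(2) structurally: it shows that $\Ind_{N_2}^G\circ\Inf_{\overline N_2}^{N_2}$ sends $\Irr(\overline N_2)\setminus\Lin(\overline N_2)$ bijectively onto $\Irr(G)\setminus\Lin(G)$ (using transitivity of induction and an inertia/kernel argument), and handles the linear part via Clifford--Gallagher, obtaining in particular $\Res^G_{N_2}\Ind_{N_2}^G\Inf(\theta_a)=(p+1)\Inf(\theta_a)$; (c)(4) then drops out as a corollary. You go in the other direction: you compute (c)(4) first by analysing how the $G$-class of $s\in H$ meets $N_G(Q_2)$ (empty for $s\notin N_H(Q_2)$, the full $F$-orbit $\{fsf^{-1}:f\in F\}$ of size $p^2$ otherwise) and plugging into the induction formula, and then recover (b)(2) via Frobenius reciprocity and a Mackey computation of $\langle\Inf(\Phi_\theta),\Res^G_{N_2}\chi_{m+c}\rangle$ exploiting that the $p+1$ cosets in $G/N_2$ permute the $p+1$ order-$p$ subgroups of $F$ transitively. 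Both arguments work; yours is more hands-on with the induction formula and avoids invoking Clifford--Gallagher, while the paper's bijection argument gives a cleaner conceptual reason why every non-linear irreducible of $G$ appears exactly once in each $\chi_{\widehat{M_\theta}}$.
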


\noindent To help visualize this result, the table version of \cref{Thm_Triv_p_AGL_1_p_squared} for $G=\AGL_1(p^2)$ is given in Appendix~\ref{app:tables}.

\begin{proof}
Assertion (a) is straightforward from \cref{conv:tsctbl}, \cref{prop:decompositionMatrix_Frobenius_groups_all_cases_relevant_for_us}(c) and  \cref{lem:Normalisers_AGL_1_p_squared}.
Next, we prove Assertion (b).  
To simplify, in this proof we set $N_2:=N_G(Q_2)$ and $\overline{N}_2:=\overline{N}_G(Q_2)$. 
\begin{enumerate}[label=\rm(\arabic*), leftmargin=8mm]
    \item The ordinary characters of the trivial source modules in $\TS(G;Q_1)$ are the ordinary characters of the PIMs of $kG$ and can be read off from the decomposition matrix in \cref{table:decomp_matrix_AGL_1_p_squared}. The claim follows. 
    \item

    Let $M\in\TS(G;Q_2)$. By the bijections in Subsection~\ref{ssec:DefTSCT} there exists a unique PIM $P_{\psi_a}$~of $k\overline{N}_2$ with $1\leq a\leq d(m)$     such that $M$ is the Green correspondent of the inflated module $\Inf_{N_2/\langle x\rangle}^{N_2} (P_{\psi_a})$. By \cref{Proposition1AbelianGroupsTSModules}, the induced module
$$\Ind_{N_2}^{G} \Inf_{N_2/\langle x \rangle}^{N_2} (P_{\psi_a})$$
is indecomposable,  % for each $1\leq i \leq p-1$. 
so it must be $M$. Next, we compute the constituents of the ordinary character of this module. First, by the above, 
\[
    \chi^{}_{\widehat{M}}
    =
    \Ind_{N_2}^{G} \Inf_{N_2/\langle x \rangle}^{N_2}  (\Phi_{\psi_a}),
\]
and by \cref{prop:decomp_matrix_N_2_bar}(d) the ordinary character of $P_{\psi_a}$ is
$$\Phi_{\psi_a} = \theta_a + \sum\limits_{b=1}^{e(m)} \theta_{d(m)+b}\,.$$
%Now, we compute the constituents of $\chi_{\widehat{M}}$\,.
Now, we claim that there is a bijection
\begin{center}
\begin{tabular}{cccl}
       $\Ind_{N_2}^{G}\Inf_{\overline{N}_2}^{N_2}$\,:            &   $\Irr(\overline{N}_2)\setminus\Lin(\overline{N}_2)$      & $\lra$ &    $\Irr(G)\setminus\Lin(G)$.   \\
%                         &   $\theta_{d(m)+b}$      & $\mapsto$     &   $\chi_{m+b}$   
\end{tabular}
\end{center}

\noindent Given $u\in \{1, \ldots , e(m)\}$, as $N_2$ is a Frobenius group, we can write $\Inf_{\overline{N}_2}^{N_2} (\theta_{d(m) + u}) = \Ind_F^{N_2}(\nu_{u}) =: I_u$ for some character $\nu_u\in \Irr(F)\setminus \{1_F\}$ by \cref{prop:Ordinary_Irreducible_Character_Of_Frobenius_Groups}. As $G$ is also a Frobenius group, by transitivity of induction, we obtain that $\Ind_{N_2}^{G} \Inf_{\overline{N}_2}^{N_2} (\theta_{d(m)+u})$ is irreducible. Hence, the map is well-defined. As both sets have the same cardinality, it remains to show that the map is injective. Assuming $\theta_{d(m)+u}\neq \theta_{d(m)+w}$ for $u,w\in \{1, \ldots , e(m)\}$, then $I_u\neq I_w$. By the above, $\Ind_{N_2}^G(I_u)=: \chi_u\in \Irr(G)\setminus \Lin(G)$. Then, by Clifford theory, the inertial subgroup of $I_u$ is $N_2$  and 
$\Res_{N_2}^G(\chi_u) = \sum\limits_{g\in [G/N_2]}^{}{I_u}^{(g)}$.
Moreover, we can choose the non-trivial coset representatives $g\in [G/N_2]$ in such a way that $g=:\ell\in H$.
Now, we show that  $I_w$ is not a constituent of $\Res_{N_2}^G(\chi_u)$. As $\ell\notin N_2$ there exists $q\in Q_2 \setminus \{1\}$ such that $\ell q{\ell}^{-1} \in F\setminus Q_2$. Since both $I_u$ and $I_w$ are induced from $F$, they do not have $F$ in their kernels. Hence, ${I_u}^{(\ell)}=I_w$ is not possible for such an $\ell$, as $\ker(I_u)\cap F =Q_2 = \ker(I_w)\cap F$. Hence, $\Ind_{N_2}^G(I_u)\neq \Ind_{N_2}^G(I_w)$ and  the map is injective.\\
It follows that 
$$\Ind_{N_G(Q_2)}^{G}\Inf_{\overline{N}_2}^{N_2}(\sum\limits_{b=1}^{e(m)} \theta_{d(m)+b}) = \sum\limits_{\chi\in\Irr(G)\setminus \Lin(G)}^{} {\chi}\,.$$
Now we compute $\Ind_{N_2}^{G}\Inf_{\overline{N}_2}^{N_2} (\theta_a)$. 
Clearly, the degree of this character is        
    \[
    |G:N_2|=|H:N_H(Q_2)|=\frac{m}{d(m)}=p+1<m\,,
    \]
so by \cref{lem:Ordinary_ct_AGL_1_p_squared}(a), it must be a sum of $p+1$ linear characters of $G$. Now,  Clifford's theorem together with  Gallagher's theorem  (see \cite[19.3 Theorem and 19.5 Theorem]{HuppertCharacterTheory}) tell us that we can write
    \[
    \Ind_{N_2}^{G}\Inf_{\overline{N}_2}^{N_2} (\theta_a)=\sum_{c=1}^{p+1}\lambda_c
    \]
with  $\lambda_1,\ldots,\lambda_{p+1}\in\Lin(G)$ pairwise distinct and 
    \[
    \Res^G_{N_2}\left(\Ind_{N_2}^{G}\Inf_{\overline{N}_2}^{N_2} (\theta_a)\right)
    =
    (p+1)\cdot \Inf_{\overline{N}_2}^{N_2} (\theta_a)\,,
    \]
implying that $\Res^G_{N_2}(\lambda_c)=\Inf_{\overline{N}_2}^{N_2} (\theta_a)$ for every $1\leq c\leq p+1$. 
Moreover, identifying $N_2/F$ with $N_H(Q_2)$, then  \cite[\S1.1.3(2.e.)]{BoucBisetBook} yields $\Res^{N_2}_{N_H(Q_2)}\left(\Inf_{\overline{N}_2}^{N_2} (\theta_a)\right)=\theta_a$. 
A counting argument shows that any linear character of $G$ must be a constituent of an induced character $\Ind_{N_2}^{G}\Inf_{\overline{N}_2}^{N_2} (\theta_{a_0})$ for some index $1\leq a_0\leq d(m)$. Thus, we have proved that  
    \[
        \Ind_{N_2}^{G}\Inf_{\overline{N}_2}^{N_2} (\theta_a)=\sum_{c=1}^{p+1}\lambda_c = \sum\limits_{\substack{\lambda\in\Lin(G),\\
                   \Res^G_{N_H(Q_2)}(\lambda)=\theta_a}}^{}\lambda\,, 
    \]
as required.

    \item \cref{lem:Normalisers_AGL_1_p_squared}(c) and \cref{lem:Simples_G_mod_Op(G)}(b) yield
    $$\{\chi^{}_{\widehat{M}}\mid M\in\TS(G;Q_3)\}=\Inf_{G/Q_3}^{G}(\Irr(G/Q_3))\subseteq \Lin(G)$$
    and the last inclusion is an equality by \cref{lem:Frobenius_General_Properties}(d) as $|\Lin(G)|=|G/[G,G]|$. 
\end{enumerate}
\noindent Finally, we prove Assertion (c). 
\begin{enumerate}[label=\rm(\arabic*), leftmargin=8mm]
    \item The claim is immediate from \cref{lem:PropTrivpG}(a).
    \item By \cref{lem:PropTrivpG}(b), we have $$T_{1,1}= \Phi_p(G)= \decmat(G)^{t}\cdot X(G,p')\,.$$
    Using 
    \cref{lem:Ordinary_ct_AGL_1_p_squared}(a) and (c) and noticing that with our choice of the labelling for the rows and the columns, 
    $X(G,p')$ is a block matrix of type
        \[
        \begin{pNiceArray}{ccw{c}{1cm}c}[margin]\Block{2-4}<\large>{X(H)} & & & \\& & &  \\
        \hline
        m & 0 &  \Cdots & 0 \\
        \vdots & \vdots & & \vdots\\
        m & 0 &  \Cdots & 0
        \end{pNiceArray} 
        \in K^{(m+e(m)) \times m},
        \]
    the product is easily calculated to be as claimed, since $m\cdot e(m)=p^2-1$. 
    \item Similarly, by \cref{lem:PropTrivpG}(b), we have $$T_{2,2}= \Phi_p(\overline{N}_2)= \decmat(\overline{N}_2)^{t}\cdot X(\overline{N}_2,p').$$
    This product is easily computed from \cref{lem:Ordinary_ct_AGL_1_p_squared}(b) and (d), noticing that with our choice of the labelling for the rows and the columns, 
    $X(\overline{N}_2,p')$ is a block matrix of type
    \[
    \begin{pNiceArray}{ccw{c}{1cm}c}[margin]
        \Block{2-4}<\large>{X(N_H(Q_2))} & & & \\
        & & &  \\
        \hline
        d(m) & 0 &  \Cdots & 0 \\
        \vdots & \vdots & & \vdots\\
        d(m) & 0 &  \Cdots & 0 
    \end{pNiceArray}
    \in K^{(d(m)+e(m)) \times d(m)}
    \] 
    and $d(m)\cdot e(m)=p-1$. 
    \item By \cref{lem:PropTrivpG}(c), the entries of $T_{2,1}$  %in~\cref{table:TSCT_AGL_1_p_squared_T_i_1} 
    are obtained by evaluating the ordinary characters of the trivial source modules with vertex $Q_2$ obtained in (b)(2) at the elements of $H$. 
    As the latter characters are induced from $N_2=F\rtimes N_H(Q_2)$, it is clear that they take value zero on $H\setminus N_H(Q_2)$. In addition, it is also clear from Part (b)(2) (and its proof) that the values of these characters at the elements of $N_H(Q_2)$ are the values of the characters of the corresponding PIMs of  $\overline{N}_2$ multiplied by the index $|G:N_2|=p+1$\,.
    \item By definition, we have $T_{3,2} = \big[ \tau_{Q_{2},s}^{G}([M])\big]_{M\in \TS(G;Q_{3}), s\in [\overline{N}_G(Q_2)]_{p^\prime} }\,.$
     So, let $M\in\TS(G;Q_3)$, which is the set of all $1$-dimensional $kG$-modules.  By \cref{lem:Normalisers_AGL_1_p_squared} and \cref{prop:decompositionMatrix_Frobenius_groups_all_cases_relevant_for_us}(c), we may assume that $[N_2]_{p^\prime}= N_H(Q_2)$ and let $t\in [N_2]_{p^\prime}$. By definition, 
     $$\tau_{Q_2, t}^G([M]) = \varphi_{M[Q_2]}(t).$$ 
     Because $Q_2\leq Q_3=F \unlhd G$, it follows from  \cite[Proposition 5.10.4]{Linckelmann1} that $M[Q_2]=\Res^G_{N_2}(M)$. Hence, $\varphi_{M[Q_2]}(t) = \chi_{\widehat{M}}(t)$, proving that the values of $T_{3,2}$ are obtained from $T_{3,1}$ by restriction to the columns labelled by the elements of $N_H(Q_2)$. 
    \item On the one hand, since $\overline{N}_G(Q_3)\cong H$,  which is a $p^\prime$-group, by \cref{lem:PropTrivpG}(b), we have 
    $$T_{3,3}= \Phi_p(H) = \decmat(H)^{t}\cdot X(H,p') = X(H),$$ as claimed.
    On the other hand, by \cref{lem:PropTrivpG}(c) we have 
$$T_{3,1}=\big( \chi^{}_{\widehat{M}}(s)\big)_{M\in \TS(G;Q_{3}), s\in [G]_{p^\prime} }.$$
 Therefore, it follows from Part (a)(3) and Part (b)(3) that $T_{3,1}=X(H)$.  
\end{enumerate}  

\end{proof}

\begin{exmp}
Let $G$ be the Frobenius group $(C_3\times C_3)\rtimes C_{8}$ of order $72$ with maximal fusion pattern, i.e. $G$ has precisely one conjugacy class of subgroups of order $3$. 
It follows that we have three conjugacy classes of $3$-subgroups of $G$, namely
$$Q_1=\{1\},\quad Q_2\cong C_3,\quad  Q_3\cong C_3\times C_3.$$
Notice that $G$ is isomorphic to the group labelled by\ [ 72, 39 ]\ in GAP's SmallGroups library, see~\cite{gap}. The ordinary character table of $G$ is as given in \cref{Ordinary_ct_FrobGroup_C3_C3_C8}, where $\zeta_8 := \exp(\frac{2\pi i}{8})$.
%denotes a primitive $8$-th root of unity.\par
\begin{center}
\begin{table}[h]
\scalebox{0.84}{
\begin{tabular}{@{}l@{}l@{}l@{}}
\hline
\(\begin{array}{|l|ccccccccc|}
  & 1a & 8a & 4a & 8b & 2a & 8c & 4b & 8d & 3a\\ \hline
\chi_{1} & 1 & 1 & 1 & 1 & 1 & 1 & 1 & 1 & 1\\
\chi_{2} & 1 & \zeta_8 & \zeta_8^2 & \zeta_8^{3} & -1  & -\zeta_8 & -\zeta_8^2 & -\zeta_8^{3} & 1\\
\chi_{3} & 1 & \zeta_8^2 & -1 & -\zeta_8^2 & 1 & \zeta_8^2 & -1 & -\zeta_8^2 & 1\\
\chi_{4} & 1 & \zeta_8^{3} & -\zeta_8^2 & \zeta_8 & -1 & -\zeta_8^{3} & \zeta_8^2 & -\zeta_8 & 1\\
\chi_{5} & 1 & -1 & 1 & -1 & 1 & -1 & 1 & -1 & 1\\
\chi_{6} & 1 & -\zeta_8 & \zeta_8^2 & -\zeta_8^{3} & -1 & \zeta_8 & -\zeta_8^2 & \zeta_8^{3} & 1\\
\chi_{7} & 1 & -\zeta_8^2 & -1 & \zeta_8^2 & 1 & -\zeta_8^2 & -1 & \zeta_8^2 & 1\\
\chi_{8} & 1 & -\zeta_8^{3} & -\zeta_8^2 & -\zeta_8 & -1 & \zeta_8^{3} & \zeta_8^2 & \zeta_8 & 1\\
\chi_{9} & 8 & 0 & 0 & 0 & 0 & 0 & 0 & 0 & -1\\
\hline
\end{array}\)\\
\end{tabular}
}
\caption{Ordinary character table of $ (C_3\times C_3)\rtimes C_8$.}
\title{}
\label{Ordinary_ct_FrobGroup_C3_C3_C8}
\end{table}

\end{center}
The trivial source character table $\Triv_3(G)$ is as given in \cref{TSCT_FrobGrp_72}. Following our conventions we label the columns of $\Triv_3(G)$ with $3^\prime$-elements in $N_v$ instead of $\overline{N}_v$\ $(1\leq v\leq 3)$. 

\begin{center}
\begin{table}[h]
\scalebox{0.66}{
\begin{tabular}{@{}l@{}l@{}l@{}l@{}l@{}l@{}l@{}l@{}l@{}l@{}}
\(\begin{array}{|l|cccccccc|cc|cccccccc|}
\hline
\textup{Normalisers}\ N_i & \multicolumn{8}{c|}{N_{1}\cong (C_3\times C_3)\rtimes C_8} & \multicolumn{2}{c|}{N_{2}\cong (C_3\times C_3)\rtimes C_2} & \multicolumn{8}{c|}{N_{3}\cong (C_3\times C_3)\rtimes C_8}\\ \hline
\textup{Representatives}\ n_j\ \in\ N_i & 1a & 8a & 4a & 8b & 2a & 8c & 4b & 8d & 1a & 2a & 1a & 8a & 4a & 8b & 2a & 8c & 4b & 8d\\ \hline
\chi_{1} + \chi_{9} & 9  & 1 & 1 & 1 & 1 & 1 & 1 & 1 & 0 & 0 & 0 & 0 & 0 & 0 & 0 & 0 & 0 & 0\\
\chi_{2} + \chi_{9} & 9 & \zeta_8 & \zeta_8^2 & \zeta_8^{3} & -1  & -\zeta_8 & -\zeta_8^2 & -\zeta_8^{3} & 0 & 0 & 0 & 0 & 0 & 0 & 0 & 0 & 0 & 0\\
\chi_{3} + \chi_{9} & 9 & \zeta_8^2 & -1 & -\zeta_8^2 & 1 & \zeta_8^2 & -1 & -\zeta_8^2 & 0 & 0 & 0 & 0 & 0 & 0 & 0 & 0 & 0 & 0\\
\chi_{4} + \chi_{9} & 9 & \zeta_8^{3} & -\zeta_8^2 & \zeta_8 & -1 & -\zeta_8^{3} & \zeta_8^2 & -\zeta_8 & 0 & 0 & 0 & 0 & 0 & 0 & 0 & 0 & 0 & 0\\
\chi_{5} + \chi_{9} & 9 & -1 & 1 & -1 & 1 & -1 & 1 & -1 & 0 & 0 & 0 & 0 & 0 & 0 & 0 & 0 & 0 & 0\\
\chi_{6} + \chi_{9} & 9 & -\zeta_8 & \zeta_8^2 & -\zeta_8^{3} & -1 & \zeta_8 & -\zeta_8^2 & \zeta_8^{3} & 0 & 0 & 0 & 0 & 0 & 0 & 0 & 0 & 0 & 0\\
\chi_{7} + \chi_{9} & 9 & -\zeta_8^2 & -1 & \zeta_8^2 & 1 & -\zeta_8^2 & -1 & \zeta_8^2 & 0 & 0 & 0 & 0 & 0 & 0 & 0 & 0 & 0 & 0\\
\chi_{8} + \chi_{9} & 9 & -\zeta_8^{3} & -\zeta_8^2 & -\zeta_8 & -1 & \zeta_8^{3} & \zeta_8^2 & \zeta_8 & 0 & 0 & 0 & 0 & 0 & 0 & 0 & 0 & 0 & 0\\
 \hline
\chi_{1} + \chi_{3} + \chi_{5} + \chi_{7} + \chi_{9} & 12 & 0 & 0 & 0 & 4 & 0 & 0 & 0 & 3 & 1 & 0 & 0 & 0 & 0 & 0 & 0 & 0 & 0\\
\chi_{2} + \chi_{4} + \chi_{6} + \chi_{8} + \chi_{9} & 12 & 0 & 0 & 0 & -4 & 0 & 0 & 0 & 3 & -1 & 0 & 0 & 0 & 0 & 0 & 0 & 0 & 0\\
 \hline
\chi_{1} & 1 & 1 & 1 & 1 & 1 & 1 & 1 & 1 & 1 & 1 & 1 & 1 & 1 & 1 & 1 & 1 & 1 & 1\\
\chi_{2} & 1 & \zeta_8 & \zeta_8^2 & \zeta_8^{3} & -1  & -\zeta_8 & -\zeta_8^2 & -\zeta_8^{3} & 1 & -1 & 1 & \zeta_8 & \zeta_8^2 & \zeta_8^{3} & -1  & -\zeta_8 & -\zeta_8^2 & -\zeta_8^{3}\\
\chi_{3} & 1 & \zeta_8^2 & -1 & -\zeta_8^2 & 1 & \zeta_8^2 & -1 & -\zeta_8^2 & 1 & 1 & 1 & \zeta_8^2 & -1 & -\zeta_8^2 & 1 & \zeta_8^2 & -1 & -\zeta_8^2\\
\chi_{4} & 1 & \zeta_8^{3} & -\zeta_8^2 & \zeta_8 & -1 & -\zeta_8^{3} & \zeta_8^2 & -\zeta_8 & 1 & -1 & 1 & \zeta_8^{3} & -\zeta_8^2 & \zeta_8 & -1 & -\zeta_8^{3} & \zeta_8^2 & -\zeta_8\\
\chi_{5} & 1 & -1 & 1 & -1 & 1 & -1 & 1 & -1 & 1 & 1 & 1 & -1 & 1 & -1 & 1 & -1 & 1 & -1\\
\chi_{6} & 1 & -\zeta_8 & \zeta_8^2 & -\zeta_8^{3} & -1 & \zeta_8 & -\zeta_8^2 & \zeta_8^{3} & 1 & -1 & 1 & -\zeta_8 & \zeta_8^2 & -\zeta_8^{3} & -1 & \zeta_8 & -\zeta_8^2 & \zeta_8^{3}\\
\chi_{7} & 1 & -\zeta_8^2 & -1 & \zeta_8^2 & 1 & -\zeta_8^2 & -1 & \zeta_8^2 & 1 & 1 & 1 & -\zeta_8^2 & -1 & \zeta_8^2 & 1 & -\zeta_8^2 & -1 & \zeta_8^2\\
\chi_{8} & 1 & -\zeta_8^{3} & -\zeta_8^2 & -\zeta_8 & -1 & \zeta_8^{3} & \zeta_8^2 & \zeta_8 & 1 & -1 & 1 & -\zeta_8^{3} & -\zeta_8^2 & -\zeta_8 & -1 & \zeta_8^{3} & \zeta_8^2 & \zeta_8\\
\hline
\end{array}\)\end{tabular}
}
\caption{Trivial source character table of $(C_3\times C_3)\rtimes C_8$ at $p=3$.}
\label{TSCT_FrobGrp_72}
\title{}
\end{table}
\end{center}
\end{exmp}

        %%%--------------------------------------------------------------------------------------------------------
%%%-----------------------                       Section                    ------------------------------
%%%-------------------------------------------------------------------------------------------------------

\vspace{8mm}

\section{The minimal fusion case}
\label{sec:NO_FUSION}

We now turn to the computation of the trivial source character tables of metabelian Frobenius groups with Frobenius kernel $F\cong C_p\times C_p$ and cyclic Frobenius complement $H\cong C_{m}$ such that no two distinct $p$-subgroups of~$G$ of order $p$ are $G$-conjugate. Recall from \cref{prop:FrobeniusGroups_max_fusion_and_no_fusion}(b) that $|H|$ divides $p-1$ and that $H$ acts on $F$ by raising each element to a power of itself in this case. Moreover, we have $m>1$.

\begin{nota}\label{nota:MinimalFusionCase}
Throughout this section, we adopt the following notation.
We choose a generator~$h$ of $C_{p-1}$ and let $H:=\langle h^{a(m)} \rangle$, where $a(m):=\frac{p-1}{m}$. We let $\{x,y\}$ be a set of generators for~$F$. By~\cref{prop:FrobeniusGroups_max_fusion_and_no_fusion}, we can choose the following set of representatives for the $G$-conjugacy classes of $p$-subgroups of $G$:
\begin{align*}
Q_1 &:= \TrivialGroup,\\
Q_i &:= \langle x\cdot y^{i-2}\rangle \quad (2\leq i\leq p+1),\\
Q_{p+2} &:= \langle y\rangle,\\
Q_{p+3} &:= F.
\end{align*}
\noindent Then, up to $G$-conjugation, the lattice of subgroups of $G$ of order $p$ is as given below.

	$$\xymatrix{ & & & Q_{p+3} & & & \\
		Q_2\ar@{-}[rrru] & Q_3\ar@{-}[rru] & Q_4\ar@{-}[ru] & \cdots & Q_p\ar@{-}[lu] & Q_{p+1}\ar@{-}[llu] & Q_{p+2}\ar@{-}[lllu] \\
		 & & & Q_1 \ar@{-}[lllu] \ar@{-}[llu] \ar@{-}[lu] \ar@{-}[ru] \ar@{-}[rru] \ar@{-}[rrru] & & & }$$
\noindent As in \cref{prop:decompositionMatrix_Frobenius_groups_all_cases_relevant_for_us}, we let $\Irr(G)=\{\chi_1, \ldots , \chi_{m+(p+1)\cdot a(m)}\}$ where for each $1\leq a\leq m$ we set~$\chi_a := \Inf_{G/F}^{G}(\xi_a)$  with  $\xi_a\in\Irr(H)=\Lin(H)$  as defined in \cref{nota:ordinary_ct_of_cyclic_group}, and 
$$\{\chi_{m+1},\ldots, \chi_{m+(p+1)\cdot a(m)}\}  = \{ \Ind_{F}^{G}(\nu)\ |\ \nu\in T\}$$
where $T$ is a set of representatives for the conjugation action of $G$ on $\Irr(F)\setminus \{1_{F}\}$\,.
\end{nota}

\begin{lem}\label{lem:Normalisers_MinimalFusionCase}
With the notation introduced in \cref{nota:MinimalFusionCase} the following assertions hold:
\begin{enumerate}[label=\rm(\alph{enumi}),itemsep=1ex]
\item $N_G(Q_v)=G$ and $\overline{N}_G(Q_v) = G/Q_v$ for every $1\leq v\leq p+3$; and
\item $\overline{N}_G(Q_v)$ is a Frobenius group with Frobenius complement $H\cong HQ_v/Q_v$ and Frobenius kernel $F/Q_v$ for every $1\leq v\leq p+2$.
\end{enumerate}
\end{lem}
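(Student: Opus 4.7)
The proof splits cleanly into the two parts (a) and (b). My plan is as follows.

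For assertion (a), the key observation is that every $Q_v$ sits inside $F$. Since $F$ is abelian, we automatically have $F\leq N_G(Q_v)$, so it suffices to show $H\leq N_G(Q_v)$. This is where we use the minimal fusion hypothesis recalled in the paragraph above \cref{nota:MinimalFusionCase}: by \cref{prop:FrobeniusGroups_max_fusion_and_no_fusion}(b), for each $h\in H\setminus\{1\}$ there exists an integer $a(h)$ with $1<a(h)\leq p-1$ such that $hfh^{-1}=f^{a(h)}$ for \emph{every} $f\in F$. Therefore, for any $f\in Q_v$, we have $hfh^{-1}=f^{a(h)}\in \langle f\rangle\leq Q_v$, so $Q_v$ is $H$-invariant, and consequently normal in $G=FH$. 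The quotient identity $\overline{N}_G(Q_v)=G/Q_v$ is then immediate from the definition.

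For assertion (b), fix $1\leq v\leq p+2$, so that $Q_v$ is a proper subgroup of $F$. I would first verify that $G/Q_v$ is the (internal) semi-direct product $(F/Q_v)\rtimes (HQ_v/Q_v)$. The equality $G/Q_v=(F/Q_v)(HQ_v/Q_v)$ is immediate from $G=FH$. For the trivial intersection, I would argue that $F\cap HQ_v=Q_v$: if $f=hq$ with $f\in F$, $h\in H$, $q\in Q_v\leq F$, then $h=fq^{-1}\in F\cap H=\{1\}$. In particular $H\cap Q_v=\{1\}$, so $HQ_v/Q_v\cong H$.

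It then remains to check the Frobenius condition, namely that $HQ_v/Q_v$ acts fixed-point-freely on $(F/Q_v)\setminus\{\bar 1\}$. Given $h\in H\setminus\{1\}$ and $f\in F\setminus Q_v$, the equation $hfh^{-1}Q_v=fQ_v$ is equivalent to $f^{a(h)-1}\in Q_v$. Since $1<a(h)\leq p-1$, the integer $a(h)-1$ is nonzero modulo $p$, so $f^{a(h)-1}$ generates the cyclic group $\langle f\rangle$ of order $p$. Hence $f^{a(h)-1}\in Q_v$ would force $\langle f\rangle\leq Q_v$; as $\langle f\rangle$ and $Q_v$ both have order $p$, this gives $\langle f\rangle=Q_v$, contradicting $f\notin Q_v$. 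This proves the fixed-point-freeness, and hence the Frobenius structure of $G/Q_v$.

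There is no serious obstacle here; the only step that uses a non-tautological input is the fixed-point-free action verification, for which the crucial ingredient is the ``raising to a power'' description of the $H$-action on $F$ provided by \cref{prop:FrobeniusGroups_max_fusion_and_no_fusion}(b). Everything else reduces to straightforward manipulations with semi-direct products.
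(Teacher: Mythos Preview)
Your argument is correct. For part (a) you take a slightly different route from the paper: the paper observes directly that since no two distinct subgroups of order $p$ are $G$-conjugate, the $G$-conjugacy orbit of each $Q_v$ is a singleton, whence $N_G(Q_v)=G$; you instead invoke the equivalent characterisation from \cref{prop:FrobeniusGroups_max_fusion_and_no_fusion}(b) (the ``raising to a power'' description of the $H$-action) to show $H$ normalises each $Q_v$. Both are immediate, and your version has the advantage that it feeds directly into the fixed-point-freeness verification in (b).

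For part (b), the paper simply says the Frobenius structure ``follows from the definition'', whereas you spell out the semi-direct product decomposition and the fixed-point-free action in full. Your verification is fine. One cosmetic point: in the final step you write ``as $\langle f\rangle$ and $Q_v$ both have order $p$'', which fails for $v=1$ where $Q_1=\{1\}$. The contradiction is still there (you have already shown $\langle f\rangle\leq Q_v$, which is impossible since $|\langle f\rangle|=p>|Q_v|$ when $v=1$), so you could simply replace that clause by ``contradicting $f\notin Q_v$, since $f\in\langle f\rangle\leq Q_v$''. Alternatively, the case $v=1$ is just the original group $G$, which is Frobenius by hypothesis.
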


\begin{proof}
\begin{enumerate}[label=\rm(\alph{enumi}),itemsep=1ex]
\item As no two distinct $p$-subgroups of~$G$ of order $p$ are $G$-conjugate, it follows that $N_G(Q_v)=G$ for each $1\leq v\leq p+3$. The second claim is then immediate.
\item For each $1\leq v\leq p+2$, clearly $Q_v\propernormalsubgroup F$ and $Q_v\propernormalsubgroup G$  by (a). As $G$ is a Frobenius group with respect to $H$,  the assertion follows from the definition.\qedhere
\end{enumerate}
\end{proof}

\begin{nota}\label{nota:Irr_N_bar_minimal_fusion}
    Following \cref{prop:decompositionMatrix_Frobenius_groups_all_cases_relevant_for_us}, given $v\in\{2,\ldots, p+2\}$, we let $\Irr(\overline{N}_G(Q_v))=$ $\{\theta_1, \ldots , \theta_{m+\frac{p-1}{m}}\}$ where for each $1\leq a\leq m$ we set~$\theta_a := \Inf_{(G/Q_v)/(F/Q_v)}^{G/Q_v}(\xi_a)$  with  $\xi_a\in\Irr(H)$  as defined in \cref{nota:ordinary_ct_of_cyclic_group}, and 
$$\{\theta_{m+1},\ldots, \theta_{m+\frac{p-1}{m}}\}  = \{ \Ind_{F/Q_v}^{G/Q_v}(\nu)\ |\ \nu\in V\}$$
where $V$ is a set of representatives for the conjugation action of $G/Q_v$ on $\Irr(F/Q_v)\setminus \{1_{F/Q_v}\}$\,.
\end{nota}

\begin{lem}\label{prop:decompositionMatrix_G_NO_FUSION}
\begin{enumerate}[label=\rm(\alph{enumi}),itemsep=1ex]
    \item Setting $\varphi_a:=\chi_a^{\circ}$ for each $1\leq a\leq m$, we have  $\IBr_p(G)=\{\varphi_1, \ldots, \varphi_{m}\}$ and $\decmat(G)$ is as given in \cref{table:decompositionMatrix_G_NO_FUSION}.
    
    \item Setting $\psi_a:=\theta_a^{\circ}$ for each $1\leq a\leq m$, we have  $\IBr_p(\overline{N}_G(Q_v))=\{\psi_1, \ldots, \psi_{m}\}$ and $\decmat(\overline{N}_G(Q_v))$ is as given in \cref{table:decompositionMatrix_N_bar_NO_FUSION} for every  $2\leq v\leq p+2$\,.

\end{enumerate}
\end{lem}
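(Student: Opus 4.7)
The plan is to obtain both decomposition matrices as immediate consequences of \cref{prop:decompositionMatrix_Frobenius_groups_all_cases_relevant_for_us}, which already describes the decomposition matrix of any Frobenius group whose complement is cyclic and whose kernel is elementary abelian of prime power order $p^r$.

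For part (a), the group $G = F\rtimes H$ satisfies the hypotheses of \cref{prop:decompositionMatrix_Frobenius_groups_all_cases_relevant_for_us} with $r=2$. Part (b) of that proposition yields at once the list $\IBr_p(G)=\{\varphi_1,\ldots,\varphi_m\}$ together with the fact that $\varphi_a=\chi_a^\circ$ for $1\leq a\leq m$. The first $m$ rows of $\decmat(G)$ then form the identity block $I_m$, since the identities $\chi_a=\Inf_{G/F}^{G}(\xi_a)$ with $\xi_a\in\Lin(H)$ force $\chi_a^\circ=\varphi_a$; and the remaining $(p+1)\cdot a(m)=(p^2-1)/m$ rows, labelled by the characters $\chi_{m+b}$ induced from~$F$, each equal $(1,1,\ldots,1)$ by part (d) of the same proposition. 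This is precisely the shape of \cref{table:decompositionMatrix_G_NO_FUSION}.

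For part (b), the crucial intermediate fact is \cref{lem:Normalisers_MinimalFusionCase}(b), which identifies $\overline{N}_G(Q_v)=G/Q_v$, for $2\leq v\leq p+2$, as a Frobenius group with Frobenius complement $HQ_v/Q_v\cong H\cong C_m$ and Frobenius kernel $F/Q_v$. Since $Q_v$ has order $p$ and $F$ is elementary abelian of order $p^2$, the quotient $F/Q_v$ is cyclic of order~$p$. Hence \cref{prop:decompositionMatrix_Frobenius_groups_all_cases_relevant_for_us} applies once more, this time with $r=1$, giving $\IBr_p(\overline{N}_G(Q_v))=\{\psi_1,\ldots,\psi_m\}$ with $\psi_a=\theta_a^\circ$, and a decomposition matrix consisting of the identity block $I_m$ on top, followed by $(p-1)/m=a(m)$ all-ones rows, as displayed in \cref{table:decompositionMatrix_N_bar_NO_FUSION}.

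The whole argument thus reduces to unwinding definitions and invoking a single previously established result, so there is no real obstacle. The only points worth double-checking are that the labellings of $\Irr(G)$ in \cref{nota:MinimalFusionCase} and of $\Irr(\overline{N}_G(Q_v))$ in \cref{nota:Irr_N_bar_minimal_fusion} are set up to exactly match those of \cref{prop:decompositionMatrix_Frobenius_groups_all_cases_relevant_for_us}, which is the case by construction, so the rows and columns of both decomposition matrices automatically appear in the claimed order.
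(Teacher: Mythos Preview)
Your proposal is correct and follows exactly the same approach as the paper, which simply records that both (a) and (b) are immediate from \cref{prop:decompositionMatrix_Frobenius_groups_all_cases_relevant_for_us}(d). Your version merely unpacks this a little further by making explicit the appeal to \cref{lem:Normalisers_MinimalFusionCase}(b) and the values $r=2$ and $r=1$, which is entirely in keeping with the paper's argument.
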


\par\bigskip\bigskip
\makebox[0.95\textwidth][l]{%
  \parbox{0.48\textwidth}{%
    \centering
    \scalebox{0.85}{ 
      \begin{NiceTabular}[nullify-dots,xdots/shorten=4pt]{c|ccccccc}
      \hline
       & $\varphi_1$ & $\varphi_2$ & \Cdots & \Cdots & \Cdots & $\varphi_{m-1}$ & $\varphi_m$\\
      \hline
      $\chi_1$ & $1$ & $0$ & \Cdots & \Cdots & \Cdots  & $0$ & $0$ \\
      $\chi_2$ & $0$ & $1$ & \Ddots & & & \Vdots & \Vdots \\
      $\chi_3$ & $0$ & $0$ & \Ddots & \Ddots & & \Vdots & \Vdots \\
      $\Vdots$ & $\Vdots$ & $\Vdots$ & \Ddots & \Ddots & \Ddots & \Vdots & \Vdots \\
      $\Vdots$ & $\Vdots$ & $\Vdots$ &  & \Ddots & \Ddots & $0$ & $0$ \\
      $\Vdots$ & $\Vdots$ & $\Vdots$ &  &  & \Ddots & $1$ & $0$ \\
      $\chi_m$ & $0$ & $0$ & \Cdots & \Cdots & \Cdots & $0$ & $1$\\
      $\chi_{m+1}$ & $1$ & $1$ & \Cdots & \Cdots & \Cdots & $1$ & $1$\\
      $\Vdots$ & $\Vdots$ & \Vdots & & & & \Vdots & \Vdots\\
      $\chi_{m+(p+1)\cdot a(m)}$ & $1$ & $1$ & \Cdots & \Cdots & \Cdots & $1$ & $1$\\
      \hline
      \end{NiceTabular}
    }
    \captionof{table}{$p$-decomposition matrix of $G$.\\ \phantom{22}}
    \label{table:decompositionMatrix_G_NO_FUSION}
  }
  \hspace{0.2cm}
  \parbox{0.48\textwidth}{%
    \centering
    \scalebox{0.85}{
      \begin{NiceTabular}[nullify-dots,xdots/shorten=4pt]{c|ccccccc}
      \hline
       & $\psi_1$ & $\psi_2$ & \Cdots & \Cdots & \Cdots & $\psi_{m-1}$ & $\psi_m$\\
      \hline
      $\theta_1$ & $1$ & $0$ & \Cdots & \Cdots & \Cdots  & $0$ & $0$ \\
      $\theta_2$ & $0$ & $1$ & \Ddots & & & \Vdots & \Vdots \\
      $\theta_3$ & $0$ & $0$ & \Ddots & \Ddots & & \Vdots & \Vdots \\
      $\Vdots$ & $\Vdots$ & $\Vdots$ & \Ddots & \Ddots & \Ddots & \Vdots & \Vdots \\
      $\Vdots$ & $\Vdots$ & $\Vdots$ &  & \Ddots & \Ddots & $0$ & $0$ \\
      $\Vdots$ & $\Vdots$ & $\Vdots$ &  &  & \Ddots & $1$ & $0$ \\
      $\theta_m$ & $0$ & $0$ & \Cdots & \Cdots & \Cdots & $0$ & $1$\\
      $\theta_{m+1}$ & $1$ & $1$ & \Cdots & \Cdots & \Cdots & $1$ & $1$\\
      $\Vdots$ & $\Vdots$ & \Vdots & & & & \Vdots & \Vdots\\
      $\theta_{m+\frac{p-1}{m}}$ & $1$ & $1$ & \Cdots & \Cdots & \Cdots & $1$ & $1$\\
      \hline
      \end{NiceTabular}
    }
    \captionof{table}{$p$-decomposition matrix of $\overline{N}_G(Q_v)$\\($2\leq v\leq p+2$).}
    \label{table:decompositionMatrix_N_bar_NO_FUSION}
  }
}

\begin{proof}
Both (a) and (b) are immediate from \cref{prop:decompositionMatrix_Frobenius_groups_all_cases_relevant_for_us}(d).
\end{proof}

\begin{thm}\label{Thm_Triv_p_MetaFrob_no_fusion}
	Let $G$ be a metabelian Frobenius group with cyclic complement $H$ of order~$m$ dividing $p-1$ and Frobenius kernel $F\cong C_p\times C_p$ such that the number of $G$-conjugacy classes of subgroups of $G$ of order $p$ is precisely $p+1$. Then, the trivial source character table $\Triv_{p}(G)=[T_{i,v}]_{1\leq i,v\leq p+3}$ seen as a block matrix is as given in \cref{table:tsct_no_fusion}. 
    More precisely,  the labelling of the rows and columns and the entries are as described below.
    \begin{enumerate}[label={\rm(\alph*)}]
        \item For every  $1\leq i,v\leq p+3$ the columns of $T_{i,v}$  may be labelled by the elements of $H$. 
        \item 
        The ordinary characters of the trivial source modules are as follows:
            \begin{enumerate}[label={\rm(\arabic*)}]
                   \item
                   $\{\chi^{}_{\widehat{M}}\mid  M\in \TS(G;Q_1)\}=\{\chi + \sum\limits_{j=m+1}^{m+(p+1)\cdot a(m)} {\chi_j}\mid \chi\in \Lin(G)\}$;
                   \item 
                   $\{\chi^{}_{\widehat{M}}\mid  M\in \TS(G;Q_i)\}=\{\lambda + \sum\limits_{\substack{\chi\in\Irr(G)\setminus\Lin(G)\\ \Ker(\chi)=Q_i}} \chi\mid \lambda\in \Lin(G)\}$ for every $2\leq i\leq p+2$;
                   \item 
                   $\{\chi^{}_{\widehat{M}}\mid  M\in \TS(G;Q_{p+3})\}=\Lin(G)$. 
                \end{enumerate}
        As $\Lin(G)=\Inf_{H}^G(\Irr(H))$ we choose the labelling of the rows of $T_{i,v}$ $(1\leq i,v\leq p+3)$ to match that of $X(H)$. 
  \item  With the labelling of the rows and of the columns given in {\rm(a)} and {\rm(b)}, we have:
    \begin{enumerate}[label={\rm(\arabic*)}]
		\item $T_{i,v}={\mathbf{0}}$ for every $2\leq v<i\leq p+2$ and for every $1\leq i< v\leq p+3$;
        \item $T_{1,1}=X(H) + \left(\begin{smallmatrix}
		p^2-1 & 0 & \cdots & 0\\
		\vdots & \vdots & \ddots & \vdots\\
		p^2-1 & 0 & \cdots & 0
		\end{smallmatrix}\right)$;
		\item $T_{2,1}=T_{i,1}=T_{i,i}=X(H) + \left(\begin{smallmatrix}
		p-1 & 0 & \cdots & 0\\
		\vdots & \vdots & \ddots & \vdots\\
		p-1 & 0 & \cdots & 0 
		\end{smallmatrix}\right)$  for every $2\leq i\leq p+2$;
		\item $T_{p+3,1} = T_{p+3,v} = X(H)$ for every $2\leq v\leq p+3$.
	\end{enumerate}
    \end{enumerate}
\end{thm}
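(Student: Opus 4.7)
The plan is to mirror the strategy of \cref{Thm_Triv_p_AGL_1_p_squared}, exploiting a crucial simplification that is special to the minimal fusion setting: by \cref{lem:Normalisers_MinimalFusionCase}(a), every $p$-subgroup $Q_v$ is normal in $G$, so $N_G(Q_v)=G$, and the Green correspondence from $\TS(G;Q_v)$ to $\TS(\overline{N}_G(Q_v);\{1\})$ is realised by inflation from $G/Q_v$ to $G$, with no induction step. The column labelling in Part~(a) is then immediate: for $v\in\{1,p+3\}$, $\overline{N}_G(Q_v)\cong G$ or $H$ and \cref{prop:decompositionMatrix_Frobenius_groups_all_cases_relevant_for_us}(c) identifies the $p'$-classes with~$H$; for $2\le v\le p+2$, $G/Q_v$ is again a Frobenius group with complement $HQ_v/Q_v\cong H$ by \cref{lem:Normalisers_MinimalFusionCase}(b) and the same proposition applies.

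For Part (b), the three cases amount to reading off ordinary characters of PIMs in suitable quotients of~$G$. Case~(b)(1) is immediate from \cref{prop:decompositionMatrix_G_NO_FUSION}(a) since $\TS(G;Q_1)$ consists of the PIMs of $kG$, while case~(b)(3) is the content of \cref{lem:Simples_G_mod_Op(G)}(b). The substantial case is (b)(2): for $2\le i\le p+2$, \cref{Proposition1AbelianGroupsTSModules} gives $M=\Inf_{G/Q_i}^G(P)$ for a PIM $P$ of $k[G/Q_i]$, so $\chi^{}_{\widehat{M}}$ is the inflation of $\theta_a+\sum_{b=1}^{(p-1)/m}\theta_{m+b}$ via \cref{prop:decompositionMatrix_G_NO_FUSION}(b). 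The inflation of $\theta_a$ is a linear character $\lambda\in\Lin(G)$, and a direct computation with Frobenius reciprocity gives $\Inf_{G/Q_i}^G(\Ind_{F/Q_i}^{G/Q_i}(\nu_b))=\Ind_F^G(\widetilde{\nu}_b)$, where $\widetilde{\nu}_b:=\Inf_{F/Q_i}^F(\nu_b)$. The resulting character is irreducible and non-linear, with kernel containing $Q_i$. One then checks that equality of kernels holds by noting that for $\chi=\Ind_F^G(\nu)\in\Irr(G)\setminus\Lin(G)$ we have $\Ker(\chi)\cap F=\Ker(\nu)$ (a subgroup of order $p$, normal in $G$ in the minimal fusion setting by \cref{prop:FrobeniusGroups_max_fusion_and_no_fusion}(b)), and that $\chi$ vanishes off $F$; a counting argument $(p+1)\cdot(p-1)/m=(p^2-1)/m$ confirms that inflation provides a bijection onto $\{\chi\in\Irr(G)\setminus\Lin(G):\Ker(\chi)=Q_i\}$.

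For Part (c), statement (c)(1) is \cref{lem:PropTrivpG}(a) since $Q_v\not\le_G Q_i$ in the stated ranges. The diagonal blocks $T_{i,i}$ for $1\le i\le p+2$ are obtained from \cref{lem:PropTrivpG}(b) via the formula $\decmat(\overline{N}_G(Q_i))^t\cdot X(\overline{N}_G(Q_i),p')$, in direct analogy with the proof of \cref{Thm_Triv_p_AGL_1_p_squared}(c)(2)(3): writing $X(\overline{N}_G(Q_i),p')$ as a block matrix with top block $X(H)$ and bottom block whose only non-zero column is the degrees column, the product equals $X(H)+E$, where the first column of $E$ is $(p+1)\cdot a(m)\cdot m=p^2-1$ for $i=1$, and $a(m)\cdot m=p-1$ for $2\le i\le p+2$. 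The off-diagonal blocks $T_{i,1}$ ($2\le i\le p+2$) are computed via \cref{lem:PropTrivpG}(c) by evaluating the characters from~(b)(2) on $H$: the non-linear constituents vanish on $H\setminus\{1\}$ and contribute $m$ each at the identity, summing to $a(m)\cdot m=p-1$, which reproduces $T_{i,i}$. For the last row, $T_{p+3,p+3}=X(H)$ follows from \cref{lem:PropTrivpG}(b) since $\overline{N}_G(Q_{p+3})\cong H$ is a $p'$-group, and $T_{p+3,1}=X(H)$ from \cref{lem:PropTrivpG}(c) and (b)(3). For $2\le v\le p+2$ and $M\in\TS(G;Q_{p+3})$, one computes $M[Q_v]$ directly: since $M$ is one-dimensional and trivial on $F\supseteq Q_v$, we have $M^{Q_v}=M$ and $\tr_1^{Q_v}(m)=|Q_v|\cdot m=0$ in characteristic $p$, so $M[Q_v]\cong M$ as a $k[G/Q_v]$-module, and evaluation on $H$ recovers $X(H)$.

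The main obstacle is the character-theoretic bookkeeping in (b)(2): tracking inflations along $G\to G/Q_i$ and matching them with the correct subset of $\Irr(G)$ selected by the kernel condition $\Ker(\chi)=Q_i$. The counting identity $(p+1)\cdot(p-1)/m=(p^2-1)/m$ closes the loop across the $p+1$ classes $Q_2,\ldots,Q_{p+2}$, but stating the bijection cleanly requires the identity $\Inf_{G/Q_i}^G\circ\Ind_{F/Q_i}^{G/Q_i}=\Ind_F^G\circ\Inf_{F/Q_i}^F$ and a careful kernel analysis of induced characters of Frobenius groups. Once this identification is in place, all other pieces are essentially matrix computations that parallel those of the maximal fusion case.
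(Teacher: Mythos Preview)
Your proposal is correct and follows essentially the same route as the paper's proof: both exploit normality of all $Q_v$ to replace Green correspondence by inflation, read off PIM characters from the decomposition matrices of $G$ and $G/Q_i$, identify $\Inf_{G/Q_i}^G(\Irr(G/Q_i)\setminus\Lin(G/Q_i))$ with $\{\chi\in\Irr(G)\setminus\Lin(G):\Ker(\chi)=Q_i\}$ via a kernel analysis, and compute the blocks $T_{i,v}$ through the same matrix identities. The only substantive difference is in the last row: for $M\in\TS(G;Q_{p+3})$ and $2\le v\le p+2$ the paper quotes \cite[Proposition~5.10.4]{Linckelmann1} to obtain $M[Q_v]=\Res^G_{N_G(Q_v)}(M)$, whereas you compute the Brauer quotient directly from $M^{Q_v}=M$ and $\tr_1^{Q_v}=0$; this is a harmless and arguably more self-contained variant.
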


\renewcommand*{\arraystretch}{1.4}
\begin{center}
 \begin{tiny}
\begin{table}[ht]
\begin{adjustbox}{max width=\textwidth -5pt}
\begin{tabularx}{\textwidth}{ |Y||Y||Y||Y||c||c|}

	  \hline 
	
	$T_{1,1}$
	& $\bf{0}$
	& $\bf{0}$
	& $\bf{0}$
	& $\cdots$
	& $\bf{0}$
	\\ \hline \hline

	$T_{2,1}$
	& $T_{2,2}=T_{2,1}$
	& $\bf{0}$
	& $\bf{0}$
	& $\cdots$
	& $\bf{0}$
	\\ \hline \hline

	$T_{3,1}=T_{2,1}$
	& $\bf{0}$
	& $T_{3,3}=T_{2,1}$
	& $\bf{0}$
	& $\cdots$
	& $\bf{0}$
	\\ \hline\hline
	
	$\vdots$
	& $\vdots$
	& $\smash{\rotatebox{20}{$\ddots$}}$
	& $\smash{\rotatebox{20}{$\ddots$}}$
	& $\smash{\rotatebox{20}{$\ddots$}}$
	& $\vdots$
	\\ \hline\hline

	$T_{p+2,1}=T_{2,1}$
	& $\bf{0}$
	& $\cdots$
	& $\bf{0}$
	& $T_{p+2,p+2}=T_{2,1}$
	& $\bf{0}$
	\\ \hline\hline

	$T_{p+3,1}$
	& \mbox{$T_{p+3,2}=T_{p+3,1}$}
	& $\bf{\cdots}$
	& \, $\bf{\cdots}$\, 
	& \mbox{$T_{p+3,p+2}=T_{p+3,1}$}
	& \mbox{$T_{p+3,p+3}=T_{p+3,1}$}
	\\ \hline
\end{tabularx}
\end{adjustbox}
	\caption{Trivial source character table $\Triv_p(G)$, seen as a block matrix.}
    \label{table:tsct_no_fusion}
\end{table}
 \end{tiny}
\end{center}

\normalsize{}

\begin{proof}
(a) Let $1\leq v\leq p+3$. Since $N_G(Q_v)=G$ by \cref{lem:Normalisers_MinimalFusionCase}(a), we may assume that $H$ is a set of representatives of the $p$-regular conjugacy classes of $N_G(Q_v)$ by \cref{prop:decompositionMatrix_Frobenius_groups_all_cases_relevant_for_us}(c). Thus, the claim follows from \cref{conv:tsctbl}. 
\par
\noindent (b) As in the proof of the previous case, because $G/Q_{p+3}\cong H$, $|\Lin(G)|=|G/[G,G]|$ and $[G,G]=Q_{p+3}$ by \cref{lem:Frobenius_General_Properties}(d), 
by abuse of notation we may write $\Lin(G)=\Inf_{H}^{G}(\Irr(H))$, giving the last claim.
Next, notice that by Subsection~\ref{ssec:DefTSCT},  \cref{lem:Normalisers_MinimalFusionCase} and \cref{prop:decompositionMatrix_Frobenius_groups_all_cases_relevant_for_us}, we have $|\TS(G;Q_v)|=|H|=|\Lin(G)|$ for every $1\leq v\leq p+3$. 
\begin{enumerate}[label=\rm(\arabic*), leftmargin=8mm]
    \item The ordinary characters of the PIMs of $kG$ can be read off from the decomposition matrix in \cref{table:decompositionMatrix_G_NO_FUSION}, that is, for each $1\leq a \leq m$ we have
 $$\Phi_{\varphi_a}= \chi_a + \sum\limits_{j=m+1}^{m+(p+1)\cdot a(m)} {\chi_j}$$
 where $\chi_a$ runs through $\Lin(G)$.
    \item 
    Fix $2\leq i\leq p+2$.  We know from \cref{lem:Normalisers_MinimalFusionCase} that $N_G(Q_i)=G$ and $\overline{N}_G(Q_i)=G/Q_i$ is a Frobenius group with Frobenius complement~$H= Q_iH/Q_i$ and Frobenius kernel $F/Q_i$. 
    First, it follows from the bijections and the notation introduced in \cref{prop:decompositionMatrix_G_NO_FUSION} and in Subsection~\ref{ssec:DefTSCT} 
    that    
        \[
            \TS(G;Q_i)=\{\Inf_{G/Q_i}^{G} (P_{\psi})\mid \psi\in\IBr_p(G/Q_i)\}\,.
        \]
    Then, it follows from \cref{prop:decompositionMatrix_G_NO_FUSION}(b) that for every $1\leq u\leq m$ the ordinary characters of the PIMs $P_{\psi_u}$ of $k[G/Q_i]$ are given by~
        \[
            \Phi_{\psi_u}=\theta_u + \sum_{b=1}^{a(m)} {\theta_{m+b}}\,,
        \]
    where $\theta_u\in\Lin(G/Q_i)$ and $\theta_{m+b}\in \Irr(G/Q_i)\setminus\Lin(G/Q_i)$ with $\ker(\theta_{m+b})=Q_i$ for each {$1\leq b\leq a(m)$}\,.
    Next we observe that
        \[
            \Inf_{G/Q_i}^{G}\big(\Irr(G/Q_i)\setminus\Lin(G/Q_i)\big) =\{\chi\in \Irr(G)\setminus\Lin(G)\mid \Ker(\chi)=Q_i\}\,.
        \]
    Indeed, as the kernels of the non-trivial characters of $F$  are cyclic of order $p$ and normal in $G$, it follows that the kernel of any the non-linear irreducible character of $G$ is equal to the kernel of the character(s) it is induced from, hence also cyclic of order $p$.
    Therefore, we obtain that 
    \begin{equation*}
        \begin{split}
            \Inf_{G/Q_i}^{G}(\Phi_{\psi_u})=\Inf_{G/Q_i}^{G}(\theta_u + \sum_{b=1}^{a(m)} {\theta_{m+b}}) & =\Inf_{G/Q_i}^{G}(\theta_u)+\sum_{b=1}^{a(m)}\Inf_{G/Q_i}^{G}({\theta_{m+b}})\\
            &= \Inf_{G/Q_i}^{G}(\theta_u)+\sum_{\substack{\chi\in\Irr(G)\setminus\Lin(G)\\\ker(\chi)=Q_i}}\chi
        \end{split}
    \end{equation*}
    for each $1\leq u\leq m$. Clearly, the characters $\Inf_{G/Q_i}^{G}(\theta_u)$ run through $\Lin(G)$ when $u$ runs from $1$ to $m$, and the claim follows.
    \item The claim follows from \cref{lem:Simples_G_mod_Op(G)}(b), where equality holds by the argument above.
\end{enumerate} 

\noindent (c) We now compute the entries of $\Triv_p(G)$. 
\begin{enumerate}[label=\rm(\arabic*), leftmargin=8mm]
    \item The assertion is immediate from \cref{lem:PropTrivpG}(a).   
    \item By \cref{lem:PropTrivpG}(b), we have $T_{1,1}= \Phi_p(G)$. 
    It is clear from \cref{prop:decompositionMatrix_Frobenius_groups_all_cases_relevant_for_us} and Part (b)(1) that  the degree of the characters $\Phi_{\varphi_a}$ ($1\leq a\leq m$) is $1+p^2-1$. Therefore, it is now only left to prove that $\chi_j(y)=0$ for all $m+1\leq j\leq m+(p+1)\cdot a(m)$ and for all~$y\in H\setminus\{1\}$. But this is clear since for any $\nu\in\Irr(F)$, we have
	$$(\Ind_F^G(\nu))(y) = \frac{1}{|F|}\sum\limits_{\substack{g\in G\\ gyg^{-1}\in F}}^{} \nu (gyg^{-1})=0.$$
\bigskip
	\item \underline{The matrices $T_{i,i}\ (2\leq i\leq p+2)$}. Fix $i\in \{2, \ldots , p+2\}$.
    By \cref{lem:PropTrivpG}(b), we have $T_{i,i}= \Phi_p(\overline{N}_G(Q_i))$. 
    By \cref{lem:Normalisers_MinimalFusionCase} the group $\overline{N}_G(Q_i)=G/Q_i$ is a Frobenius group with Frobenius complement~$H$ and Frobenius kernel $F/Q_i$. The ordinary characters of the PIMs of $k\overline{N}_G(Q_i)$ can be read off from \cref{table:decompositionMatrix_N_bar_NO_FUSION}, namely
    $$\Phi_{\psi_a}=\theta_a + \sum_{j=1}^{\frac{p-1}{m}} {\theta_{m+j}}$$
for each $1\leq a\leq m$.
As any element of  $\Irr(G/Q_i)\setminus\Lin(G/Q_i)$ is induced from a linear character of $F/Q_i$, its degree is $[G/Q_i : F/Q_i]\cdot 1 = [G:F]=|H|=m$. Therefore, $\textup{deg}(\Phi_{\psi_a})=1+\frac{p-1}{m}\cdot m = 1+p-1=p$ for every $1\leq a\leq m$. As in (c)(2), using the formula for induced characters, we see that all non-linear constituents of $\Phi_{\psi_a}$ evaluate to $0$ at all $g\in H\setminus \{1\}$. The claim follows, as all the linear characters of $G/Q_i$ are precisely the inflations of the characters in $\Irr(H)$. This proves that 
$$T_{i,i}=X(H) + \left(\begin{smallmatrix}
		p-1 & 0 & \cdots & 0\\
		\vdots & \vdots & \ddots & \vdots\\
		p-1 & 0 & \cdots & 0 
		\end{smallmatrix}\right)\,.$$ 
  \bigskip

\noindent \underline{The matrices $T_{i,1}\ (2\leq i\leq p+2)$}. Fix $i\in \{2, \ldots , p+2\}$. By \cref{lem:PropTrivpG}(c), we have
 $$T_{i,1}=\big( \chi^{}_{\widehat{M}}(s)\big)_{M\in \TS(G;Q_{i}), s\in [G]_{p^\prime} }.$$
For any  $M\in TS(G;Q_i)$, by the bijections in Subsection \ref{ssec:DefTSCT}, we have $$\chi_{\widehat{M}}=\Ind_{N_G(Q_i)}^{G}\Inf_{N_G(Q_i)/Q_i}^{N_G(Q_i)} (\Phi_{\psi_a}) = \Ind_{G}^{G}\Inf_{G/Q_i}^{G} (\Phi_{\psi_a}) = \Inf_{G/Q_i}^{G} (\Phi_{\psi_a})$$
for a unique $a\in\{1,\ldots, m\}$. Since we set in (a) that $[G]_{p^\prime}=H$, it follows  immediately that $T_{i,1}=T_{i,i}$. Moreover, we have $T_{i,i}=T_{2,2}=T_{2,1}$.
\bigskip
	\item Fix $v\in \{1, \ldots , p+3\}$. By definition, we have 
$$T_{p+3,v} = \big[ \tau_{Q_{v},s}^{G}([M])\big]_{M\in \TS(G;Q_{p+3}), s\in [\overline{N}_G(Q_v)]_{p^\prime} }\,.$$
So, let $M\in\TS(G;Q_{p+3})$ and let $t\in [N_G(Q_v)]_{p^\prime}$. Recall that by \cref{lem:Normalisers_MinimalFusionCase}(a) we have $N_G(Q_v)=G$ and by \cref{prop:decompositionMatrix_Frobenius_groups_all_cases_relevant_for_us}(c) we may choose $[N_G(Q_v)]_{p^\prime} = H$. By definition, 
$$\tau_{Q_v, t}^G([M]) = \varphi_{M[Q_v]}(t).$$
Since $Q_v\leq Q_{p+3}=F \unlhd G$, it follows from  \cite[Proposition 5.10.4]{Linckelmann1} that $M[Q_v]=\Res^G_{N_G(Q_v)}(M)$. 
Hence, $\varphi_{M[Q_v]}(t) = \chi_{\widehat{M}}(t)$ by \cref{lem:Simples_G_mod_Op(G)}(b), and moreover $T_{p+3,1} = T_{p+3,v} = X(H)$. 
	\end{enumerate}
\end{proof}

\begin{exmp}
Let $G$ be the Frobenius group $(C_5\times C_5)\rtimes C_4$ of order $100$ with minimal fusion pattern, i.e. $G$ has precisely $6$ distinct conjugacy classes of subgroups of order $5$. It follows that we have $8$ conjugacy classes of $5$-subgroups of $G$, namely:
$$Q_1=\{1\},\quad Q_2\cong Q_3\cong Q_4\cong Q_5\cong Q_6\cong Q_7 \cong C_5,\quad Q_8\cong C_5\times C_5.$$
Notice that $G$ is isomorphic to the group labelled by\ [ 100, 11 ]\ in GAP's SmallGroups library, see~\cite{gap}. The ordinary character table of $G$ is as given in \cref{Ordinary_ct_FrobGroup_C5_C5_C4}, where 
$\zeta_4 := \exp(\frac{2\pi i}{4})$.
%$\zeta_4$ denotes a primitive $4$-th root of unity.
\enlargethispage{15mm}
\begin{center}
\begin{table}[h]
\scalebox{0.84}{
\begin{tabular}{@{}l@{}l@{}l@{}}
\hline
\(\begin{array}{|l|cccccccccc|}
  & 1a & 4a & 2a & 4b & 5a & 5b & 5c & 5d & 5e & 5f\\ \hline
\chi_{1} & 1 & 1 & 1 & 1 & 1 & 1 & 1 & 1 & 1 & 1\\
\chi_{2} & 1 & \zeta_4 & -1 & -\zeta_4 & 1 & 1  & 1 & 1 & 1 & 1\\
\chi_{3} & 1 & -1 & 1 & -1 & 1 & 1  & 1 & 1 & 1 & 1\\
\chi_{4} & 1 & -\zeta_4 & -1 & \zeta_4 & 1 & 1  & 1 & 1 & 1 & 1\\
\chi_{5} & 4 & 0 & 0 & 0 & 4 & -1  & -1 & -1 & -1 & -1\\
\chi_{6} & 4 & 0 & 0 & 0 & -1 & 4  & -1 & -1 & -1 & -1\\
\chi_{7} & 4 & 0 & 0 & 0 & -1 & -1 & 4 & -1 & -1 & -1\\
\chi_{8} & 4 & 0 & 0 & 0 & -1 & -1  & -1 & 4 & -1 & -1\\
\chi_{9} & 4 & 0 & 0 & 0 & -1 & -1 & -1 & -1 & 4 & -1\\
\chi_{10} & 4 & 0 & 0 & 0 & -1 & -1 & -1 & -1 & -1 & 4\\
\hline
\end{array}\)\\
\end{tabular}
}
\title{}
\caption{Ordinary character table of $(C_5\times C_5)\rtimes C_4$.}
\label{Ordinary_ct_FrobGroup_C5_C5_C4}
\end{table}
\end{center}
The trivial source character table $\Triv_5(G)$ is as given in \cref{TSCT_FrobGrp_100}. Following our conventions, we label the columns of $\Triv_5(G)$ with $5^\prime$-elements in $N_G(Q_v)$ instead of $\overline{N}_G(Q_v)$\ $(1\leq v\leq 8)$. 

\begin{landscape}
\vspace*{\fill}
\begin{center}
\begin{table}[h]
\resizebox{20cm}{!}{
\begin{tabular}{@{}l@{}l@{}l@{}l@{}l@{}l@{}l@{}l@{}l@{}l@{}l@{}l@{}l@{}l@{}l@{}l@{}l@{}l@{}l@{}l@{}}
\(\begin{array}{|l|cccc|cccc|cccc|cccc|cccc|cccc|cccc|cccc|}
\hline
\textup{Normalisers}\ N_v & \multicolumn{4}{c|}{N_{1}\cong (C_5\times C_5)\rtimes C_4} & \multicolumn{4}{c|}{N_{2}\cong (C_5\times C_5)\rtimes C_4} & \multicolumn{4}{c|}{N_{3}\cong (C_5\times C_5)\rtimes C_4} & \multicolumn{4}{c|}{N_{4}\cong (C_5\times C_5)\rtimes C_4} & \multicolumn{4}{c|}{N_{5}\cong (C_5\times C_5)\rtimes C_4} & \multicolumn{4}{c|}{N_{6}\cong (C_5\times C_5)\rtimes C_4} & \multicolumn{4}{c|}{N_{7}\cong (C_5\times C_5)\rtimes C_4} & \multicolumn{4}{c|}{N_{8}\cong (C_5\times C_5)\rtimes C_4}\\ \hline
\textup{Representatives}\ n_j\ \in\ N_i & 1a & 4a & 2a & 4b & 1a & 4a & 2a & 4b & 1a & 4a & 2a & 4b & 1a & 4a & 2a & 4b & 1a & 4a & 2a & 4b & 1a & 4a & 2a & 4b & 1a & 4a & 2a & 4b & 1a & 4a & 2a & 4b\\ \hline
\chi_{1} + \chi_{5} + \chi_{6} + \chi_{7} + \chi_{8} + \chi_{9} + \chi_{10} & 25 & 1 & 1 & 1 & 0 & 0 & 0 & 0 & 0 & 0 & 0 & 0 & 0 & 0 & 0 & 0 & 0 & 0 & 0 & 0 & 0 & 0 & 0 & 0 & 0 & 0 & 0 & 0 & 0 & 0 & 0 & 0\\
\chi_{2} + \chi_{5} + \chi_{6} + \chi_{7} + \chi_{8} + \chi_{9} + \chi_{10} & 25 & \zeta_4 & -1 & -\zeta_4 & 0 & 0 & 0 & 0 & 0 & 0 & 0 & 0 & 0 & 0 & 0 & 0 & 0 & 0 & 0 & 0 & 0 & 0 & 0 & 0 & 0 & 0 & 0 & 0 & 0 & 0 & 0 & 0\\
\chi_{3} + \chi_{5} + \chi_{6} + \chi_{7} + \chi_{8} + \chi_{9} + \chi_{10} & 25 & -1 & 1 & -1 & 0 & 0 & 0 & 0 & 0 & 0 & 0 & 0 & 0 & 0 & 0 & 0 & 0 & 0 & 0 & 0 & 0 & 0 & 0 & 0 & 0 & 0 & 0 & 0 & 0 & 0 & 0 & 0\\
\chi_{4} + \chi_{5} + \chi_{6} + \chi_{7} + \chi_{8} + \chi_{9} + \chi_{10} & 25 & -\zeta_4 & -1 & \zeta_4 & 0 & 0 & 0 & 0 & 0 & 0 & 0 & 0 & 0 & 0 & 0 & 0 & 0 & 0 & 0 & 0 & 0 & 0 & 0 & 0 & 0 & 0 & 0 & 0 & 0 & 0 & 0 & 0\\
\hline
\chi_{1} + \chi_{5} & 5 & 1 & 1 & 1 & 5 & 1 & 1 & 1 & 0 & 0 & 0 & 0 & 0 & 0 & 0 & 0 & 0 & 0 & 0 & 0 & 0 & 0 & 0 & 0 & 0 & 0 & 0 & 0 & 0 & 0 & 0 & 0\\
\chi_{2} + \chi_{5} & 5 & \zeta_4 & -1 & -\zeta_4 & 5 & \zeta_4 & -1 & -\zeta_4 & 0 & 0 & 0 & 0 & 0 & 0 & 0 & 0 & 0 & 0 & 0 & 0 & 0 & 0 & 0 & 0 & 0 & 0 & 0 & 0 & 0 & 0 & 0 & 0\\
\chi_{3} + \chi_{5} & 5 & -1 & 1 & -1 & 5 & -1 & 1 & -1 & 0 & 0 & 0 & 0 & 0 & 0 & 0 & 0 & 0 & 0 & 0 & 0 & 0 & 0 & 0 & 0 & 0 & 0 & 0 & 0 & 0 & 0 & 0 & 0\\
\chi_{4} + \chi_{5} & 5 & -\zeta_4 & -1 & \zeta_4 & 5 & -\zeta_4 & -1 & \zeta_4 & 0 & 0 & 0 & 0 & 0 & 0 & 0 & 0 & 0 & 0 & 0 & 0 & 0 & 0 & 0 & 0 & 0 & 0 & 0 & 0 & 0 & 0 & 0 & 0\\
 \hline
\chi_{1} + \chi_{6} & 5 & 1 & 1 & 1 & 0 & 0 & 0 & 0 & 5 & 1 & 1 & 1 & 0 & 0 & 0 & 0 & 0 & 0 & 0 & 0 & 0 & 0 & 0 & 0 & 0 & 0 & 0 & 0 & 0 & 0 & 0 & 0\\
\chi_{2} + \chi_{6} & 5 & \zeta_4 & -1 & -\zeta_4 & 0 & 0 & 0 & 0 & 5 & \zeta_4 & -1 & -\zeta_4 & 0 & 0 & 0 & 0 & 0 & 0 & 0 & 0 & 0 & 0 & 0 & 0 & 0 & 0 & 0 & 0 & 0 & 0 & 0 & 0\\
\chi_{3} + \chi_{6} & 5 & -1 & 1 & -1 & 0 & 0 & 0 & 0 & 5 & -1 & 1 & -1 & 0 & 0 & 0 & 0 & 0 & 0 & 0 & 0 & 0 & 0 & 0 & 0 & 0 & 0 & 0 & 0 & 0 & 0 & 0 & 0\\
\chi_{4} + \chi_{6} & 5 & -\zeta_4 & -1 & \zeta_4 & 0 & 0 & 0 & 0 & 5 & -\zeta_4 & -1 & \zeta_4 & 0 & 0 & 0 & 0 & 0 & 0 & 0 & 0 & 0 & 0 & 0 & 0 & 0 & 0 & 0 & 0 & 0 & 0 & 0 & 0\\
 \hline
\chi_{1} + \chi_{7} & 5 & 1 & 1 & 1 & 0 & 0 & 0 & 0 & 0 & 0 & 0 & 0 & 5 & 1 & 1 & 1 & 0 & 0 & 0 & 0 & 0 & 0 & 0 & 0 & 0 & 0 & 0 & 0 & 0 & 0 & 0 & 0\\
\chi_{2} + \chi_{7} & 5 & \zeta_4 & -1 & -\zeta_4 & 0 & 0 & 0 & 0 & 0 & 0 & 0 & 0 & 5 & \zeta_4 & -1 & -\zeta_4 & 0 & 0 & 0 & 0 & 0 & 0 & 0 & 0 & 0 & 0 & 0 & 0 & 0 & 0 & 0 & 0\\
\chi_{3} + \chi_{7} & 5 & -1 & 1 & -1 & 0 & 0 & 0 & 0 & 0 & 0 & 0 & 0 & 5 & -1 & 1 & -1 & 0 & 0 & 0 & 0 & 0 & 0 & 0 & 0 & 0 & 0 & 0 & 0 & 0 & 0 & 0 & 0\\
\chi_{4} + \chi_{7} & 5 & -\zeta_4 & -1 & \zeta_4 & 0 & 0 & 0 & 0 & 0 & 0 & 0 & 0 & 5 & -\zeta_4 & -1 & \zeta_4 & 0 & 0 & 0 & 0 & 0 & 0 & 0 & 0 & 0 & 0 & 0 & 0 & 0 & 0 & 0 & 0\\
 \hline
\chi_{1} + \chi_{8} & 5 & 1 & 1 & 1 & 0 & 0 & 0 & 0 & 0 & 0 & 0 & 0 & 0 & 0 & 0 & 0 & 5 & 1 & 1 & 1 & 0 & 0 & 0 & 0 & 0 & 0 & 0 & 0 & 0 & 0 & 0 & 0\\
\chi_{2} + \chi_{8} & 5 & \zeta_4 & -1 & -\zeta_4 & 0 & 0 & 0 & 0 & 0 & 0 & 0 & 0 & 0 & 0 & 0 & 0 & 5 & \zeta_4 & -1 & -\zeta_4 & 0 & 0 & 0 & 0 & 0 & 0 & 0 & 0 & 0 & 0 & 0 & 0\\
\chi_{3} + \chi_{8} & 5 & -1 & 1 & -1 & 0 & 0 & 0 & 0 & 0 & 0 & 0 & 0 & 0 & 0 & 0 & 0 & 5 & -1 & 1 & -1 & 0 & 0 & 0 & 0 & 0 & 0 & 0 & 0 & 0 & 0 & 0 & 0\\
\chi_{4} + \chi_{8} & 5 & -\zeta_4 & -1 & \zeta_4 & 0 & 0 & 0 & 0 & 0 & 0 & 0 & 0 & 0 & 0 & 0 & 0 & 5 & -\zeta_4 & -1 & \zeta_4 & 0 & 0 & 0 & 0 & 0 & 0 & 0 & 0 & 0 & 0 & 0 & 0\\
 \hline
\chi_{1} + \chi_{9} & 5 & 1 & 1 & 1 & 0 & 0 & 0 & 0 & 0 & 0 & 0 & 0 & 0 & 0 & 0 & 0 & 0 & 0 & 0 & 0 & 5 & 1 & 1 & 1 & 0 & 0 & 0 & 0 & 0 & 0 & 0 & 0\\
\chi_{2} + \chi_{9} & 5 & \zeta_4 & -1 & -\zeta_4 & 0 & 0 & 0 & 0 & 0 & 0 & 0 & 0 & 0 & 0 & 0 & 0 & 0 & 0 & 0 & 0 & 5 & \zeta_4 & -1 & -\zeta_4 & 0 & 0 & 0 & 0 & 0 & 0 & 0 & 0\\
\chi_{3} + \chi_{9} & 5 & -1 & 1 & -1 & 0 & 0 & 0 & 0 & 0 & 0 & 0 & 0 & 0 & 0 & 0 & 0 & 0 & 0 & 0 & 0 & 5 & -1 & 1 & -1 & 0 & 0 & 0 & 0 & 0 & 0 & 0 & 0\\
\chi_{4} + \chi_{9} & 5 & -\zeta_4 & -1 & \zeta_4 & 0 & 0 & 0 & 0 & 0 & 0 & 0 & 0 & 0 & 0 & 0 & 0 & 0 & 0 & 0 & 0 & 5 & -\zeta_4 & -1 & \zeta_4 & 0 & 0 & 0 & 0 & 0 & 0 & 0 & 0\\
 \hline
\chi_{1} + \chi_{10} & 5 & 1 & 1 & 1 & 0 & 0 & 0 & 0 & 0 & 0 & 0 & 0 & 0 & 0 & 0 & 0 & 0 & 0 & 0 & 0 & 0 & 0 & 0 & 0 & 5 & 1 & 1 & 1 & 0 & 0 & 0 & 0\\
\chi_{2} + \chi_{10} & 5 & \zeta_4 & -1 & -\zeta_4 & 0 & 0 & 0 & 0 & 0 & 0 & 0 & 0 & 0 & 0 & 0 & 0 & 0 & 0 & 0 & 0 & 0 & 0 & 0 & 0 & 5 & \zeta_4 & -1 & -\zeta_4 & 0 & 0 & 0 & 0\\
\chi_{3} + \chi_{10} & 5 & -1 & 1 & -1 & 0 & 0 & 0 & 0 & 0 & 0 & 0 & 0 & 0 & 0 & 0 & 0 & 0 & 0 & 0 & 0 & 0 & 0 & 0 & 0 & 5 & -1 & 1 & -1 & 0 & 0 & 0 & 0\\
\chi_{4} + \chi_{10} & 5 & -\zeta_4 & -1 & \zeta_4 & 0 & 0 & 0 & 0 & 0 & 0 & 0 & 0 & 0 & 0 & 0 & 0 & 0 & 0 & 0 & 0 & 0 & 0 & 0 & 0 & 5 & -\zeta_4 & -1 & \zeta_4 & 0 & 0 & 0 & 0\\
 \hline
\chi_{1} & 1 & 1 & 1 & 1 & 1 & 1 & 1 & 1 & 1 & 1 & 1 & 1 & 1 & 1 & 1 & 1 & 1 & 1 & 1 & 1 & 1 & 1 & 1 & 1 & 1 & 1 & 1 & 1 & 1 & 1 & 1 & 1\\
\chi_{2} & 1 & \zeta_4 & -1 & -\zeta_4 & 1 & \zeta_4 & -1 & -\zeta_4 & 1 & \zeta_4 & -1 & -\zeta_4 & 1 & \zeta_4 & -1 & -\zeta_4 & 1 & \zeta_4 & -1 & -\zeta_4 & 1 & \zeta_4 & -1 & -\zeta_4 & 1 & \zeta_4 & -1 & -\zeta_4 & 1 & \zeta_4 & -1 & -\zeta_4\\
\chi_{3} & 1 & -1 & 1 & -1 & 1 & -1 & 1 & -1 & 1 & -1 & 1 & -1 & 1 & -1 & 1 & -1 & 1 & -1 & 1 & -1 & 1 & -1 & 1 & -1 & 1 & -1 & 1 & -1 & 1 & -1 & 1 & -1\\
\chi_{4} & 1 & -\zeta_4 & -1 & \zeta_4 & 1 & -\zeta_4 & -1 & \zeta_4 & 1 & -\zeta_4 & -1 & \zeta_4 & 1 & -\zeta_4 & -1 & \zeta_4 & 1 & -\zeta_4 & -1 & \zeta_4 & 1 & -\zeta_4 & -1 & \zeta_4 & 1 & -\zeta_4 & -1 & \zeta_4 & 1 & -\zeta_4 & -1 & \zeta_4\\
\hline

\end{array}\)\end{tabular}
}
\caption{Trivial source character table of $(C_5\times C_5)\rtimes C_4$ at $p=5$.}
\label{TSCT_FrobGrp_100}
\title{}
\end{table}
\end{center}
\vspace*{\fill}
\end{landscape}
\end{exmp}

\newpage

\textbf{Acknowledgments.}
%We are grateful for detailed  comments on a preliminary version of this manuscript, in particular for an error in the proof of \cref{prop:FrobeniusGroups_max_fusion_and_no_fusion} that was mentioned to us.	
%We are also grateful to Richard Parker, who first mentioned to us the problem of calculating the species tables of the trivial source ring during a \emph{Research Cambridge Style} session of the Nikolaus Conference 2014. 
The research in this article started as part of Project A18 of the DFG Collaborative Research Centre SFB/TRR 195, by which the authors gratefully acknowledge financial support. They are also grateful for detailed comments received on a preliminary version of this manuscript, in particular for the finding of an error in the proof of Proposition \ref{prop:FrobeniusGroups_max_fusion_and_no_fusion}. 
Finally, they are grateful to Richard Parker, who first mentioned to them the problem of calculating the species tables of the trivial source ring during a \emph{Research Cambridge Style} session of the Nikolaus Conference 2014. This article is dedicated to his memory.	
	
%	\clearpage

	%--------------------------------------------------------------------------------------------------------
	%------------------------------------------ Bibliography--------------------------------------------
	%-------------------------------------------------------------------------------------------------------

	\nocite{}
	\bibliographystyle{aomalpha}
	\bibliography{biblio.bib}

@book{BensonBookI,
	author = {Benson, D. J.},
	edition = {Second},
	pages = {xii+246},
	publisher = {Cambridge University Press, Cambridge},
	series = {Cambridge Studies in Advanced Mathematics},
	title = {Representations and cohomology. {I}},
	volume = {30},
	year = {1998}
}

@article {BoltjeMonteiro,
    AUTHOR = {Boltje, Robert and Monteiro, Nariel},
     TITLE = {The ring of perfect {$p$}-permutation bimodules for blocks
              with cyclic defect groups},
   JOURNAL = {J. Algebra},
    VOLUME = {675},
      YEAR = {2025},
     PAGES = {1--22},
}

@book{BoucBisetBook,
 author = {Bouc, Serge},
 title = {Biset functors for finite groups},
 fseries = {Lecture Notes in Mathematics},
 volume = {1990},
 year = {2010},
 publisher = {Berlin: Springer},
}

@book{BBthesis,
        AUTHOR = {B{\"o}hmler, Bernhard},
        TITLE = {Trivial source character tables of small finite groups},
        PUBLISHER = {Dissertation, RPTU Kaiserslautern-Landau},
        YEAR = {2024},
        doi={10.26204/KLUEDO/8356}
}

@article {BreuerHethelyiHorvathKuelshammer,
    AUTHOR = {Breuer, T. and H\'ethelyi, L. and Horv\'ath, E. and
              K\"ulshammer, B.},
     TITLE = {On {F}robenius graphs of diameter 3 for finite groups},
   JOURNAL = {J. Algebra},
    VOLUME = {666},
      YEAR = {2025},
     PAGES = {507--529},
}

@book{DatabaseTSCTs,
        AUTHOR = {B{\"o}hmler, Bernhard and Farrell, Niamh and Lassueur, Caroline and  Patil, Jaikrishna},
        TITLE = {Database of trivial source character tables, Preliminary Version},
        URL={https://representationtables.github.io/trivialsource/},
        YEAR={2024}
}

@article{BBCLNF,
        AUTHOR = {B{\"o}hmler, Bernhard and Farrell, Niamh and Lassueur, Caroline},
        TITLE = {Trivial source character tables of {${\mathrm{SL}}_2(q)$}},
        JOURNAL = {J. Algebra},
        FJOURNAL = {Journal of Algebra},
        VOLUME = {598},
        YEAR = {2022},
        PAGES = {308--350}
}

@article{FarLas23,
        AUTHOR = {Farrell, Niamh and Lassueur, Caroline},
        TITLE = {Trivial source character tables of {${\mathrm{SL}}_2(q)$}, {P}art {II} },
        JOURNAL = {Proc. Edinburgh Math. Soc.},
        VOLUME = {66},
        YEAR = {2023},
        PAGES = {689--709}
}

@article{BH98,
    AUTHOR = {Brown, Ron and Harrison, David K.},
     TITLE = {Abelian {F}robenius kernels and modules over number rings},
   JOURNAL = {J. Pure Appl. Algebra},
  FJOURNAL = {Journal of Pure and Applied Algebra},
    VOLUME = {126},
      YEAR = {1998},
     PAGES = {51--86}
}

@book{CurtisReinerMethods1,
    AUTHOR = {Curtis, Charles W. and Reiner, Irving},
     TITLE = {Methods of representation theory. {V}ol. {I}},
    OPTSERIES = {Wiley Classics Library},
      OPTNOTE = {With applications to finite groups and orders,
              Reprint of the 1981 original,
              A Wiley-Interscience Publication},
    PUBLISHER = {John Wiley \& Sons, Inc., New York},
      YEAR = {1990},
     PAGES = {xxiv+819}
}

@book{HuppertCharacterTheory,
    AUTHOR = {Huppert, Bertram},
     TITLE = {Character theory of finite groups},
    SERIES = {De Gruyter Expositions in Mathematics},
    VOLUME = {25},
 PUBLISHER = {Walter de Gruyter \& Co., Berlin},
      YEAR = {1998},
     PAGES = {vi+618},
      ISBN = {3-11-015421-8}
}

@book{JacobsonBasicAlgebraII,
  title={Basic Algebra II: Second Edition},
  author={Jacobson, N.},
  series={Dover Books on Mathematics},
  year={2012},
  publisher={Dover Publications}
}

@article{LASpPermSuryey,
    AUTHOR = {Lassueur, Caroline},
     TITLE = {A tour of {$p$}-permutation modules and related classes of
              modules},
   JOURNAL = {Jahresber. Dtsch. Math.-Ver.},
  FJOURNAL = {Jahresbericht der Deutschen Mathematiker-Vereinigung},
    VOLUME = {125},
      YEAR = {2023},
     PAGES = {137--189},
      ISSN = {0012-0456,1869-7135}
}

@book{Linckelmann1,
    AUTHOR = {Linckelmann, Markus},
     TITLE = {The block theory of finite group algebras. {V}ol. {I}},
    SERIES = {London Mathematical Society Student Texts},
    VOLUME = {91},
 PUBLISHER = {Cambridge University Press, Cambridge},
      YEAR = {2018},
     PAGES = {x+515},
   MRCLASS = {20C20 (16-02 16S34 16T05 20-02)},
MRREVIEWER = {Burkhard K\"{u}lshammer}
}

@BOOK{LuxPah,
	AUTHOR = {Lux, Klaus and Pahlings, Herbert},
	YEAR = {2010},
	TITLE = {Representations of groups: A computational approach},
	ISBN = {978-0-521-76807-8},
	PUBLISHER = {Cambridge University Press},
	ADDRESS = {Cambridge}
}

@book{NagaoTsushima,
    AUTHOR = {Nagao, Hirosi and Tsushima, Yukio},
     TITLE = {Representations of finite groups},
      NOTE = {Translated from the Japanese},
 PUBLISHER = {Academic Press, Inc.}, 
ADDRESS={ Boston, MA},
      YEAR = {1989},
     PAGES = {xviii+424}
}

@book{Webb,
    AUTHOR = {Webb, Peter},
     TITLE = {A course in finite group representation theory},
    SERIES = {Cambridge Studies in Advanced Mathematics},
    VOLUME = {161},
 PUBLISHER = {Cambridge University Press, Cambridge},
      YEAR = {2016},
     PAGES = {xi+325},
      ISBN = {978-1-107-16239-6},
MRREVIEWER = {Stuart Martin}
}

@book{BensonBookOld,
	author = {Benson, D.},
	date-added = {2021-07-01 22:37:32 +0200},
	date-modified = {2021-07-01 22:37:32 +0200},
	pages = {xi+231},
	publisher = {Springer-Verlag, Berlin},
	series = {Lecture Notes in Mathematics},
	title = {Modular representation theory: new trends and methods},
	volume = {1081},
	year = {1984}
}

@article{BP,
	author = {Benson, D. J. and Parker, R. A.},
	date-added = {2021-07-01 22:37:32 +0200},
	date-modified = {2021-07-01 22:37:32 +0200},

	journal = {J. Algebra},
	pages = {290--331},
	title = {The {G}reen ring of a finite group},
	volume = {87},
	year = {1984}
}

@book{LinckBook,
	author = {Linckelmann, M.},
	date-added = {2019-03-21 11:39:32 +0100},
	date-modified = {2019-03-21 11:40:10 +0100},
	pages = {xi+510},
	publisher = {Cambridge University Press, Cambridge},
	series = {London Mathematical Society Student Texts},
	title = {The block theory of finite group algebras. {V}ol. {II}},
	volume = {92},
	year = {2018}
}

@manual{gap,
    key          = "GAP",
    organization = "The GAP~Group",
    title        = "{GAP -- Groups, Algorithms, and Programming,
                    Version 4.12.1}",
    year         = "2022",
    URL={https://www.gap-system.org}
}

@article{magma,
    AUTHOR = {Bosma, Wieb and Cannon, John and Playoust, Catherine},
     TITLE = {The {M}agma algebra system. {I}. {T}he user language},
      OPTNOTE = {Computational algebra and number theory (London, 1993)},
    JOURNAL = {J. Symbolic Comput.},
    FJOURNAL = {Journal of Symbolic Computation},
    VOLUME = {24},
      YEAR = {1997},
     PAGES = {235--265}
}
%	\bigskip
	
	%%%%%%%
	% non-BibTeX biblio:
	%%%%%%%

    %--------------------------------------------------------------------------------------------------------
	%------------------------------------------ Appendix --------------------------------------------
	%-------------------------------------------------------------------------------------------------------

     %---------------------------------------%
%-------------- Appendix ------------------%
%---------------------------------------%

\appendix
\section{$\Triv_p(\AGL_1(p^2))$ in table form}\label{app:tables}%

To support intuition, in this appendix, we give $\Triv_p(\AGL_1(p^2))$ described in \cref{Thm_Triv_p_AGL_1_p_squared} in \emph{table form}, where
%we  let 
$\zeta := \exp(\frac{2\pi i}{p^2-1})$.
%denote a primitive $(p^2-1)$-th root of unity in~$K$.

%%%%%%%%%%%%%%%%%%%%%%%%%%%%
%\begin{landscape}
%	\thispagestyle{plain}
	%%%%%%%%%%%%%%%%%%
	% T_{1,k}
	%%%%%%%%%%%%%%%%%%
 
%	  \vspace*{\fill}
	\renewcommand*{\arraystretch}{1.5}

\begin{small}
\begin{center}
\begin{adjustbox}{angle=90, max height=\dimexpr\textheight-5\baselineskip}
\begin{minipage}{\textheight}   % wichtig!

\centering

\begin{tabular}{c|c||c|c|c|c|c|c|c|c|}

	  \cline{2-10}
		
		&&

			$1$

		& 

			$h$

		&

			$\cdots $

		&

            \begin{tabular}{c}
			$h^{b(p+1)-1}$\\
                $(1\leq b\leq p-2)$
            \end{tabular}
            
		&
  
            \begin{tabular}{c}
			$h^{b(p+1)}$\\
                $(1\leq b\leq p-2)$
            \end{tabular}
            
		&
            \begin{tabular}{c}
			$h^{b(p+1)+1}$\\
                $(1\leq b\leq p-2)$
            \end{tabular}
		&

			$\cdots $

		&

			$h^{p^2-2}$

		\\ \hline \hline

		\multirow{4}{*}[0ex]{
			$T_{1,1} $}
		& $\chi_1 + \chi_{p^2}$ 
		& $p^2$
		& $1$ 
		& $\cdots$ 
		& $1$ 
		& $1$ 
		& $1$ 
		& $\cdots$
		& $1$
		\\ \cline{2-10}

		& $\chi_2 + \chi_{p^2}$ %$\St +  \Xi$
		& $p^2$
		& $\zeta$
		& $\cdots$ 
		& $\zeta^{b(p+1)-1}$ 
		& $\zeta^{b(p+1)}$ 
		& $\zeta^{b(p+1)+1}$ 
		& $\cdots$
		& $\zeta^{p^2-2}$
		\\ \cline{2-10}

		&\begin{tabular}{c}
			$\vdots$
			
		\end{tabular}
		& $\vdots$
		& $\vdots$
		& $\cdots$ 
		& $\vdots$ 
		& $\vdots$ 
		& $\vdots$ 
		& $\cdots$
		& $\vdots$
		\\ \cline{2-10}

		&\begin{tabular}{c}
			$\chi_{p^2-1} + \chi_{p^2}$
		\end{tabular}
		& $p^2$
		& $\zeta^{p^2-2}$
		& $\cdots$
		& $\zeta^{(b(p+1)-1)\cdot (p^2-2)}$ 
		& $\zeta^{b(p+1)\cdot (p^2-2)}$ 
		& $\zeta^{(b(p+1)+1)\cdot (p^2-2)}$ 
		& $\cdots$ 
		& $\zeta^{(p^2-2)^2}$ 
		\\ 
		\hline
		\hline

		\multirow{4}{*}[0ex]{$T_{2,1} $}
		& $\chi_{p^2} + \sum_{a=0}^{p} {\chi_{a(p-1)+1}}$
		& $p(p+1)$
		& $0$
		& $\cdots$
		& $0$ 
		& $p+1$ 
		& $0$ 
		& $\cdots$ 
		& $0$
		\\ \cline{2-10}

		& $\chi_{p^2} + \sum_{a=0}^{p} {\chi_{a(p-1)+2}}$
		& $p(p+1)$
		& $0$
		& $\cdots$
		& $0$ 
		& $\zeta^{b(p+1)}\cdot (p+1)$ 
		& $0$ 
		& $\cdots$ 
		& $0$
		\\ \cline{2-10}

		&\begin{tabular}{c}
			$\vdots$
			
		\end{tabular}
		& $\vdots$
		& $\vdots$
		& $\cdots$
		& $\vdots$
		& $\vdots$
		& $\vdots$
		& $\cdots$
		& $\vdots$
		\\ \cline{2-10}

		&\begin{tabular}{c}
			$\chi_{p^2} + \sum_{a=0}^{p} {\chi_{a(p-1)+(p-1)}}$
		\end{tabular}
		& $p(p+1)$
		& $0$
		& $\cdots$
		& $0$ 
		& $\zeta^{(p-2)\cdot b(p+1)}\cdot(p+1)$ 
		& $0$ 
		& $\cdots$ 
		& $0$
		\\ 
		\hline
		\hline

		\multirow{4}{*}[0ex]{
			$T_{3,1} $}
		& $\chi_1$
		& $1$
		& $1$
		& $\cdots$
		& $1$ 
		& $1$ 
		& $1$ 
		& $\cdots$ 
		& $1$
		\\ \cline{2-10}

		& $\chi_2$
		& $1$
		& $\zeta$ 
		& $\cdots$
		& $\zeta^{b(p+1)-1}$ 
		& $\zeta^{b(p+1)}$ 
		& $\zeta^{b(p+1)+1}$ 
		& $\cdots$ 
		& $\zeta^{p^2-2}$
		\\ \cline{2-10}

		&\begin{tabular}{c}
			$\vdots$
			
		\end{tabular}
		& $\vdots$
		& $\vdots$
		& $\cdots$
		& $\vdots$ 
		& $\vdots$
		& $\vdots$
		& $\cdots$ 
		& $\vdots$
		\\ \cline{2-10}

		&\begin{tabular}{c}
			$\chi_{p^2-1}$
		\end{tabular}
		& $1$
		& $\zeta^{p^2-2}$
		& $\cdots$
		& $\zeta^{(b(p+1)-1)\cdot (p^2-2)}$ 
		& $\zeta^{b(p+1)\cdot (p^2-2)}$ 
		& $\zeta^{(b(p+1)+1)\cdot (p^2-2)}$ 
		& $\cdots$ 
		& $\zeta^{(p^2-2)^2}$ 
		\\ 
		\hline
		
		\end{tabular}
%		}
%				\caption{$T_{i,1}$ for $1 \leq i \leq 3$ (first block column of $\Triv_p(\AGL_1(p^2)$).}	
		
%\end{tabular}
%\end{adjustbox}
\captionof{table}{$T_{i,1}$ for $1 \leq i \leq 3$ (first block column of $\Triv_p(\AGL_1(p^2)$).}
\label{table:TSCT_AGL_1_p_squared_T_i_1}

\end{minipage}
\end{adjustbox}
\end{center}

\end{small}

%	  \vspace*{\fill}
%\end{landscape}
\newpage

%\begin{landscape}

		 % \vspace*{\fill}
\begin{center}
%		\begin{minipage}{.6\textwidth} 
			%%%%%%%%%%%%%%%%%%
			% T_{i+1,j+1}
			%%%%%%%%%%%%%%%%%%
			\renewcommand*{\arraystretch}{1.5}
        \begin{table}
			\begin{tabular}{c|c||c|c|c|c|c|c|}
				 \cline{2-7}
				
				%&
    &
				&
				\begin{tabular}{c}
					\textbf{$1$}
				\end{tabular}
				& 
				\begin{tabular}{c}
					$h^{p+1}$
				\end{tabular}
				&
				\begin{tabular}{c}
					$h^{2\cdot (p+1)}$ 
				\end{tabular}
				&
					$\cdots$ 
				&
				\begin{tabular}{c}
					$h^{(p-2)\cdot (p+1)}$ 
				\end{tabular}
				\\ \hline \hline 			
			\multirow{4}{*}[0ex]{
			$T_{2,2} $}
   &
				$\chi_{p^2} + \sum_{a=0}^{p} {\chi_{a(p-1)+1}}$
				& $p$
				& $1$
				& $1$
				& $\cdots$
				& $1$
				\\ \cline{2-7}

	&			%& 
				$\chi_{p^2} + \sum_{a=0}^{p} {\chi_{a(p-1)+2}}$
				& $p$
				& $\zeta^{p+1}$
				& $\zeta^{2\cdot (p+1)}$
				& $\cdots$
				& $\zeta^{(p-2)\cdot (p+1)}$
				\\ \cline{2-7}

	&			%&
				\begin{tabular}{c}
					$\vdots$
				\end{tabular}
				& $\vdots$
				& $\vdots$
				& $\vdots$
				& $\cdots$
				& $\vdots$
				\\ \cline{2-7}			
	&			
				$\chi_{p^2} + \sum_{a=0}^{p} {\chi_{a(p-1)+p-1}}$
				& $p$
				& $\zeta^{(p-2)\cdot (p+1)}$
				& $\zeta^{2\cdot (p-2)\cdot (p+1)}$
				& $\cdots$
				& $\zeta^{{(p-2)}^2\cdot (p+1)}$
				
				\\ \hline \hline 
						\multirow{5}{*}[0ex]{
			$T_{3,2} $}
   &
				$\chi_1$
				& $1$
				& $1$
				& $1$
				& $\cdots$
				& $1$
				\\ \cline{2-7}	

    &
				$\chi_2$
				& $1$
				& $\zeta^{(p+1)}$
				& $\zeta^{2\cdot (p+1)}$
				& $\cdots$
				& $\zeta^{(p-2)\cdot (p+1)}$
				\\ \cline{2-7}	
	&			
				$\chi_3$
				& $1$
				& $\zeta^{2\cdot (p+1)}$
				& $\zeta^{4\cdot (p+1)}$
				& $\cdots$
				& $\zeta^{2\cdot (p-2)\cdot (p+1)}$
				\\ \cline{2-7}	
	&			
				$\vdots$
				& $\vdots$
				& $\vdots$
				& $\vdots$
				& $\cdots$
				& $\vdots$
				\\ \cline{2-7}
	&			
				$\chi_{p^2-1}$
				& $1$
				& $\zeta^{(p^2-2)\cdot (p+1)}$
				& $\zeta^{2\cdot (p^2-2)\cdot (p+1)}$
				& $\cdots$
				& $\zeta^{(p^2-2)\cdot (p-2)\cdot (p+1)}$
				
				\\ \hline 
    			\end{tabular}
				\caption{$T_{i,2}$ for $2 \leq  i \leq 3$.} 
				\label{table:TSCT_AGL_1_p_squared_T_2_2_and_T_3_2}				
\end{table}
\end{center}
	%	\end{minipage}\hfil	  \vspace*{\fill}
	%	\begin{minipage}[t]{.4\textwidth}
	%		~
	%	\end{minipage}\hfil
	%	\begin{minipage}{.55\textwidth}
			%%%%%%%%%%%%%%%%%%
			% T_{n+1,n+1}
			%%%%%%%%%%%%%%%%%%
  % \end{center}
   %     \vspace*{\fill}
   	\begin{center}
			\renewcommand*{\arraystretch}{1.5}
            \begin{table}[h]
			\begin{tabular}{c|c||c|}

				 \cline{2-3}
				
				&

				& 
				
					$h^j\ (0\leq j \leq p^2-2)$
				
				\\ \hline \hline 			
				
				$T_{3,3}$ & 
					$\chi_a\ (1\leq a \leq p^2-1)$
				
				& $\zeta^{(a-1)j}\ $\\ \hline
                \end{tabular}
				\caption{$T_{3,3}$.} 
				\label{table:TSCT_AGL_1_p_squared_T_3_3}
		\end{table}
	%	\end{minipage}
	\end{center}
 	  %\vspace*{\fill}
%\end{landscape}

	%--------------------------------------------------------------------------------------------------------
	%------------------------------------------ END                      --------------------------------------
	%--------------------------------------------------------------------------------------------------------

\vfill 

\end{document}